\theoremstyle{plain}
\newtheorem*{mainthm}{Main Theorem}
\newtheorem{theorem}{Theorem}[section]
\newtheorem{corollary}[theorem]{Corollary}
\newtheorem{lemma}[theorem]{Lemma}
\newtheorem{fact}[theorem]{Fact}
\newtheorem{proposition}[theorem]{Proposition}
\newtheorem{claim}{Claim}
\newtheorem*{claim*}{Claim}
\newenvironment{clmproof}[1][\proofname]{\proof[#1]}{\endproof}
\theoremstyle{definition}
\newtheorem{definition}[theorem]{Definition}
\theoremstyle{remark}
\newtheorem{remark}[theorem]{Remark}
\newtheorem{question}[theorem]{Question}
\DeclareMathOperator{\Aut}{Aut}
\DeclareMathOperator{\Age}{Age}
\DeclareMathOperator{\dom}{dom}
\DeclareMathOperator{\range}{range}
\DeclareMathOperator{\tp}{tp}
\DeclareMathOperator{\qftp}{qftp}
\DeclareMathOperator{\otp}{otp}
\newcommand{\meet}{\mathbin{\wedge}}
\newcommand{\cK}{\mathcal K}
\newcommand{\bT}{\mathbf T}
\newcommand{\isdefined}{{\downarrow}}
\newcommand{\isundefined}{{\uparrow}}
\newcommand{\bQ}{{\mathbf Q}}
\newcommand{\totalaut}{{\mathrm{t}}}
\renewcommand{\restriction}{\mathord{\upharpoonright}}
\newcommand{\xqed}[1]{%
	\leavevmode\unskip\penalty9999 \hbox{}\nobreak\hfill
	\quad\hbox{\ensuremath{#1}}}
\renewcommand*\author[1]{%
	\stepcounter{author}%
	\ifnum\c@author=1
	\gdef\@author{#1}%
	\else
	\xdef\@author{\unexpanded\expandafter{\@author\and#1}}%
	\fi
	\csgdef{author@\the\c@author}{#1}}
\newcommand*\grant[1]{%
	\xdef\@author{\unexpanded\expandafter{\@author\footnote{#1}}}
}
\newcommand*\email[1]{%
	\csgdef{email@\the\c@author}{#1}}
\newcommand*\orcid[1]{%
	\csgdef{orcid@\the\c@author}{#1}}
\newcommand*\address[1]{%
	\csgdef{address@\the\c@author}{#1}}
	\xdef\author@count{\the\c@author}%
\newcommand*\print@authors{%
	\ifnum\c@author>\author@count
	\else
	\print@author{\the\c@author}%
	\advance\c@author by 1
	\expandafter\print@authors
	\fi}
\newcommand*\print@author[1]{%
	\par\medskip
	\begin{tabular}{@{}l@{}}%
		\textsc{\csuse{author@#1}}\\
		\csuse{address@#1}\\
		\href{\csuse{orcid@#1}}{\includegraphics[height=\fontcharht\font`\B]{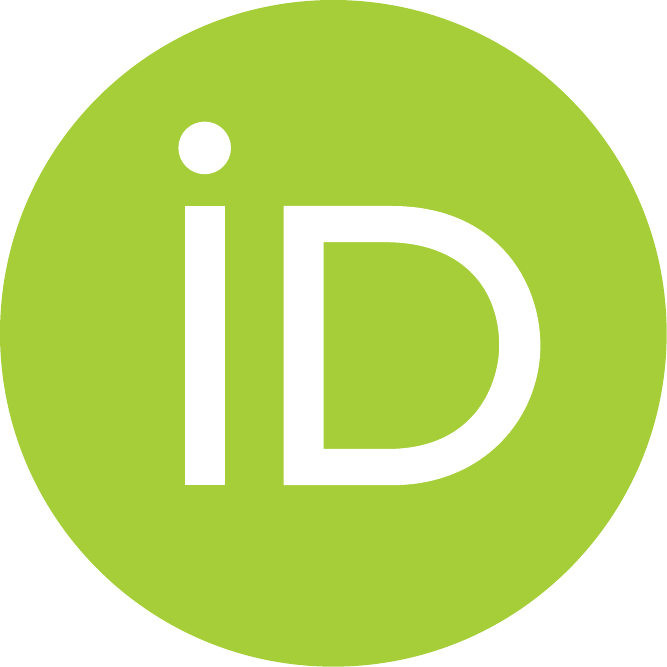} \csuse{orcid@#1}}\\
%		\textit{E-mail address}:
		\href{mailto:\csuse{email@#1}}{\csuse{email@#1}}
\end{tabular}}
\newcommand{\subjclass}[2][1991]{%
	\let\@oldtitle\@title%
	\gdef\@title{\@oldtitle\footnotetext{#1 \emph{Mathematics subject classification.} #2}}%
}
\newcommand{\keywords}[1]{%
	\let\@@oldtitle\@title%
	\gdef\@title{\@@oldtitle\footnotetext{\emph{Key words and phrases.} #1.}}%
}
\title{On the automorphism group of the universal homogeneous meet-tree}
\subjclass[2020]{03C15, 03E15, 06A12, 20E08, 54H11, 20B27}
\keywords{Fraïssé limit, generic automorphism, tree, Polish group, ample generics}
\author{Itay Kaplan}
\address{The Hebrew University of Jerusalem}
\email{kaplan@math.huji.ac.il}
\author{Tomasz Rzepecki}
\address{The Hebrew University of Jerusalem and Uniwersytet Wrocławski}
\email{tomasz.rzepecki@math.uni.wroc.pl}
\author{Daoud Siniora}
\address{}
\email{daoud.siniora@gmail.com}
\date{}
\begin{document}
	\maketitle
	
	\begin{abstract}
		We show that the countable universal homogeneous meet-tree has a generic automorphism, but it does not have a generic pair of automorphisms.
	\end{abstract}
	
	\section{Introduction}
	A countable structure $M$ has a \emph{generic automorphism} if its automorphism group has a comeagre conjugacy class. This is an important property which has certain implications on the automorphism group $G$ (for example: every element is a commutator, $G$ cannot be written as a proper free product with amalgamation and more; see \cite[Proposition 4.2.12]{MacSurvey}).
	
	A much stronger property is having \emph{ample generics}, which means that for all $n\in \mathbf N$, $ G $ has a generic tuple of length $ n $ or, in other words, $ G^{n} $ contains a comeagre orbit under the action of $ G $ by diagonal conjugation. Having ample generics implies in particular the small index property \cite[Theorem 5.2.5]{MacSurvey} --- which is desirable because, for instance, it implies that if $M$ is $\aleph_0$-categorical, then the automorphism group $ G $ (as a pure group) determines $ M $ up to bi-interpretability (among all countable $\aleph_0$-categorical structures; see e.g.\ \cite[Proposition 5.2.2]{MacSurvey}).
	
	This article started as an attempt to find a counterexample to a question of Dugald Macpherson (also appearing in the third author's thesis \cite[Chapter 7, Question 10]{Sin17}) which asks whether there is an ultrahomogeneous $ \aleph_0 $-categorical structure admitting ample generics with the strict order property (i.e., defining a partial order with infinite chains). Note that the answer to this question in general is negative (since \cite{Kwi12} shows that the countable atomless Boolean algebra has ample generics), so it seems that the question should be investigated under the NIP assumption. The obvious candidate, $ (\bQ,<) $, fails: by the works of Hodkinson (unpublished), Truss \cite{TrussNoGenericPair}, and the third author (who gave a new proof of this result) \cite[Lemma 6.1.1]{Sin17}, we know that $ \Aut(\bQ,<) $ has no generic pair of automorphisms (although $ \Aut(\bQ,<) $ does have a generic automorphism by \cite{Tru92,KT01}).
	
	Meet-trees are partial orders in which for every element $ a $, the set of elements below it is linearly ordered, equipped with a meet function (see Definition~\ref{definition:trees}). They are often used as a basis for interesting examples in the realm of NIP unstable theories (in the sense of Shelah's classification theory). For example, in \cite{946,975}, trees were the basis for a counterexample to an old conjecture of Shelah regarding the existence of indiscernibles in NIP. See also \cite[][Section 2.3.1]{SimGuide}. Finite meet-trees form a Fra\"{i}ss\'{e} class and thus there is a countable universal ultrahomogeneous and $ \aleph_0 $-categorical meet-tree $ \bT $ which we call the \emph{universal countable dense meet-tree}. We therefore thought that it would be interesting to understand the automorphism group of $ \bT $ with respect to generic automorphisms.
	
	Our main result is:
	\begin{mainthm}
		\phantomsection
		\label{mainthm}
		Let $ \bT $ be the universal countable dense meet-tree. Then its automorphism group $ G=\Aut(\bT) $ has a generic automorphism, but not a generic pair of automorphisms.
		
		(See Corollary~\ref{corollary:no_generic_pair} and Theorem~\ref{theorem:gen_automorphism_exists}.)\xqed{\lozenge}
	\end{mainthm}
	
	Note that the \hyperref[mainthm]{Main Theorem} implies that in particular, $\Aut(\bT)$ does not have ample generics --- which, as was mentioned, would imply having the small index property. However, it follows from \cite[Theorem 4.1]{DHM89} that $\Aut(\bT)$ (as well as the analogues for meet-trees of bounded arity, for which we also show the non-existence of a generic pair) does have the small index property.
	
	We use a criterion for having a generic automorphism that was established by Truss \cite[Theorem 2.1]{Tru92} and then improved to a characterisation independently by Ivanov \cite[Theorem 1.2]{Ivanov} and Kechris and Rosendal \cite[Theorem 6.2]{KR07}. Namely, to show that an ultrahomogeneous structure $ M $ with age $ \cK $ has a generic $ n $-tuple of automorphisms, one needs to prove that the class $ \cK^{n} $ of pairs $ (A,\bar{p}) $ such that $ A\in \cK $ and $ \bar{p} $ is an $n$-tuple of partial automorphisms of $ A $, has the joint embedding property (JEP) and a version of the amalgamation property which we call the existential amalgamation property (EAP)\footnote{In \cite{KR07} and \cite{Ivanov} this property was called the ``weak amalgamation property'' (WAP), and ``almost amalgamation property'', respectively, but we chose this term, coined in an unpublished work of Ben-Yaacov and Melleray, as it is more descriptive.} (see Fact~\ref{fact:amalgam_and_generics}).
	
	To show EAP in the case of finite meet-trees, we find a cofinal subclass of $ \cK^1 $ consisting of amalgamation bases (which is the aforementioned criterion from \cite{Tru92}). Starting with some member $ (A,p_A) $, we extend $ p_A $ to a partial automorphism $ p_B $ on a bigger domain $ B $ in such a way that $(B,p_B)$ is an amalgamation base in $ \cK^1 $. The idea in finding $ p_B $ is model-theoretic: instead of giving a precise description of $ p_B $,  we define it as being ``pseudo existentially closed'' in the sense that, roughly, any behaviour that happens in some extension of $ p_B $ is already witnessed in $ p_B $ itself (see Definition~\ref{definition:new_strong}). This method eliminates the need for a careful analysis of the interactions between multiple orbits of $p_B$ that we might otherwise need.
	
	Finally let us remark that while we did not check all the details, our methods seem to recover a proof of existence of generics for $(\mathbf Q,<)$ (see Remark~\ref{remark:works_for_linear}), and also provide a similar result for lexicographically ordered meet-trees (see Remark~\ref{remark:works_for_lex_trees}). They could also be helpful in finding generics in the case of meet-trees of bounded arity (see Remark~\ref{remark:works_for_bounded_arity}; see also Corollary~\ref{corollary:no_generic_pair} for the non-existence of generic pairs in this case).
	
	The paper is organized as follows. In Section~\ref{section:preliminaries} we recall Fraïssé classes and formally define EAP. We then discuss trees and give their basic properties. In Section~\ref{section:There is no generic pair} we prove that the automorphism group of the countable dense tree does not have a generic pair of automorphisms (in fact we give a more general statement, also about trees with bounded arity). In Section~\ref{section:determined partial automorphisms} we follow \cite{KT01} and discuss \emph{determined} finite partial automorphisms in an abstract context, giving a sufficient condition under which they form a class of amalgamation bases. In Section~\ref{section:Orbits in meet-trees} we discuss the possible orbits of partial automorphisms in meet-trees. Finally, in Section~\ref{section:Finding determined automorphisms} we prove that the class of determined partial automorphisms of meet-trees is indeed cofinal, thus proving EAP for the class of finite meet-trees.
	
	We end the introduction with some open questions.
	
	Our main results are similar to the ones in \cite{KT01,TrussNoGenericPair} regarding $ (\bQ,<) $ and the universal partial order (although it is not known if the universal partial order has a generic pair); more recently, a new preprint \cite{KM19} appeared giving similar results on two different structures (the universal ordered boron tree --- roughly speaking, a graph theoretic binary tree with a lexicographical order --- and the universal ordered poset). The latter's motivation came from a different yet related question of finding an ultrahomogeneous \emph{ordered} structure whose automorphism group has ample generics and is extremely amenable (in other words, by \cite{KPR}, its age has the Ramsey property). 
	
	Even more recently, a preprint \cite{Duc19} appeared, which analyses the homeomorphism groups of the so-called Ważewski dendrites, in particular showing the analogue of the \hyperref[mainthm]{Main Theorem} for the group of homeomorphisms of the Ważewski dendrite $D_{\{\infty \}}$ (into which $\Aut(\bT)$ naturally embeds as a meagre subgroup, namely as the stabiliser of an end point). We were not aware of this work. By private communication with Duchesne, it appears that though it is not written explicitly there, our result follows from the main theorem there and vice versa. Both papers use the aforementioned criterion for establishing the existence of a generic automorphism, but the methods of finding amalgamation bases are different. (Interestingly, from the point of view of \cite{Duc19}, meet-trees with bounded arity are analogues to dendrites with bounded branching. The latter do not have a dense conjugacy class according to \cite[Proposition 1.1]{Duc19}, while the former do, see also Remark \ref{remark:works_for_bounded_arity}.)
	
	\begin{question}
		What is the correct generalization of all these results?
	\end{question}
	
	Model theoretically, it seems appealing to consider the situation in general NIP $ \aleph_0 $-categorical structures with perhaps some further restrictions, as was done in \cite{PierreNIPomegacategoricalrank1}. This situation does not quite generalize ours since trees are not ``rank 1" in the sense defined there, and the universal partial order is not NIP.
	
	Another natural question that comes to mind is the following. In all the examples known to us, if EAP occurs for $ \cK^{1} $ then in fact there is a cofinal class of amalgamation bases (a property we denote by CAP, see Definition~\ref{definition:eap}).
	
	\begin{question}
		\label{question:EAP_not_CAP}
		Is it always the case that EAP is equivalent to CAP for the class $ \cK^{1} $ (where $ \cK $ is any Fra\"{i}ss\'{e} class)?
	\end{question}
	
	We are not sure what the situation is with trees of bounded arity; it may be a candidate for a counterexample --- see Remark \ref{remark:works_for_bounded_arity}.
	
	Since we know that the generic automorphism exists, it seems natural to ask how complicated it is.
	\begin{question}
		What can be said, model theoretically --- in terms of classification theory --- about the structure $ (\bT,\sigma) $, where $ \sigma $ is a generic automorphism? 
	\end{question}
	
	\subsubsection*{Acknowledgement}
	We would like to thank Bruno Duchesne for letting us know of his work mentioned above and for his other comments. We would also like to thank the anonymous referee for their helpful remarks.

	\section{Preliminaries}
	\label{section:preliminaries}
	\subsection*{Fraïssé classes and limits}
	We briefly recall the basic notions related to Fraïssé classes. See \cite[Chapter 6]{Hod} for more exposition. In contrast to \cite{Hod}, it will be convenient for us to consider classes of structures with partial functions. Formally, they can be thought of as relations (via their graphs), so this is only a superficial change.
	
	\begin{definition}
		Let $\cK$ be a class of first order structures (possibly with some partial functions), closed under isomorphisms.
		\begin{itemize}[nosep]
			\item
			We say that $\cK$ has the \emph{hereditary property} (\emph{HP}), if given any $A\in \cK$ and substructure $B\subseteq A$, we have $B\in \cK$.% such that $B\cong B'$.
			\item
			We say that $\cK$ has the \emph{joint embedding property} (\emph{JEP}) if given any $A_1,A_2\in \cK$, there is some $B\in \cK$ such that both $A_1$ and $A_2$ can be embedded in $B$.
			\item
			We say that $A\in \cK$ is an \emph{amalgamation base} (in $\cK$) if given any $B_1,B_2\in \cK$ and embeddings $i_j\colon A\to B_j$ for $j=1,2$, there is some $C\in \cK$ and embeddings $i'_j\colon B_j\to C$ for $j=1,2$ such that $i'_1\circ i_1=i'_2\circ i_2$.
			\item
			We say that $\cK$ has the \emph{amalgamation property} (\emph{AP}) if every $A\in \cK$ is an amalgamation base in $\cK$.
			\item
			We say that $\cK$ is \emph{uniformly locally finite} if given any $n\in \mathbf N$, there is an upper bound on the size of an $n$-generated element of $\cK$.
			\item
			A \emph{Fraïssé class} is is a class of finitely generated structures which is closed under isomorphisms, and has HP, JEP and AP.
			\item
			The \emph{age} $\Age(M)$ of a first-order structure $M$ is the class of all (isomorphism types of) finitely-generated substructures of $M$.\xqed{\lozenge}
		\end{itemize}
	\end{definition}
	We recall the notion of a partial automorphism (which is fundamental for this paper).
	\begin{definition}
		Given a first order structure $M$, a partial function $M\to M$ is called a \emph{partial automorphism} if it preserves the quantifier-free types over the empty set.
	\end{definition}
	
	\begin{fact}
		\label{fact:fraisse_theorem}
		If $\cK$ is a Fraïssé class, then there is a unique (up to isomorphism), countable structure $\mathbf K$ whose age is exactly $\cK$ and which is ultrahomogeneous (i.e.\ every finite partial automorphism of $\mathbf{K}$ extends to an automorphism).
		
		Furthermore, if $\cK$ is uniformly locally finite, then the theory of $\mathbf K$ has quantifier elimination and (if $\mathbf K$ is infinite) is $\aleph_0$-categorical.
	\end{fact}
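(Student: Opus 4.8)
The plan is to run the classical Fra\"{i}ss\'{e} construction and then read off the extra properties in the uniformly locally finite case.

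\emph{Construction of $\mathbf K$.} I would first produce a countable $\mathbf K$ with the \emph{$\cK$-extension property}: for every finitely generated $A\subseteq\mathbf K$ and every embedding $f\colon A\to B$ with $B\in\cK$ there is an embedding $g\colon B\to\mathbf K$ with $g\circ f=\mathrm{id}_A$. Since, for the classes considered here, the signature may be taken finite and relational, $\cK$ has only countably many isomorphism types, so one can fix an enumeration, each entry occurring infinitely often, of all pairs $(A,B)$ with $A\subseteq B\in\cK$ and $A$ finitely generated. Then build an increasing chain $A_0\subseteq A_1\subseteq\cdots$ in $\cK$: take any $A_0\in\cK$; at step $i$, if the bookkeeping presents a pair $(A,B)$ together with an embedding $A\hookrightarrow A_i$, use AP to choose $A_{i+1}\in\cK$ amalgamating $A_i$ and $B$ over $A$. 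Put $\mathbf K=\bigcup_i A_i$; a routine check (this is the fiddly point — one must make sure the bookkeeping really meets every demand over every finite subset of $\mathbf K$) shows that $\mathbf K$ has the $\cK$-extension property. Then HP gives $\Age(\mathbf K)\subseteq\cK$, while JEP together with the extension property gives the reverse inclusion, so $\Age(\mathbf K)=\cK$.

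\emph{Ultrahomogeneity and uniqueness.} Ultrahomogeneity follows by a back-and-forth: given an isomorphism $p$ between finitely generated substructures of $\mathbf K$ and $a\in\mathbf K$, realise the isomorphism type of $\langle\dom p,a\rangle$ over $\range p$ as some $B\in\cK$ with $\range p\subseteq B$, apply the extension property to embed $B$ over $\range p$ into $\mathbf K$, and thereby extend $p$ so that $a\in\dom p$; alternating sides along enumerations of $\mathbf K$ produces an automorphism extending the original $p$. Uniqueness is the analogous back-and-forth between two countable ultrahomogeneous structures $\mathbf K,\mathbf K'$ of age $\cK$: to put $a\in\mathbf K$ into the domain of a finite partial isomorphism $p\colon X\to Y$, embed $\langle X,a\rangle\in\cK=\Age(\mathbf K')$ into $\mathbf K'$ by some $h$, then use ultrahomogeneity of $\mathbf K'$ to move $h(X)$ onto $Y$ compatibly with $p$, and compose.

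\emph{The uniformly locally finite case.} Assume now $\cK$ is uniformly locally finite; since the signature is finite, $\cK$ has, for each $n$, only finitely many isomorphism types of $n$-generated structures, and these are finite. By ultrahomogeneity, two tuples of $\mathbf K$ with the same quantifier-free type over $\emptyset$ lie in one $\Aut(\mathbf K)$-orbit, hence have the same complete type; feeding this into the standard quantifier-elimination criterion (any embedding between finitely generated substructures of models of $\Th(\mathbf K)$ extends, after passing to a suitable elementary extension, to an embedding of a one-point extension — which one verifies using the $\cK$-extension property of $\mathbf K$ and amalgamation in $\cK$) shows that $\Th(\mathbf K)$ eliminates quantifiers. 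Hence, when $\mathbf K$ is infinite, the complete $n$-types of $\Th(\mathbf K)$ are exactly the quantifier-free ones, of which there are as many as $n$-generated isomorphism types in $\cK$, i.e.\ finitely many; so $\Th(\mathbf K)$ is $\aleph_0$-categorical by the Ryll--Nardzewski theorem. I expect the two steps needing genuine care to be the bookkeeping in the construction and the verification of the quantifier-elimination criterion; the remaining steps are routine.
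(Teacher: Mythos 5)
Your sketch is correct: it is the classical Fra\"{i}ss\'{e} construction (the chain built by bookkeeping and amalgamation, back-and-forth for ultrahomogeneity and uniqueness, and then Ryll--Nardzewski together with the fact that quantifier-free type determines orbit for the uniformly locally finite case). The paper itself does not reprove this statement --- it simply cites \cite[Theorems 6.1.2 and 6.4.1]{Hod} --- and your argument is exactly the textbook proof behind that citation, with the two spots you flag (the bookkeeping, and verifying the quantifier-elimination criterion) being precisely the places where the textbook treatment spells out the details.
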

	\begin{proof}
		This is classical; see for instance \cite[Theorem 6.1.2]{Hod} and \cite[Theorem 6.4.1]{Hod}.
	\end{proof}
	
	\begin{definition}
		The structure $\mathbf K$ as in Fact~\ref{fact:fraisse_theorem} is called the \emph{(Fraïssé) {limit}} of $\cK$.\xqed{\lozenge}
	\end{definition}
	\subsection*{Partial automorphisms, generic automorphisms}
	If $M$ is a countable first-order structure, then $\Aut(M)$ has a natural Polish group structure (with the pointwise convergence topology), and we can use descriptive set theory to study it. The notion of a generic is due to \cite{Tru92} and \cite{KR07} (for tuples).
	\begin{definition}
		\label{definition:generic}\leavevmode
		\begin{itemize}[nosep]
			\item
			An element $\sigma\in \Aut(M)$ is called \emph{generic} if its conjugacy class is comeagre in $\Aut(M)$ (i.e.\ it contains a dense $G_\delta$ set).
			\item
			More generally, a tuple $(\sigma_1,\ldots,\sigma_n)\in \Aut(M)$ is \emph{generic} if its diagonal conjugacy class (i.e.\ the orbit under the action $\Aut(M)$ on $\Aut(M)^n$ by coordinatewise conjugation) is comeagre.
			\item
			We say that $\Aut(M)$ has \emph{ample generics} if it has generic tuples of elements of arbitrary length.\xqed{\lozenge}
		\end{itemize}
	\end{definition}

	The definition of EAP below is due to \cite{Ivanov} (where it is called \emph{almost amalgamation property}). It is also used in \cite{KR07} (where it is called \emph{weak amalgamation property}).
	\begin{definition}Fix a class $\cK$ of first order structures, closed under isomorphisms.
		\label{definition:eap}
		\begin{itemize}[nosep]
			\item
			We say that $\cK$ has \emph{EAP} (\emph{existential amalgamation property}) if for every $A\in \cK$, there is some $B\in \cK$ and an embedding $i_{AB}\colon A\to B$ such that for any embeddings $i_{BC}\colon B\to C$, $i_{BD}\colon B\to D$  (with $C,D\in\cK$), there are embeddings $i_{CE}\colon C\to E$ and $i_{DE}\colon D\to E$ (where $E\in \cK$) such that $i_{CE}\circ i_{BC}\circ i_{AB}=i_{DE}\circ i_{BD}\circ i_{AB}$.
			\item
			We say that $\cK$ has \emph{CAP} (\emph{cofinal amalgamation property}) if we can choose the $B$ such that $i_{CE}\circ i_{BC}=i_{DE}\circ i_{BD}$ (i.e.\ if $\cK$ has a cofinal subclass of amalgamation bases).\xqed{\lozenge}
		\end{itemize}
	\end{definition}

	\begin{remark}
		\label{remark:cap_eap}
		If $\cK$ has CAP, it has EAP.\xqed{\lozenge}
	\end{remark}
	
	\begin{definition}
		Suppose that $\cK$ is a Fraïssé class. Let $\cK^n$ be the class of pairs $(A,\bar{p})$ where $A\in \cK$ and  $\bar{p}=(p_i)_{i<n}$ is an $n$-tuple of partial automorphisms of $A$.\footnote{\cite{KR07} denoted this class by $\cK^n_{p}$, but we decided to omit the $p$ from the notation to ease the notational burden a bit.}
	\end{definition}
	
	\begin{fact}
		\label{fact:amalgam_and_generics}
		Fix any $n\in \mathbf N$ and $\cK$ is a Fraïssé class with limit $\mathbf K$, then the following are equivalent:
		\begin{itemize}
			\item
			$\Aut(\mathbf K)$ has a generic $n$-tuple,
			\item
			$\cK^n$ has JEP and EAP.
		\end{itemize}
	\end{fact}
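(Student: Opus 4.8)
The plan is to prove the two implications separately, using the standard Baire-category analysis of conjugacy classes in $\Aut(\mathbf K)$. Throughout, identify $\Aut(\mathbf K)^n$ with $\Aut(\mathbf K')$ for a suitable auxiliary structure, or work directly; I will work directly. The key topological observation is that a subset of $\Aut(\mathbf K)^n$ is comeagre and invariant under diagonal conjugation if and only if it is dense $G_\delta$ and invariant, so by the Kuratowski–Ulam / Baire category theorem machinery, the diagonal conjugacy class of $\bar\sigma$ is comeagre if and only if it is non-meagre, if and only if it is somewhere dense, and in fact (since it is invariant) dense.

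First I would handle the direction ``generic $n$-tuple $\Rightarrow$ JEP and EAP''. Suppose $\bar\sigma = (\sigma_1,\dots,\sigma_{n-1})$ (indices $<n$) has comeagre diagonal conjugacy class $\mathcal C$. For JEP of $\cK^n$: given $(A,\bar p)$ and $(A',\bar p')$, the sets of $\bar\tau$ extending a conjugate of $\bar p$ (resp.\ $\bar p'$) — meaning there is a finite partial isomorphism of $\mathbf K$ conjugating $\bar p$ into $\bar\tau$ — are each dense open (density uses ultrahomogeneity of $\mathbf K$ and the fact that any $(A,\bar p)\in\cK^n$ embeds into some automorphism tuple by extending partial maps one point at a time, AP/JEP of $\cK$), hence they meet $\mathcal C$; picking $\bar\tau\in\mathcal C$ in the intersection and restricting to a large enough finite substructure of $\mathbf K$ gives a common extension in $\cK^n$, which is JEP. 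For EAP: let $\bar\sigma\in\mathcal C$; choose a finite substructure $B\subseteq\mathbf K$ closed under $\bar\sigma$ and $\bar\sigma^{-1}$ (enough of it that $\mathcal C$ contains a basic open neighbourhood determined by $\bar\sigma\restriction B$ — this is where $G_\delta$-ness and Baire category are used: a comeagre invariant set must, after conjugating, contain such a neighbourhood), and set $\bar p = \bar\sigma\restriction B \in \cK^n$ with $A\hookrightarrow B$ via the inclusion. Now if $(C,\bar q)$ and $(D,\bar r)$ both extend $(B,\bar p)$, embed each into automorphism tuples $\bar\tau_C$ of $\mathbf K$ and $\bar\tau_D$ of $\mathbf K$; both lie in the neighbourhood of $\bar\sigma\restriction B$, hence are conjugate to elements of $\mathcal C$, hence conjugate to each other; pulling back the conjugating automorphism and restricting to a common finite substructure yields the amalgam $E$ over $A$ as required by EAP. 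The mild bookkeeping here is ensuring the conjugating maps agree on the image of $A$, which follows because the neighbourhood was chosen to pin down $\bar\sigma$ on $B\supseteq A$.

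For the converse, ``JEP and EAP $\Rightarrow$ generic $n$-tuple'', the plan is to exhibit a comeagre diagonal conjugacy class as a countable intersection of dense open sets. For each $(B,\bar p)\in\cK^n$ that is an EAP-witness over its own $A$ (WLOG take $A=B$ by replacing $B$ with the witness), let $U_{(B,\bar p)}\subseteq\Aut(\mathbf K)^n$ be the set of $\bar\sigma$ such that \emph{either} no conjugate of $\bar\sigma\restriction(\text{some finite set})$ embeds a copy of $(B,\bar p)$, \emph{or} $\bar\sigma$ restricted to some finite substructure contains an EAP-witnessing extension of that copy; $U_{(B,\bar p)}$ is open, and density follows from ultrahomogeneity plus the extension-by-one-point argument (any finite partial tuple of partial automorphisms extends to one realizing the desired configuration, using JEP to merge and HP/AP of $\cK$). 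Then $\mathcal G = \bigcap_{(B,\bar p)} U_{(B,\bar p)}$ is dense $G_\delta$, and the standard back-and-forth argument — going back and forth between two elements $\bar\sigma,\bar\sigma'\in\mathcal G$, at each step using that both sides realize compatible EAP-witnessing configurations so finite partial conjugations can be extended coherently — shows any two elements of $\mathcal G$ are conjugate. Hence the conjugacy class of any $\bar\sigma\in\mathcal G$ is comeagre.

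The main obstacle is the back-and-forth step in the converse: one must be careful that the EAP property, which only guarantees amalgamation over the \emph{distinguished} subobject $A$ inside the witness $B$, is strong enough to drive the back-and-forth — the point (which is exactly the content of the Ivanov / Kechris–Rosendal characterisation) is that at each stage one works relative to the last EAP-witness encountered, so that the ``weakness'' of EAP (amalgamating only over $A$, not over all of $B$) is absorbed by always retreating to a fresh witness before extending. Getting the quantifier structure of the sets $U_{(B,\bar p)}$ right so that they are genuinely open and genuinely dense, and so that membership in all of them suffices for the back-and-forth, is the crux; the rest is routine Baire-category and Fraïssé-limit manipulation. Since this is a known result (\cite{Tru92,Ivanov,KR07}), I would in the write-up simply cite it, but the above is the proof I would reconstruct.
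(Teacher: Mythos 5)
The paper proves this fact purely by citation to Kechris--Rosendal \cite[Theorem 6.2]{KR07} (and Ivanov \cite[Theorem 1.2]{Ivanov} in the $\aleph_0$-categorical case), and you ultimately do the same, so the approaches coincide. Your reconstruction is roughly in the right spirit, though a couple of steps are imprecise as stated --- in particular, ``somewhere dense plus conjugation-invariant implies dense'' is not valid in general (that would be true for translation-invariance, not conjugation-invariance; density of the class is where JEP actually enters), and the density of the open sets $D_{(B,\bar p)}$ you invoke is essentially equivalent to JEP of $\cK^n$ rather than a consequence of ultrahomogeneity alone --- but since both you and the paper defer to the literature, these loose ends don't affect the comparison.
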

	\begin{proof}
		This is \cite[Theorem 6.2]{KR07}. Under the additional assumption that $\mathbf K$ is $\aleph_0$-categorical (which includes our applications), this is a special case of \cite[Theorem 1.2]{Ivanov}.
	\end{proof}
	
	\begin{corollary}
		\label{corollary:sufficient_for_generics}
		If $\cK$ is a Fraïssé class with limit $\mathbf K$ and $\cK^n$ has JEP and CAP, then $\mathbf K$ has a generic $n$-tuple of automorphisms.
	\end{corollary}
	\begin{proof}
		Immediate by Remark~\ref{remark:cap_eap} and Fact~\ref{fact:amalgam_and_generics}. (For $n=1$ this is essentially \cite[Theorem 2.1]{Tru92}.)
	\end{proof}

	\subsection*{Trees}
	\begin{definition}\leavevmode
		\label{definition:trees}
		\begin{itemize}[nosep]
			\item
			A \textit{tree} is a partially ordered set $(A, \leq)$ which is \emph{semilinear} (that is, for every $a_0\in A$, the set $A_{\leq a_0}=\{a\in A\mid a\leq a_0 \}$ is linearly ordered) and such that every pair of elements has a common lower bound.
			\item
			A \emph{meet-tree} (or $\meet$-tree) $(A, \leq, \meet)$ is a tree which is also a lower semilattice, i.e.\ a tree $(A,\leq)$ together with a binary (meet or infimum) function ${\meet}:A^2\to A$ such that for every $a,b \in A$, $a\meet b$ is the largest element of $A_{\leq a}\cap A_{\leq b}$.\xqed{\lozenge}
		\end{itemize}
	\end{definition}
	
	\begin{remark}
		If $(A,\leq)$ is a tree with the property that every pair has an infimum, then there is a unique way to expand it to a meet-tree. In particular, every finite tree has a unique meet-tree structure. However, not every embedding of finite trees yields an embedding of the resulting meet-trees.\xqed{\lozenge}
	\end{remark}

	\begin{remark}
		The $\meet$ operation is associative, commutative and idempotent.\xqed{\lozenge}
	\end{remark}
	
	\begin{definition}
		Given a tree $T$, the \emph{arity} of $T$ is the maximal size of a set $A\subseteq T$ of pairwise incomparable elements such that if $a_1,a_2,a_3\in A$ are distinct and $b\in T$ is such that $b<a_1$ and $b<a_2$, then $b<a_3$ (or $\infty$ if there is no finite bound).
	\end{definition}
	
	\begin{remark}
		If $T$ is a meet-tree, arity can be equivalently defined in the following way: we say that $T$ is $k$-ary if $k$ is the maximal size of a subset $A\subseteq T$ such that all pairs have the same meet, which is not equal to any of them. Note that this definition shows that being at most $k$-ary is an universal property in the language of meet-trees (it is not hard to see that it is not an universal property in the pure order language).\xqed{\lozenge}
	\end{remark}
	
	\begin{fact}
		\label{fact:generic_trees}
		The class of all finite meet-trees is a Fraïssé class (in the language of meet-trees). Given any positive integer $k$, the class of all finite $k$-ary meet-trees is a Fraïssé class (in the language of meet-trees).
		
		Consequently, there is a countable generic meet-tree, $\bT$, and for every $k$ there is a countable generic $k$-ary meet-tree, $\bT_k$. $\bT$ and each $\bT_k$ is $\aleph_0$-categorical, ultrahomogeneous and has elimination of quantifiers.
	\end{fact}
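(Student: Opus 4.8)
The plan is to verify that the class $\cK$ of all finite meet-trees, and for each $k\ge 1$ the class $\cK_k$ of all finite $k$-ary meet-trees, are Fra\"{i}ss\'{e} classes which are moreover uniformly locally finite; the ``consequently'' assertions then follow formally from Fact~\ref{fact:fraisse_theorem}. Uniform local finiteness is immediate: since $\meet$ is associative, commutative and idempotent and in a meet-tree the meet of any three elements equals one of their three pairwise meets, the substructure generated by an $n$-element set $X$ is contained in $X\cup\{x\meet y : x,y\in X\}$, of size at most $n+\binom n2$. So it remains to check HP, JEP and AP for $\cK$ and $\cK_k$.

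HP is straightforward: a substructure $B$ of a meet-tree $A$ is closed under $\meet$, each $B_{\le b}$ is a subchain of $A_{\le b}$, and $b\meet_A b'\in B$ is still the largest element of $B_{\le b}\cap B_{\le b'}$, so $B$ is a meet-tree; and since (as remarked after Definition~\ref{definition:trees}) having arity at most $k$ is a universal property in the meet-tree language, it passes to substructures, giving HP for $\cK_k$. For JEP, given nonempty $A_1,A_2\in\cK$ one forms $A_1\sqcup A_2$ with a new common root $\bot$ lying below everything and $a_1\meet a_2:=\bot$ for $a_i\in A_i$; this is a finite meet-tree containing both, and it is $k$-ary whenever $A_1,A_2$ are and $k\ge 2$ (a family with equal pairwise meet distinct from all members meets at a single node, so cannot straddle the two sides: a cross pair meets at $\bot$). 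For $k=1$, $\cK_1$ is the class of finite linear orders and one concatenates. (Allowing the empty meet-tree, JEP is also a special case of AP over $\emptyset$.)

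The substance is AP. Given $A\le B_1$ and $A\le B_2$, identify along $A$ so that $B_1\cap B_2=A$; we build $C$ on the set $B_1\cup B_2$ with one extra root $\bot$, keeping each $B_i$ intact and putting $\bot$ below everything, and must fix the relative position of $b_1\in B_1\setminus A$ and $b_2\in B_2\setminus A$. The guiding principle is that, $A$ being finite, the position of an element relative to $A$ inside $B_i$ is finitely described (which node of $A$ it hangs under, which lie below it, and so on). Using this we (i) merge the parts of $B_1$ and $B_2$ lying inside or below $A$ into a single semilinear order, by choosing in each region cut out by $A$ an interleaving of the two chains occurring there (each extends the common $A$-chain, so this is possible); and (ii) declare any ``genuinely cross'' pair $b_1,b_2$ incomparable with $b_1\meet_C b_2:=\sup\{x : x\le_C b_1,\ x\le_C b_2\}$, a supremum which finiteness of $A$ guarantees is attained (by an element of $A$, of an interleaved region, or $\bot$). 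One then checks that $\le_C$ is a partial order (transitivity of the ``bridges'' $x\le_{B_1}a\le_{B_2}y$ with $a\in A$), that $(C,\le_C,\meet_C)$ is a meet-tree with $\meet_C{\restriction}B_i=\meet_{B_i}$, and that $B_1\cap B_2=A$ in $C$.

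For $\cK_k$ one must also check that $C$ is $k$-ary. A family with equal pairwise meet $c$ distinct from all its members either lies in a single $B_i$ (hence has at most $k$ elements) or has $c\in A$ and straddles both sides; in the latter case a naive free amalgamation may create too wide a fan above $c$, so the ``new'' branching directions above $c\in A$ contributed by $B_1$ and by $B_2$ must be partially identified. Here one uses that, $B_1$ being finite, each new $B_1$-subtree above a node $a^*\in A$ has a $\meet$-least element $r$ with $(a^*,r)_{B_1}$ empty, so it can be reattached inside a new $B_2$-subtree above $a^*$ --- pushed up a branch to a node with spare branching capacity, which exists since leaves have it --- without disturbing the meets with $A$; doing this for all but $\max(n_1,n_2)$ of the $n_1+n_2$ new subtrees at $a^*$ keeps the number of directions above $a^*$ at most $k$, with analogous care at nodes outside $A$. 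The main obstacle is precisely this AP construction: the vertical interleaving and the horizontal maximal separation must be arranged so that semilinearity and the existence of binary meets hold simultaneously, and in the bounded-arity case one needs the extra device of rerouting new subtrees to avoid wide fans. Everything else --- HP, JEP, uniform local finiteness, and deducing $\aleph_0$-categoricity, ultrahomogeneity and quantifier elimination from Fact~\ref{fact:fraisse_theorem} (the limits $\bT$ and $\bT_k$ being clearly infinite, as they contain arbitrarily long chains) --- is routine.
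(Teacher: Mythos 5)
The paper gives no proof beyond declaring the first part straightforward and citing Fact~\ref{fact:fraisse_theorem} for the second, so there is nothing detailed of the authors' to compare against; your proposal supplies exactly the expected verification (HP, JEP, AP, uniform local finiteness, then Fra\"iss\'e). Your two-step AP construction --- interleave the chains cut out by $A$ in the downward closures, then freely amalgamate the rest with meets of cross pairs falling into the merged part --- is sound, and is the natural analogue, in the unenriched setting, of the more elaborate Lemma~\ref{lemma:down_closure} and Lemma~\ref{lemma:AP_in_trees_with_automorphisms} that the paper proves for meet-trees \emph{with automorphisms}. On the $k$-ary case, which is the only genuinely delicate point: grafting surplus new subtrees above $a^*\in A$ onto leaves coming from the other factor does preserve all meets with $A$ (via Fact~\ref{fact:lemmaxyz}) and hence keeps each $B_i\hookrightarrow C$ an embedding, so the rerouting idea works; and one simplification you can make is that no analogous care is actually needed at nodes of the interleaved chain lying outside $A$, because a new element of $B_1$ can only branch off from a node lying in $B_1$, a new element of $B_2$ only from a node of $B_2$, and these two sets meet only inside $A$ --- so $A$ is the sole place where over-branching can arise.
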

	\begin{proof}
		The first part is straightforward. The second part follows from Fact~\ref{fact:fraisse_theorem}.
	\end{proof}
	(Notice that in particular, a $1$-ary meet-tree is simply linear, and $\bT_1$ is (interdefinable with) the universal linear ordering, isomorphic to $(\mathbf Q,\leq)$.)
	
	We will use the notation $\bT$ and $\bT_k$ throughout the paper.
	
	\begin{remark}
		The class of all finite trees (in the pure order language) is \emph{not} a Fraïssé class --- it does not have the amalgamation property; it does, however have a model companion: there is a unique countable existentially closed tree (in the pure order language). It is an $\aleph_0$-categorical binary tree, but in contrast to $\bT_2$, which is a meet-tree, no two incomparable elements have a meet. (See \cite{BBPP18} for more details.) It might be interesting to ask whether it has a generic automorphism, but the methods based on Fact~\ref{fact:amalgam_and_generics} used in this paper do not seem to apply directly.\xqed{\lozenge}
	\end{remark}
	
	\begin{remark}
		Another approach to trees is by graph theory: we can identify every finite tree with an acyclic directed graph, but the Fraïssé limits of classes of finite trees in this language will be quite different (for instance, the ``order'' on each branch will not be dense). In \cite{KM19}, the authors study the existence of generic automorphisms in this context.\xqed{\lozenge}
	\end{remark}

	The following simple observation, reminiscent of the ultrametric triangle inequality, will be immensely useful in the rest of this paper.
	\begin{fact}\label{fact:lemmaxyz}
		Let $a,b,c$ be elements of a meet-tree $T$. Then:
		\begin{itemize}
			\item
			$a\meet b\meet c=a\meet b$ or $a\meet b\meet c=a\meet c$,
			\item
			if $a\meet b>a\meet c$, then $a\meet c=b\meet c$,
			\item
			if $a\meet b\geq a\meet c$, then $a\meet c\leq b\meet c$.
		\end{itemize}
	\end{fact}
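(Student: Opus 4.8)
The plan is to reduce all three items to the single structural fact underlying semilinearity: for any element $x$ of a meet-tree, the set $A_{\le x}$ is a chain, so any two elements lying below a common element are comparable. Throughout I would use freely that $x\meet y$ is \emph{by definition} the greatest common lower bound of $x$ and $y$; in particular $z\le x$ and $z\le y$ imply $z\le x\meet y$, and $x\meet y\le x,y$ always.

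I would do the third item first, since it is a one-liner and the others lean on the same move. Assume $a\meet b\ge a\meet c$. Then $a\meet c\le a\meet b\le b$, and trivially $a\meet c\le c$, so $a\meet c$ is a common lower bound of $b$ and $c$; hence $a\meet c\le b\meet c$. For the first item, note that $a\meet b$ and $a\meet c$ both lie below $a$, so by semilinearity they are comparable, and the statement is symmetric under swapping $b\leftrightarrow c$ (which swaps $a\meet b\leftrightarrow a\meet c$ and fixes $a\meet b\meet c$); so assume $a\meet b\le a\meet c$. Then $a\meet b\le a\meet c\le c$ together with $a\meet b\le a,b$ shows $a\meet b$ is a common lower bound of $a,b,c$, giving $a\meet b\le a\meet b\meet c$; the reverse inequality is immediate since $a\meet b\meet c\le a,b$. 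Hence $a\meet b\meet c=a\meet b$.

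The second item is the only one needing a genuine case split, and is the step I would flag as the "hard" one, though it is still short. Assume $a\meet b>a\meet c$; I claim $a\meet c=b\meet c$. For $\le$: from $a\meet c<a\meet b\le b$ and $a\meet c\le c$ we get $a\meet c\le b\meet c$. For $\ge$: it suffices to prove $b\meet c\le a$, because then $b\meet c$ is a common lower bound of $a$ and $c$ and so $b\meet c\le a\meet c$. Now $b\meet c$ and $a\meet b$ both lie below $b$, hence are comparable. If $a\meet b\le b\meet c$, then $a\meet b\le c$ and $a\meet b\le a$ give $a\meet b\le a\meet c$, contradicting the hypothesis; therefore $b\meet c\le a\meet b\le a$, as needed. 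The only thing to be careful about is keeping track of which ambient chain each comparability is being drawn from; there is no real obstacle.
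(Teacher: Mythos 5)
Your proof is correct, and it follows exactly the approach the paper indicates: the paper leaves the verification as an exercise with the hint "straightforward (using semilinearity)," and your argument is precisely the semilinearity-based case analysis that hint points to. The only cosmetic point worth noting is your observation that the first bullet is symmetric under $b\leftrightarrow c$, which cleanly halves the work there.
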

	\begin{proof}
		The proof is straightforward (using semilinearity).
%		For the first bullet, by semilinearity, either $a\meet b\geq a\meet c$ or $a\meet c\geq a\meet b$. In the former case, $a\meet c\leq b$, so $a\meet c=a\meet b\meet c$. In the latter case analogously, $a\meet b=a\meet b\meet c$.
%		
%		For the second bullet, if $a\meet b>a\meet c$, then $a\meet b\neq a\meet b\meet c$, so by applying the first bullet twice, $a\meet c=a\meet b\meet c$ and $b\meet c=a\meet b\meet c$.
%		
%		The third bullet follows easily from the second.
	\end{proof}
	Note that the first bullet in Fact~\ref{fact:lemmaxyz} says in particular that if $T$ is a meet-tree and $A\subseteq T$, then the substructure of $T$ generated by $A$ is exactly the set of meets of pairs of elements of $A$.
	
	\begin{definition}
		\label{definition:cut}
		Given a partial order $(P,\leq)$, and an $A\subseteq P$, 
		\begin{itemize}[nosep]
			\item
			a (\emph{lower}) \emph{cut} in $A$ is a downwards closed, (upwards) directed subset of $A$ (i.e.\ a $C\subseteq A$ such that for any $c\in C$ and $a\in A$, if $a\leq c$, then $a\in C$, and for any $c_1,c_2\in C$, there is some $c\in C$ such that $c\geq c_1,c_2$).
			\item 
			an \emph{order type} over $A$ is simply a complete quantifier-free type over $A$ in $P$, in the pure order language (with equality).\xqed{\lozenge}
		\end{itemize}
	\end{definition}
	
	\begin{remark}
		\label{remark:cuts_and_order_types}
		Fix a partial order $(P,\leq)$ (possibly with some additional structure) and a subset $A\subseteq P$.
		
		Given an order type $p$ over $A$, we have two corresponding cuts in $A$, namely $p_{\geq}:=\{a\in A\mid p\vdash x\geq a \}$ and $p_{>}:=\{a\in A\mid p\vdash x>a \}$. The two are equal if and only if $p$ is not realised in $A$. If $P$ is linear, then the two cuts uniquely determine $p$, but in general, it is not true.
		
		Conversely, given a \emph{nonempty} cut $C\subseteq A$, there is an order type $p$ over $A$ such that $p_{\geq}=C$.\xqed{\lozenge}
	\end{remark}
	
	\begin{definition}
		Given any $b\in P$, by the \emph{order type of $b$ over $A$}, $\otp(b/A)$, we mean simply the quantifier-free type of $b$ over $A$ in the order language, and by \emph{the cut of $b$ in $A$} we mean simply the cut $\{a\in A\mid b\geq a \}$ (which is the same as $\otp(b/A)_{\geq}$ in the notation of Remark~\ref{remark:cuts_and_order_types}).\xqed{\lozenge}
	\end{definition}

	\begin{remark}
		\label{remark:cuts_in_trees}
		In a tree, a directed set is linear, so a cut is simply a downwards closed chain.\xqed{\lozenge}
	\end{remark}
	
	\begin{remark}
		\label{remark:ordering_cuts}
		Note that for any poset $(P,\leq)$, the cuts in $P$ are ordered simply by inclusion. If we denote by $\hat P$ the set of all cuts in $P$, partially ordered by inclusion, it is easy to see that:
		\begin{itemize}[nosep]
			\item
			$P$ naturally embeds into $\hat P$ (where each element is identified with its cut in $P$);
			\item
			$\hat P$ is complete in the sense that every directed subset of $\hat P$ has a supremum (namely, the union);
			\item
			if $P$ is a tree, then so is $\hat P$, and moreover, $\hat P$ has a canonical meet-tree structure, with the meet given by intersection;
			\item
			if $(P,\leq,\meet)$ is a meet-tree, then $(P,\leq,\meet)$ is a substructure of $(\hat P,\subseteq,\cap)$.\xqed{\lozenge}
		\end{itemize}
	\end{remark}
	
	The following fact appears to be folklore.
	\begin{fact}\label{fact:typesintrees}
		Let $B$ be a meet-tree, $b\in B$, and $A\subseteq B$ a finite substructure (in the language of meet-trees). Put $b' = \max\{x \meet b : x\in A\}$, and let $a\in A$ be such that $a\geq b'$.
		Then the quantifier-free type $\qftp(b/A)$ is determined by knowing whether $b=b'$ or $b'<b$, the element $a$, and the order type of $b'$ over $A_{\leq a}$.
		
		(In particular, if $B$ has quantifier elimination (for instance, if $B=\bT$ or $B=\bT_k$ for some $k>0$, in the sense of Fact~\ref{fact:generic_trees}), this also determines $\tp(b/A)$.)
	\end{fact}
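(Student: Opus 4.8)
The plan is to reduce the computation of $\qftp(b/A)$ to understanding a single finite meet-tree, namely the substructure $\langle A\cup\{b\}\rangle$ generated by $A$ and $b$, and to show that this substructure is always $A\cup\{b,b'\}$ with a meet-tree structure over $A$ that is completely pinned down by the listed data. First I would identify $\langle A\cup\{b\}\rangle$: by the remark following Fact~\ref{fact:lemmaxyz}, it equals $A\cup\{b\}\cup\{x\meet b:x\in A\}$, since $A$ is already closed under $\meet$. Pick $x_0\in A$ with $x_0\meet b=b'$. For any $x\in A$, the elements $x\meet b$ and $b'$ both lie below $b$ and are therefore comparable, while maximality of $b'$ forces $x\meet b\leq b'$; if $x\meet b=b'$ there is nothing to do, and if $x\meet b<b'=x_0\meet b$, the second bullet of Fact~\ref{fact:lemmaxyz} applied to $(b,x_0,x)$ yields $x\meet b=x_0\meet x\in A$. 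Hence $\langle A\cup\{b\}\rangle=A\cup\{b,b'\}$, and consequently $\qftp(b/A)$ is determined by $\qftp(b'/A)$ together with $\qftp(b/A\cup\{b'\})$.

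Next I would show that $\qftp(b'/A)$ is recovered from $a$ and $\otp(b'/A_{\leq a})$. Since $b'\leq a$, for each $x\in A$ we have $a\meet x\in A_{\leq a}$ and, by a short monotonicity argument, $b'\meet x=b'\meet(a\meet x)$; as $A_{\leq a}$ is a chain, this last meet is just the smaller of $b'$ and $a\meet x$, which is read off from $\otp(b'/A_{\leq a})$, and similarly every order relation between $b'$ and an element of $A$ is visible from the relations between $b'$ and elements of $A_{\leq a}$. Knowing $a$ is precisely what tells us which elements of $A$ lie in $A_{\leq a}$ (and what the various $a\meet x$ are, though these of course belong to $A$), so this data recovers $\qftp(b'/A)$ in the meet-tree language.

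Finally I would show that $\qftp(b/A\cup\{b'\})$ is determined by the single bit ``$b=b'$'' versus ``$b'<b$'' (recall $b'\leq b$ always, as $b'=x_0\meet b$). The case $b=b'$ is trivial. If $b'<b$, then $b'\leq b$ gives $b'\meet x\leq b\meet x$, while $b\meet x\leq b'$ (maximality) and $b\meet x\leq x$ give $b\meet x\leq b'\meet x$; hence $b\meet x=b'\meet x$ for all $x\in A$, and the order relation of $b$ to each $x$ agrees with that of $b'$, except that $b\leq x$ is now impossible, since it would force $b\leq b'$. This pins down the meet-tree structure on $A\cup\{b,b'\}$ over $A\cup\{b'\}$, and the parenthetical claim about $\tp(b/A)$ is then immediate from quantifier elimination.

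The only real work lies in the first step — checking that the generated substructure collapses to $A\cup\{b,b'\}$ — and in confirming that the meets appearing in the last two steps are genuinely forced rather than merely bounded; all of this is a careful but routine bookkeeping exercise using Fact~\ref{fact:lemmaxyz} and monotonicity of $\meet$, so I would not expect a serious obstacle, only some case analysis.
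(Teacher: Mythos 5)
Your proof is correct, and since the paper leaves this fact as an exercise, your write-up is a legitimate filling of that gap. The decomposition you use — first showing $\langle A\cup\{b\}\rangle = A\cup\{b,b'\}$ via Fact~\ref{fact:lemmaxyz}, then recovering $\qftp(b'/A)$ from the cut of $b'$ in the chain $A_{\leq a}$ (using $b'\meet x = b'\meet(a\meet x) = \min(b', a\meet x)$ and the observation that $x\leq b'$ forces $x\in A_{\leq a}$), and finally pinning down $\qftp(b/A\cup\{b'\})$ from the single bit $b=b'$ versus $b'<b$ (noting $b\meet x = b'\meet x$ and $x\leq b \Leftrightarrow x\leq b'$) — is exactly the natural argument, and the applications of Fact~\ref{fact:lemmaxyz} and semilinearity are all sound.
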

	\begin{proof}
		The proof is left as an exercise to the reader.
	\end{proof}

	\begin{corollary}
		\label{corollary:meet_above_prime}
		Fix a meet-tree $T$, and arbitrary $a_1,a_2\in T$, as well as a finite $B\subseteq T$. Then if $a_1\meet a_2>\max_{b\in B}b\meet a_1$, then $\qftp(a_1/B)=\qftp(a_2/B)$ (in particular, if $T=\bT$, then $\tp(a_1/B)=\tp(a_2/B)$).
	\end{corollary}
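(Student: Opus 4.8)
The plan is to read this off from Fact~\ref{fact:lemmaxyz} together with Fact~\ref{fact:typesintrees}. We may assume $B\neq\emptyset$, the empty case being trivial (a single element has a unique quantifier-free type in the meet-tree language, and the hypothesis is vacuous anyway).

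First I would observe that $b\meet a_1=b\meet a_2$ for every $b\in B$. Indeed, the hypothesis gives $a_1\meet a_2>\max_{b\in B}b\meet a_1\geq b\meet a_1$ for each such $b$, so by the second bullet of Fact~\ref{fact:lemmaxyz} (with $(a_1,a_2,b)$ playing the role of $(a,b,c)$) we get $a_1\meet b=a_2\meet b$. In particular $b'_1:=\max_{b\in B}b\meet a_1$ and $b'_2:=\max_{b\in B}b\meet a_2$ coincide (both maxima exist, being taken over a finite subset of the chain $T_{\leq a_i}$); call this common element $b'$, and fix some $a\in B$ attaining the maximum, so that $a\meet a_1=a\meet a_2=b'$ and hence $a\geq b'$.

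Next I would match up the data of Fact~\ref{fact:typesintrees} for $b=a_1$ and for $b=a_2$. Since $a_1\meet a_2>b'$ while $a_1\meet a_2\leq a_1$ and $a_1\meet a_2\leq a_2$, we have $b'<a_1$ and $b'<a_2$; so in both cases the relevant element ``$b'$'' of the fact is the same, it lies strictly below $a_i$, the auxiliary element $a\in B$ may be taken to be the same, and the order type of $b'$ over $B_{\leq a}$ is literally identical (it is the order type of one fixed element $b'$). By Fact~\ref{fact:typesintrees} these data determine $\qftp(a_i/B)$, whence $\qftp(a_1/B)=\qftp(a_2/B)$; and if $T=\bT$ or $T=\bT_k$, quantifier elimination (Fact~\ref{fact:generic_trees}) upgrades this to $\tp(a_1/B)=\tp(a_2/B)$.

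I do not expect a genuine obstacle here: the whole content is the first observation, that the hypothesis forces $a_1$ and $a_2$ to have the same meet with every point of $B$, after which everything in Fact~\ref{fact:typesintrees} automatically coincides. The only point needing a little care is that one single auxiliary element $a$ serves simultaneously for $a_1$ and $a_2$, which is exactly what $a\meet a_1=a\meet a_2$ delivers.
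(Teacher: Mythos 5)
Your argument is correct and follows essentially the same route as the paper: the crux is observing via the second bullet of Fact~\ref{fact:lemmaxyz} that the hypothesis forces $b\meet a_1=b\meet a_2$ for all $b\in B$, and then invoking Fact~\ref{fact:typesintrees} (noting $b'<a_1,a_2$) to conclude that the quantifier-free types agree. Your write-up is a bit more explicit about identifying the auxiliary element $a$ and the order type of $b'$, but there is no substantive difference from the paper's proof.
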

	\begin{proof}
		Note that the assumption immediately implies that $a_1,a_2\neq \max_{b\in B}b\meet a_1$, so by Fact~\ref{fact:typesintrees} it is enough to show that $\max_{b\in B}b\meet a_2=\max_{b\in B}b\meet a_1$. But the assumption tells us immediately that for every $b\in B$ we have $b\meet a_1<a_1\meet a_2$, so by Fact~\ref{fact:lemmaxyz}, $b\meet a_1=b\meet a_2$, which completes the proof.
	\end{proof}
	
	\section{There is no generic pair}
	\label{section:There is no generic pair}
	In this section, we will use Fact~\ref{fact:amalgam_and_generics} to show that the countable generic meet-trees $\bT$ and $\bT_k$ do not admit generic pairs of automorphisms.
	\begin{definition}\leavevmode
		\begin{itemize}[nosep]
			\item
			Given a pair $(g_1,g_2)$ of partial functions, we say that $(g_1',g_2')$ is an \emph{extension} of $(g_1,g_2)$ if $g_1',g_2'$ are partial functions such that $g_1\subseteq g_1'$ and $g_2\subseteq g_2'$.
			\item
			Given two pairs $(g_1',g_2')$, $(g_1'',g_2'')$ of partial automorphisms of a partially ordered set $P$ and a point $a\in P$, we say that they are \emph{irreconcilable over $a$} if there is no partial ordering $Q$ and partial automorphisms $f_1,f_2$ of $Q$ such that $(P,g_1',g_2',a),(P,g_1'',g_2'',a)\hookrightarrow (Q,f_1,f_2,b)$ for some $b\in Q$.\xqed{\lozenge}
		\end{itemize}
	\end{definition}
	
	\begin{remark}
		If $(g_1',g_2'), (g_1'',g_2''),(g_1''',g_2''')$ are pairs of partial automorphisms of a poset $P$ such that $(g_1',g_2')$ and $(g_1'',g_2'')$ are irreconcilable over $a\in P$, while $(g_1''',g_2''')$ extends $(g_1',g_2')$, then it is easy to see that:
		\begin{itemize}
			\item
			$(g_1''',g_2''')$ and $(g_1'',g_2'')$ are irreconcilable over $a$,
			\item
			there is no $f\in \Aut(P)$ fixing $a$, such that $g_1'\cup fg_1''f^{-1}$ and $g_2'\cup fg_2''f^{-1}$ are partial automorphisms.\xqed{\lozenge}
		\end{itemize}
	\end{remark}
	
%	\begin{remark}
%		If $(P,g_1,g_2,\leq)$ is a poset in a class $\mathcal C_2$ of posets with two partial automorphisms and the pair $(g_1,g_2)$ admits two irreconcilable extensions in $\mathcal C_2$, then $(P,g_1,g_2,\leq)$ is not an amalgamation base.
%	\end{remark}
	
	\begin{proposition}
		\label{proposition:automorphism_extends}
		Suppose $L$ is a dense linear order, unbounded from below. Suppose $g$ is a finite partial automorphism of $L$, $a,b\in L$ are such that $b\leq g(a)\leq a$. Then there are $d_1,d_2\in L$ such that $g\cup \{(d_1,b),(b,d_2)\}$ is a partial automorphism.
	\end{proposition}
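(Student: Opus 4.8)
The plan is to reduce the statement to the standard one-point back-and-forth for dense linear orders. The key observation is that it suffices to produce a partial automorphism $g^{+}\supseteq g$ of $L$ with $b\in\dom(g^{+})\cap\range(g^{+})$: given such a $g^{+}$, one sets $d_{1}:=(g^{+})^{-1}(b)$ and $d_{2}:=g^{+}(b)$, and then $g\cup\{(d_{1},b),(b,d_{2})\}$ is simply the restriction of $g^{+}$ to $\dom(g)\cup\{d_{1},b\}$ (because $g\subseteq g^{+}$), hence a partial automorphism. So the whole task is to enlarge $g$ until $b$ belongs to both the domain and the range.

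To do this I would use the familiar fact that a finite partial automorphism $h$ of $L$ can be extended by one point in the domain: if $c\in L$ satisfies $c\leq\max\dom(h)$, then some $c'\in L$ makes $h\cup\{(c,c')\}$ a partial automorphism — take $c'=h(c)$ if $c\in\dom(h)$, use density of $L$ if $c$ lies strictly between two consecutive elements of $\dom(h)$, and use unboundedness from below if $c<\min\dom(h)$. Applying this to $h^{-1}$ yields the dual statement, adjoining to $h$ a new preimage of any $c\in L$ with $c\leq\max\range(h)$. The construction of $g^{+}$ then proceeds in two steps. First, $b\leq g(a)\leq a$ together with $a\in\dom(g)$ gives $b\leq\max\dom(g)$, so the one-point fact produces $d_{2}$ with $g_{1}:=g\cup\{(b,d_{2})\}$ a partial automorphism. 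Second, since $\range(g)\subseteq\range(g_{1})$ we get $b\leq g(a)\leq\max\range(g)\leq\max\range(g_{1})$, so the dual statement applied to $g_{1}$ produces $d_{1}$ with $g^{+}:=g_{1}\cup\{(d_{1},b)\}=g\cup\{(b,d_{2}),(d_{1},b)\}$ a partial automorphism; this is the required extension.

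I do not expect any genuinely hard step — the main thing to get right is the realisation that handling the extension as two successive one-point extensions is already enough, so that all the order relations among $d_{1}$, $b$, $d_{2}$ and the points of $g$ fall into place automatically and no case analysis on where $b$ sits relative to $\dom(g)$ and $\range(g)$ is needed. The hypothesis $b\leq g(a)\leq a$ is used precisely (and only) to guarantee $b\leq\max\dom(g)$ and $b\leq\max\range(g)$, which is exactly what prevents either one-point extension from having to produce a point lying above the top of $L$ — a point that need not exist, since $L$ is assumed unbounded only from below, not from above.
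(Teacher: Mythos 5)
Your proof is correct, and it takes a genuinely different route from the paper. The paper's argument reduces to the countable case, embeds $L$ into a countable dense linear order $K$ without endpoints in which every new point lies above $a$, extends $g$ to a \emph{total} automorphism $f$ of $K$ by ultrahomogeneity of $(\mathbf Q,<)$, and then sets $d_1:=f^{-1}(b)$, $d_2:=f(b)$, using the hypothesis $b\leq g(a)\leq a$ only to verify that $d_1,d_2\leq a$ and hence land in $L$ rather than in $K\setminus L$. You instead work entirely inside $L$ and carry out an explicit two-step one-point back-and-forth: first adjoin $b$ to the domain, then adjoin it to the range, with the hypothesis $b\leq g(a)\leq a$ doing the same job in disguise — it caps $b$ by $\max\dom(g)$ and by $\max\range(g)$, so neither one-point extension needs to go above $L$'s (possibly nonexistent) top. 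Your version is more elementary and avoids the reduction to the countable case and the appeal to ultrahomogeneity of $(\mathbf Q,<)$; the paper's version is shorter on the page at the cost of invoking that standard fact. One small point worth making explicit if you were to write this out in full: in step 2, if $b$ is already in $\range(g_1)$ (for instance if $b\in\range(g)$, or if $d_2=b$), the dual one-point extension just returns $d_1=g_1^{-1}(b)$ and $g^+=g_1$, which is still exactly what is needed — so no case split is required, as you correctly anticipated.
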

	\begin{proof}
		We may assume without loss of generality that $L$ is countable (by replacing it with a countable dense subset containing $b$, the range and the domain of $g$). In this case, there is some $K\supseteq L$, countable, totally ordered without endpoints; since $L$ is dense without lower bound, we may assume that for every $k\in K\setminus L$, we have $k>a$.
		
		Countable dense orderings without endpoints are ultrahomogeneous, so $g$ can be extended to some $f\in \Aut(K)$. Put $d_1:=f^{-1}(b),d_2:=f(b)$. Now, we have $b\leq a$, so $d_2=f(b)\leq f(a)=g(a)\leq a$, so $d_2\in L$. On the other hand, $b\leq g(a)=f(a)$, so $d_1=f^{-1}(b)\leq f^{-1}(f(a))=a$, so $d_1\in L$. Since $f$ is a partial automorphism, so is $g\cup \{(d_1,b),(b,d_2)\}\subseteq f$, so we are done.
	\end{proof}
	The following fact is essentially \cite[Lemma 6.1.1]{Sin17}, but we slightly strengthen the conclusion using Proposition~\ref{proposition:automorphism_extends}, and we give a more detailed proof.
	\begin{proposition}\label{proposition:daoudthesis}
		Let $(L,<)$ be a dense linear order, unbounded from below, and take some $a\in L$. Let $q_1,q_2$ be finite partial automorphisms of $L$ such that $q_1(a),q_2(a)<a$. Then $(q_1,q_2)$ admits two irreconcilable extensions $(q_1',q_2')$ and $(q_1'',q_2'')$ to pairs of finite partial automorphisms of $L$.
	\end{proposition}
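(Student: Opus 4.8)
The plan is to produce the two extensions together with two words $w_1,w_2$ --- finite compositions of the symbols $x_1,x_2$ --- such that $w_1(q_1',q_2')(a)$ and $w_2(q_1',q_2')(a)$ are defined and satisfy $w_1(q_1',q_2')(a)<w_2(q_1',q_2')(a)$, while $w_1(q_1'',q_2'')(a)$ and $w_2(q_1'',q_2'')(a)$ are defined and satisfy the \emph{reverse} inequality (here $w_i(p_1,p_2)$ denotes the partial map obtained by composing $p_1,p_2$ along $w_i$). This forces irreconcilability over $a$: if $(L,q_1',q_2',a)$ and $(L,q_1'',q_2'',a)$ both embedded into some $(Q,f_1,f_2,b)$ via $\iota',\iota''$, then the (defined) word values transport through the embeddings, so $w_1(f_1,f_2)(b)$ and $w_2(f_1,f_2)(b)$ are defined elements of $Q$ which by $\iota'$ satisfy $w_1(f_1,f_2)(b)<w_2(f_1,f_2)(b)$ and by $\iota''$ satisfy the opposite inequality --- impossible in the linear order $Q$.

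By symmetry I assume $c_1:=q_1(a)\le c_2:=q_2(a)<a$. It is enough to find a finite extension $(q_1^{*},q_2^{*})\supseteq(q_1,q_2)$ inside $L$ and a nonempty composition $w_0$ of $x_1,x_2$ such that, writing $p:=w_0(q_1^{*},q_2^{*})(a)$ (which we require to be defined), the sets of legal values for $q_1^{*}(p)$ and for $q_2^{*}(p)$ --- that is, the intervals of $L$ in which one may set $q_i^{*}(p)$ while keeping a partial automorphism --- have a common point. Granting this, pick $q_1'(p)<q_2'(p)$ and, separately, $q_1''(p)>q_2''(p)$ inside the overlap, leave the rest of $q_i^{*}$ unchanged, and put $w_1:=x_1\cdot w_0$ and $w_2:=x_2\cdot w_0$; then $w_i(q_1',q_2')(a)=q_i'(p)$ and similarly for the double prime, which are the contradictory configurations above. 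In particular it is enough to arrange that $p$ lies strictly below $\dom q_1^{*}\cup\dom q_2^{*}$: then both legal sets are initial intervals of $L$ unbounded below, hence overlap.

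So the crux --- and the step I expect to be the main obstacle --- is to drive some positive word value of $a$ below the finite set $F:=\dom q_1\cup\range q_1\cup\dom q_2\cup\range q_2$. The positive-word orbit of $a$ is finite and lies in $(-\infty,a)$, and one pushes it down one element of $F$ at a time: each step is an instance of Proposition~\ref{proposition:automorphism_extends} (applicable because every orbit point is $\le q_i(a)$) together with the absence of a lower bound in $L$. The obstruction is a \emph{wall}: a level $\rho$ which neither $q_1$ nor $q_2$ can cross downwards, i.e.\ such that for each $i$ there is $s_i\in\dom q_i$ with $s_i\le\rho\le q_i(s_i)$; then order-preservation keeps every orbit point above $\rho$ under every extension. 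A wall forces $\rho<c_1$ (from $s_i<a$, whence $q_i(s_i)<c_i$), so the walls lie in a bounded region, and --- being the points of the set $\bigcup_{s:\,q_1(s)\ge s}[s,q_1(s)]\cap\bigcup_{s:\,q_2(s)\ge s}[s,q_2(s)]$, a finite union of closed intervals --- they have a largest element $\rho^{*}$ whenever they exist. One then works inside $(\rho^{*},a]$ --- again a dense linear order with no least element, on which $q_1,q_2$ induce partial automorphisms still moving $a$ strictly down, and which by the same interval computation contains no wall --- and checks that in a wall-free situation the downward drive does succeed, yielding $w_0$ and a value $p\in(\rho^{*},a)$ below $F\cap(\rho^{*},a]$; the finitely many elements of $F$ lying at or below $\rho^{*}$ cause no trouble, because order-preservation of the extended $q_i^{*}$ (which we have arranged so that some point above $p$ is sent onto $p$) already forces their $q_i$-images to be $<p$, keeping the two legal intervals overlapping. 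Carrying out this wall analysis together with the bookkeeping of the downward drive is the heart of the argument; everything else is routine.
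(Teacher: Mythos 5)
Your high‑level scheme is sound: produce a word value $p$ of $a$ at which the legal intervals for extending $q_1$ and $q_2$ overlap, then split the two extensions in opposite directions; the irreconcilability deduction via $w_1=x_1w_0$, $w_2=x_2w_0$ is correct. But the specific plan for finding such a $p$ has a genuine gap, in exactly the place you flag as the ``heart of the argument.''

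The problematic claim is that the elements of $F$ at or below $\rho^{*}$ ``cause no trouble, because order‑preservation of the extended $q_i^{*}$ (which we have arranged so that some point above $p$ is sent onto $p$) already forces their $q_i$‑images to be $<p$.'' This is only true for the single $q_i$ used in the final drive step; the other map $q_j$ need not send anything above $p$ onto $p$, and its low‑lying domain elements can push its legal interval entirely above $p$. Concretely, take $L=\mathbf Q$, $a=0$, $q_1=\{(-30,-20),(0,-10)\}$, $q_2=\{(-25,-15),(0,-10)\}$. The stall intervals are $[-30,-20]$ and $[-25,-15]$, so $\rho^{*}=-20$. Driving with $q_1^{*}(-10):=-18$ gives $p=-18\in(\rho^{*},-15)$, below $F\cap(\rho^{*},a]$. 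But $q_2(-25)=-15>p$, so the legal interval for $q_2^{*}(p)$ is $(-15,-10)$, disjoint from the interval $(-20,-18)$ for $q_1^{*}(p)$. No positive‑word drive repairs this: the orbit stays above $\rho^{*}=-20$, while $q_2$'s stall interval $(-20,-15]$ persists (and trying instead to put $p$ into $\range q_2^{*}$ as you suggest fails because the required preimage lies below $-25<p$). Also beware that computing legal intervals inside $(\rho^{*},a]$ is not the same as inside $L$, precisely because of these low domain elements; partial automorphisms of the suborder need not lift.

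The paper's proof uses a genuinely different mechanism that sidesteps this. It works with the full $\langle q_1,q_2\rangle$‑orbit (inverses allowed) and minimizes a counting function $C_m$, the number of domain/range points at or below the orbit minimum, over all finite extensions. Crucially, this optimization permits extensions that splice the orbit onto pre‑existing points below the walls --- in the example above, setting $q_2^{*}(-30):=-18$ sends the orbit minimum down to $-30$, then $-40$, etc. The minimal configuration is then shown to have its orbit minimum $c$ in exactly one of $\dom q_1,\range q_1,\dom q_2,\range q_2$, after which only the \emph{other} map is extended at $c$ in two incompatible ways (one above, one below). Your positive‑word drive cannot reach such configurations, and the wall argument does not control the constraints on the $q_j$ not used in the last step. (For what it is worth, in the example the legal intervals do already overlap at $p=-10$ --- they are $(-20,-10)$ and $(-15,-10)$ --- so your general scheme is realizable there, just not by the drive you describe.)
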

	\begin{proof}
		Given a finite partial automorphism $g$ of $L$ and an element $b_0\in L$, put $C(b_0,g):=\#\{b\in\dom(g)\mid b\leq b_0 \}+ \#\{b\in\range(g)\mid b\leq b_0 \}$. Note that $C(b_0,g)$ is always a non-negative integer; note also that $C$ is monotone in $b_0$: if $b_0'\leq b_0$, then $C(b_0',g)\leq C(b_0,g)$. Given a pair $(g_1,g_2)$ of finite partial automorphisms, denote by $m(g_1,g_2)$ the minimal element of the $\langle g_1,g_2\rangle$-orbit of $a$ (i.e.\ the smallest element that can be obtained from $a$ by successive applications of $g_1,g_2,g_1^{-1},g_2^{-1}$). Write $C_m(g_1,g_2)$ for $C(m(g_1,g_2),g_1)+C(m(g_1,g_2),g_2)$.
		
		Call a pair $(g_1,g_2)$ \emph{minimal} if $C_m(g_1,g_2)$ is minimal among its extensions. We may assume without loss of generality that $(q_1,q_2)$ is minimal (otherwise, we can simply extend it).
		
		Write $c$ for $m(q_1,q_2)$ and $B$ for the union of domains and ranges of $q_1$ and $q_2$. Clearly, $c\in B$. Furthermore, $c\leq q_1(a),q_2(a)$, so it satisfies the assumptions of Proposition~\ref{proposition:automorphism_extends} for both $q_1$ and $q_2$.
		
		We claim that $c$ is in only one of $\dom(q_1),\dom(q_2),\range(q_1),\range(q_2)$. We will show that $c\notin \range(q_1)\cap \range(q_2)$ (the other cases are either analogous or easy to see). Suppose this is not the case. Since $c$ is minimal in its orbit, either $q_1^{-1}(c)>c$ or $q_2^{-1}(c)>c$ (otherwise, if $q_1(c)=q_2(c)=c$, the orbit of $c$ would have only one point, but we know that $a>c$ is in the same orbit). Suppose without loss of generality that the former holds. Let $d\in L$ be such that $g_1:=q_1\cup\{(c,d)\}$ is a partial automorphism (which we have by Proposition~\ref{proposition:automorphism_extends}). Then, since $c<q_1^{-1}(c)$, we have $d<c$. It is easy to see that $C(d,q_2)\leq C(c,q_2)-1$ (we do not count $c$ on the left-hand side) and
		\[C(d,g_1)\leq C(d,q_1)+1\leq C(c,q_1)-1+1=C(c,q_1).\]
		(The first inequality is because the only extra element we count in $C(d,g_1)$ is $d$, the second one is because $c$ is counted in $C(c,q_1)$, but not in $C(d,q_1)$.) Since $m(g_1,q_2)\leq d$, we conclude that $C_m(g_1,q_2)<C(c,q_1)+C(c,q_2)$, contradicting minimality of $(q_1,q_2)$.
		
		From now, we consider the case when $c\in \range(q_1)$ (the other cases are analogous), whence $c\notin \dom(q_2)\cup\range(q_2)$.
		
		Let $c^+>c$ be such that $(c,c^+]\cap B=\emptyset$, and let $c^-<c$ be such that $[c^-,c)\cap B=\emptyset$. Note that $c^+$ exists by density and the fact that $a>c$, while $c^-$ exists by density and the assumption that $L$ has no lower bound. We claim that $q_2':=q_2\cup \{(c,c^+)\}$ and $q_2'':=q_2\cup\{(c,c^-)\}$ are both partial automorphisms of $L$. Then if we put $q_1'=q_1''=q_1$, then clearly $(q_1',q_2')$ and $(q_1'',q_2'')$ will be irreconcilable over $a$. Note that since $c\notin \dom(q_2)$, we already know that $q_2'$ and $q_2''$ are well-defined partial functions, and by choice of $c^-$ and $c^+$, they are both injective, so it is enough to show that they preserve the (non-strict) order.
		
		Take any $b\geq c$, $b\in \dom(q_2)$; note that this implies that $b>c$, since $c\notin \dom(q_2)$. We need to show that $q_2(b)\geq c^+,c^-$. We know that $q_2(b)\neq c$ (because $c\notin\range(q_2)$) and of course $q_2(b)\in B$, so we have $q_2(b)\notin [c^-,c^+]$. It follows that it is enough to show that $q_2(b)\geq c^-$. Suppose towards contradiction that $q_2(b)<c^-$. Let $d\in L$ be such that $g_2:=q_2\cup \{(c,d)\}$ is a partial automorphism of $L$ (which we have by Proposition~\ref{proposition:automorphism_extends}). Since we have $c < b$, we have $d < q_2(b)<c^-<c$, so $d<c$. As before, we have $C(d,q_1)\leq C(c,q_1)-1$ and $C(d,g_2)\leq C(d,q_2)+1\leq C(c,q_2)-1+1$ (for the last inequality, note that $q_2(b)$ is counted in $C(c,q_2)$, but not in $C(d,q_2)$). Clearly, $m(q_1,g_2)\leq d$, so we have $C_m(q_1,g_2)\leq C(d,q_1)+C(d,g_2)<C(c,q_1)+C(c,q_2)$, a contradiction.
		
		Now, given some $b\leq c$ with $b\in \dom(q_2)$, we need to show that $q_2(b)\leq c^+,c^-$. As before, it is enough to show that $q_2(b)\leq c^+$. Arguing by contradiction, suppose $q_2(b)>c^+$. Using Proposition~\ref{proposition:automorphism_extends}, we can find $d$ such that $g_2:=q_2\cup\{(d,c)\}$ is a partial automorphism. But then, since $q_2(b)>c^+>c$, we have that $c>b=g_2^{-1}(q_2(b))>g_2^{-1}(c)=d$. Arguing as in the preceding paragraph, we contradict the minimality of $(q_1,q_2)$.
	\end{proof}
	
	\begin{lemma}
		\label{lemma:linear_union}
		Let $T$ be a meet-tree. Suppose $f,g$ are partial automorphisms of $T$ such that $\dom(f)$ and $\dom(g)$ are closed under $\meet$, and satisfy the condition that for every $\eta\in \dom(f)$, there is some $a_\eta\in \dom(g)$ such that $a_\eta\geq \eta$ and $g\restriction_{T_{\leq a_\eta}}\subseteq f$.
		
		Then $f\cup g$ is a partial automorphism of $T$ (whose domain is closed under $\meet$).
	\end{lemma}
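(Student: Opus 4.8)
Set $h := f\cup g$. The plan is to check three things: that $h$ is a well-defined partial function; that its domain $\dom(f)\cup\dom(g)$ is closed under $\meet$; and that $h$ is injective and preserves $\meet$. Since we are in the language of meet-trees and $\dom(h)$ will be $\meet$-closed, these last two properties suffice to make $h$ a partial automorphism: order is recovered from $\eta\le\zeta\iff \eta\meet\zeta=\eta$, so injectivity plus $\meet$-preservation give preservation of $\le$ in both directions, and hence of the full quantifier-free type. Well-definedness is immediate from the hypothesis: if $\eta\in\dom(f)\cap\dom(g)$, then picking $a_\eta\in\dom(g)$ with $a_\eta\geq\eta$ and $g\restriction T_{\leq a_\eta}\subseteq f$, the point $\eta$ itself lies in $T_{\leq a_\eta}$, so $f(\eta)=g(\eta)$; thus $f$ and $g$ agree on the overlap of their domains.

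Next, reduce to mixed pairs. For $\eta,\zeta$ both in $\dom(f)$ (or both in $\dom(g)$), $\meet$-preservation and injectivity on the pair hold because $f$ (resp.\ $g$) is a partial automorphism whose domain is $\meet$-closed, and $h$ agrees with it there. So fix $\eta\in\dom(f)$, $\zeta\in\dom(g)$ with $\eta\neq\zeta$, and write $a:=a_\eta$, so $a\in\dom(g)$, $\eta\leq a$, and $g\restriction T_{\leq a}\subseteq f$. The key dichotomy comes from comparing $\eta$ with $a\meet\zeta$: both lie in $T_{\leq a}$, which is linearly ordered, so they are comparable. In \emph{Case (a)}, $\eta\leq a\meet\zeta$: then $\eta\leq a\meet\zeta\leq\zeta$, so $\eta\meet\zeta=\eta\in\dom(f)$; moreover $a\meet\zeta\in\dom(g)$ (as $\dom(g)$ is $\meet$-closed) and $a\meet\zeta\leq a$, so $a\meet\zeta\in\dom(f)$ with $f(a\meet\zeta)=g(a\meet\zeta)$; chaining $f(\eta)\leq f(a\meet\zeta)=g(a\meet\zeta)\leq g(\zeta)$ gives $h(\eta)\leq h(\zeta)$, hence $h(\eta)\meet h(\zeta)=h(\eta)=h(\eta\meet\zeta)$. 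In \emph{Case (b)}, $a\meet\zeta<\eta$: a short computation with the characterisation of $\meet$ as the largest common lower bound (using $\eta\leq a$; cf.\ Fact~\ref{fact:lemmaxyz}) shows $\eta\meet\zeta=a\meet\zeta=:w$, and $w\leq a$, so $w\in\dom(f)\cap\dom(g)$ with $f(w)=g(w)$; also $a\leq a$ gives $a\in\dom(f)$ with $f(a)=g(a)$. In $\dom(f)$ we have $w\leq\eta\leq a$, so $f(\eta)\leq f(a)=g(a)$, while in $\dom(g)$, from $w=a\meet\zeta$ we get $g(w)=g(a)\meet g(\zeta)$. Hence $h(\eta)\meet h(\zeta)=f(\eta)\meet g(\zeta)\leq g(a)\meet g(\zeta)=g(w)$, and $g(w)=f(w)\leq f(\eta)$ together with $g(w)\leq g(\zeta)$ gives the reverse inequality, so $h(\eta)\meet h(\zeta)=g(w)=h(\eta\meet\zeta)$.

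Finally, observe that in both cases we have exhibited $\eta\meet\zeta\in\dom(h)$, so $\dom(h)$ is $\meet$-closed, and we have shown $h$ preserves $\meet$ on every pair; injectivity on mixed pairs follows by tracing the equality case $h(\eta)=h(\zeta)$ back through $h(\eta)\meet h(\zeta)=h(\eta\meet\zeta)$ and the injectivity of $f$ and $g$ (in each case this forces $\eta=\eta\meet\zeta=\zeta$). Therefore $h=f\cup g$ is a partial automorphism of $T$ with $\meet$-closed domain. I expect the only genuine work to be the elementary tree identities in Case~(b)—in particular that $a\meet\zeta<\eta$ and $\eta\le a$ force $\eta\meet\zeta=a\meet\zeta$—and the routine but slightly fiddly bookkeeping confirming that the equality cases needed for injectivity really do collapse to $\eta=\zeta$.
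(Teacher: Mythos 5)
Your proof is correct in its essentials but takes a genuinely different route from the paper's. The paper exploits a symmetry you do not use: it observes that the hypotheses of the lemma are invariant under replacing $(f,g)$ by $(f^{-1},g^{-1})$, so that $h^{-1}=f^{-1}\cup g^{-1}$ is automatically a well-defined partial function (hence $h$ is injective) and the implication $\eta\le\nu\Rightarrow h(\eta)\le h(\nu)$ automatically yields its converse; it is then left to verify only one direction of order preservation together with meet-preservation, which it does by always reducing to the point $a_\eta\meet\nu\in\dom(f)\cap\dom(g)$ via Fact~\ref{fact:lemmaxyz}. You instead split into the two cases $\eta\le a_\eta\meet\zeta$ and $a_\eta\meet\zeta<\eta$, prove meet-preservation directly in each, and recover order from $\eta\le\zeta\iff\eta\meet\zeta=\eta$, which leaves you with a separate injectivity check at the end. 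The paper's symmetry observation is shorter because injectivity and the reverse implication come for free; your approach is more explicit and elementary but has to pay for injectivity by hand. On that last point, your slogan ``$h(\eta)=h(\zeta)$ forces $\eta=\eta\meet\zeta=\zeta$'' is not quite right in Case~(a): there $\eta\meet\zeta=\eta$ already holds, so the identity $h(\eta\meet\zeta)=h(\eta)\meet h(\zeta)$ gives nothing new, and $\eta$ itself need not lie in $\dom(g)$, so you cannot directly invoke injectivity of $g$. The fix is precisely the chain you already wrote: if $h(\eta)=h(\zeta)$, then all of $f(\eta)\le f(a_\eta\meet\zeta)=g(a_\eta\meet\zeta)\le g(\zeta)$ collapse to equalities, injectivity of $f$ forces $\eta=a_\eta\meet\zeta\in\dom(g)$, and then injectivity of $g$ gives $\eta=\zeta$. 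In Case~(b) the mechanism works as you describe, since there $\eta\meet\zeta=a_\eta\meet\zeta<\eta$ lies in $\dom(f)$, so $f(\eta\meet\zeta)=f(\eta)$ would directly contradict the injectivity of $f$.
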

	
	\begin{proof}
		Put $h:=f\cup g$. Note that if $\eta\in \dom(f)\cap \dom(g)$, then $g\restriction_{T_{\leq a_\eta}}\subseteq f$, so in particular, $f(\eta)=g(\eta)$, so $h$ is a well-defined partial function. Note also that given $\eta\in \dom(f)$, we have $h\restriction_{T_{\leq a_\eta}}\subseteq f$.
		
		\begin{claim*}
			The domain $\dom(h)$ is closed under $\meet$.
		\end{claim*}
		\begin{clmproof}
			Since $\dom(f)$ and $\dom(g)$ are closed under $\meet$, it is enough to show that if $\eta\in \dom(f)$ and $\nu\in \dom(g)$, then $\eta\meet \nu\in \dom(h)$.
			
			If $\eta\meet\nu=\eta$, then there is nothing to prove. Otherwise, $\eta\leq a_\eta\in \dom(g)$, so $a_\eta\meet\eta=\eta>\eta\meet \nu$. By Fact~\ref{fact:lemmaxyz}, we infer that $\eta\meet \nu=a_\eta\meet \nu$; since $a_\eta\in \dom(g)$, the conclusion follows.
		\end{clmproof}
		We will show that $h$ is an isomorphism between substructures of $T$. To that end, we need to show that $h$ is injective and that for any $\eta,\nu\in\dom(h)$, we have $\eta\leq \nu$ if and only if $h(\eta)\leq h(\nu)$, and that $h(\eta\meet\nu)=h(\eta)\meet h(\nu)$.
		
		Notice that $f^{-1}$ and $g^{-1}$ satisfy the assumptions of the proposition we are proving: indeed, given $\eta'=f(\eta)\in \dom(f^{-1})$, we have some $a_\eta\in \dom(g)$ with $a_\eta\geq \eta$ and $g\restriction_{T_{\leq a_\eta}}\subseteq f$. But this clearly implies that $f(a_\eta)=g(a_\eta)\geq\eta'$ and $g^{-1}\restriction_{T_{\leq f(a_\eta)}}\subseteq f^{-1}$. Thus by the first paragraph of this proof, $h^{-1}=f^{-1}\cup g^{-1}$ is a well-defined partial function, so $h$ is injective. By the same token, it is enough to show that $h$ preserves meets and that $\eta\leq\nu$ implies $h(\eta)\leq h(\nu)$ (i.e.\ the converse will follow).
		
		Now, let us fix arbitrary $\eta,\nu\in \dom(h)$ such that $\eta\leq \nu$. We need to show that $h(\eta)\leq h(\nu)$. If $\nu\in \dom(f)$, then it easily follows that $\eta\in \dom(f)$, so it is enough to consider the case when $\eta\in \dom(f)$ and $\nu\in \dom(g)$. It follows that $a_\eta\meet\nu\in \dom(g)$, so in fact $a_\eta\meet\nu\in \dom(f)\cap\dom(g)$ (because $a_\eta\meet \nu\leq a_\eta$); since trivially $a_\eta\meet\nu\geq \eta$, we have:
		\[
		h(\nu)=g(\nu)\geq g(a_\eta\meet \nu)=h(a_\eta\meet \nu)=f(a_\eta\meet \nu)\geq f(\eta)=h(\eta).
		\]
		
		Now, we need to show that $h(\eta\meet \nu)=h(\eta)\meet h(\nu)$. We may assume that $\eta\in \dom(f)$, $\nu\in\dom(g)$, and also that $\eta>\eta\meet\nu$ (if we have equality, then $\eta\leq \nu$, and the conclusion follows from the preceding paragraph). Then $\eta=a_\eta\meet\eta>\eta\meet\nu$, so (by Fact~\ref{fact:lemmaxyz}) $\eta\meet\nu=a_\eta\meet\nu$. As in the preceding paragraph, we have that $a_\eta\meet\nu\in \dom(f)\cap \dom(g)$. Thus, we have $f(a_\eta)\meet f(\eta)=f(\eta)>f(\eta\meet\nu)$. On the other hand $g(\eta\meet\nu)=g(a_\eta\meet\nu)=g(a_\eta)\meet g(\nu)$. In conclusion, we have $h(a_\eta)\meet h(\eta)>h(\eta\meet\nu)=h(a_\eta)\meet h(\nu)$. It follows by Fact~\ref{fact:lemmaxyz} that $h(\eta\meet\nu)=h(a_\eta)\meet h(\nu)=h(\eta)\meet h(\nu)$, so we are done.
	\end{proof}
	
	\begin{corollary}
		\label{corollary:nonamalgamation}
		Suppose $p_1,p_2$ are finite partial automorphisms of a dense, unrooted meet-tree $M$, such that for some $a\in M$, we have $p_1(a)=p_2(a)<a$. Then $(p_1,p_2)$ admits extensions $(p_1',p_2')$ and $(p_1'',p_2'')$ which are irreconcilable over $a$.
	\end{corollary}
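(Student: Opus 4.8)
The plan is to reduce to the branch $L := M_{\leq a}$, which is a dense linear order unbounded from below, apply Proposition~\ref{proposition:daoudthesis} there, and then glue the resulting linear extensions back onto $p_1,p_2$ using Lemma~\ref{lemma:linear_union}. To begin: by semilinearity $L = M_{\leq a}$ is linearly ordered, and since $M$ is dense and unrooted it is a dense linear order unbounded from below. Replacing $p_1,p_2$ by extensions if necessary (which is harmless, since we are only asked to produce extensions), we may assume $\dom(p_1)$ and $\dom(p_2)$ are closed under $\meet$. Write $a':=p_1(a)=p_2(a)<a$. Since $p_i$ preserves the order and $p_i(a)=a'$, any $x\in\dom(p_i)$ with $x\leq a$ satisfies $p_i(x)\leq a'\leq a$; hence $q_i:=p_i\restriction(\dom(p_i)\cap L)$ is a finite partial automorphism of the linear order $L$, with $a\in\dom(q_i)$ and $q_i(a)=a'<a$.

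Applying Proposition~\ref{proposition:daoudthesis} to $L$, $a$ and $q_1,q_2$ yields extensions $(q_1',q_2')$ and $(q_1'',q_2'')$ of $(q_1,q_2)$ to pairs of finite partial automorphisms of $L$ that are irreconcilable over $a$. Since $q_i'$ and $q_i''$ extend $q_i$, they still fix the value $a'$ at $a$, and, being order-preserving, they map all of $L$ into $M_{\leq a'}$. Set $p_i':=p_i\cup q_i'$ and $p_i'':=p_i\cup q_i''$. I claim these are finite partial automorphisms of $M$ extending $p_i$. Indeed, Lemma~\ref{lemma:linear_union} applies with $f:=q_i'$ — whose domain lies in the chain $L$ and hence is closed under $\meet$ — and $g:=p_i$: for every $\eta\in\dom(q_i')$ we may take $a_\eta:=a\in\dom(p_i)$, which satisfies $a_\eta\geq\eta$ and $p_i\restriction M_{\leq a_\eta}=q_i\subseteq q_i'$. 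So $p_i'=q_i'\cup p_i$ is a partial automorphism of $M$, and likewise for $p_i''$; both evidently extend $p_i$.

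It remains to check that $(p_1',p_2')$ and $(p_1'',p_2'')$ are irreconcilable over $a$. Suppose not: then there are a partial order $Q$, partial automorphisms $f_1,f_2$ of $Q$, a point $b\in Q$, and embeddings $j'\colon(M,p_1',p_2',a)\hookrightarrow(Q,f_1,f_2,b)$ and $j''\colon(M,p_1'',p_2'',a)\hookrightarrow(Q,f_1,f_2,b)$. Restricting $j'$ and $j''$ to $L=M_{\leq a}$ and using $q_i'\subseteq p_i'$, $q_i''\subseteq p_i''$, we obtain embeddings $(L,q_1',q_2',a)\hookrightarrow(Q,f_1,f_2,b)$ and $(L,q_1'',q_2'',a)\hookrightarrow(Q,f_1,f_2,b)$ with the same distinguished point $b$ — contradicting the irreconcilability of $(q_1',q_2')$ and $(q_1'',q_2'')$ over $a$. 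Hence $(p_1',p_2')$ and $(p_1'',p_2'')$ are irreconcilable over $a$, as desired. Two points deserve care: this last descent uses that in the definition of irreconcilability the ambient object is allowed to be an arbitrary partial order, not necessarily a tree; and the application of Lemma~\ref{lemma:linear_union} — the crux of the argument — relies on the single element $a$ serving as the witness $a_\eta$ for every $\eta\in\dom(q_i')$, which holds precisely because $q_i'$ extends $q_i=p_i\restriction M_{\leq a}$ and $\dom(q_i')$ lies below $a$ in the chain $L$.
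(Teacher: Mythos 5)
Your proof is correct and follows essentially the same route as the paper: restrict to the branch $L = M_{\leq a}$, apply Proposition~\ref{proposition:daoudthesis} there, and glue the resulting linear extensions back onto $p_i$ via Lemma~\ref{lemma:linear_union} with $a_\eta := a$ for all $\eta$. You have in fact been a bit more careful than the paper in two respects: you explicitly reduce to the case where $\dom(p_i)$ is closed under $\meet$ (which Lemma~\ref{lemma:linear_union} technically requires but the paper leaves implicit), and you spell out the final descent that shows irreconcilability passes up from $(q_i',q_i'')$ to $(p_i',p_i'')$, where the paper just says ``clearly''.
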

	\begin{proof}
		Note that since $M$ is dense and unrooted, $M_{\leq a}$ is a dense linear ordering, without lower bound.
		
		For $i=1,2$, write $q_i:=p_i\restriction_{M_{\leq a}}$, and let $q_i',q_i''$ be the extensions given by Proposition~\ref{proposition:daoudthesis} (to partial automorphisms of $M_{\leq a}$), and put $p_i':=p_i\cup q_i'$, $p_i'':=p_i\cup q_i''$. Then by Lemma~\ref{lemma:linear_union} (with $f=q_i'$ or $f=q_i''$, $g=p_i$ and $a_\eta:=a$ for all $\eta$), $p_i'$ and $p_i''$ are partial automorphisms, and clearly, $(p_1',p_2')$ and $(p_1'',p_2'')$ are irreconcilable over $a$.
	\end{proof}
	
	\begin{corollary}
		\label{corollary:no_EAP2}
		If $M$ is a dense and unrooted meet-tree and $\cK:=\Age(M)$, then $\cK^2$ (the class of $\cK$-structures with pairs of partial automorphisms) does not have the EAP (see Definition~\ref{definition:eap}).
	\end{corollary}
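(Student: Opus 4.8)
The plan is to unwind the definition of EAP and reduce everything to Corollary~\ref{corollary:nonamalgamation}. Note that EAP for $\cK^2$ has the logical shape $\exists B\,\forall (C,D)\,\exists E$, so to refute it I must exhibit a single $(A,\bar p)\in\cK^2$ for which \emph{every} candidate $(B,\bar q)$ fails. I would take $A$ to be the two-element chain $a'<a$ and put $p_1=p_2=\{(a,a')\}$; this $A$ is a finite meet-tree of arity $1$, so $A\in\cK$ regardless of the bound on arity, and since any two singletons have the same quantifier-free type, $\bar p=(p_1,p_2)$ is a legitimate pair of partial automorphisms. Thus $(A,\bar p)\in\cK^2$.

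Next I would fix an arbitrary $(B,\bar q)\in\cK^2$ and an embedding $i_{AB}\colon(A,\bar p)\to(B,\bar q)$, and set $\hat a:=i_{AB}(a)$; being an embedding of $\cK^2$-structures, $i_{AB}$ forces $q_1(\hat a)=q_2(\hat a)=i_{AB}(a')<\hat a$. Since $B\in\cK=\Age(M)$, I fix an embedding $e\colon B\to M$; then $\tilde q_i:=e\circ q_i\circ e^{-1}$ are finite partial automorphisms of the dense unrooted meet-tree $M$ with $\tilde q_1(\alpha)=\tilde q_2(\alpha)<\alpha$, where $\alpha:=e(\hat a)$. Applying Corollary~\ref{corollary:nonamalgamation} to $M$, $\tilde q_1,\tilde q_2,\alpha$ yields extensions $(\tilde q_1',\tilde q_2')$ and $(\tilde q_1'',\tilde q_2'')$ by finite partial automorphisms of $M$ that are irreconcilable over $\alpha$. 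I would then let $C$ be the finite substructure of $M$ generated by $e(B)$ together with the domains and ranges of $\tilde q_1',\tilde q_2'$, and define $D$ analogously from $\tilde q_1'',\tilde q_2''$. Then $(C,\tilde q_1',\tilde q_2')$ and $(D,\tilde q_1'',\tilde q_2'')$ lie in $\cK^2$, and $e$ embeds $(B,\bar q)$ into each (since $\tilde q_i'\supseteq\tilde q_i$ and $\tilde q_i''\supseteq\tilde q_i$); these give the two required extensions $i_{BC},i_{BD}$.

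The crux will be to check that no $(E,t_1,t_2)\in\cK^2$ amalgamates $(C,\tilde q_1',\tilde q_2')$ and $(D,\tilde q_1'',\tilde q_2'')$ over the image of $A$. Supposing $i_{CE},i_{DE}$ do so, with $i_{CE}\circ i_{BC}\circ i_{AB}=i_{DE}\circ i_{BD}\circ i_{AB}$, and letting $b$ be the common value of the two sides at $a$ (so $b=i_{CE}(\alpha)=i_{DE}(\alpha)$), we would get that $(C,\tilde q_1',\tilde q_2',\alpha)$ and $(D,\tilde q_1'',\tilde q_2'',\alpha)$ both embed into $(E,t_1,t_2,b)$. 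Because the domains and ranges of $\tilde q_i'$ (resp.\ $\tilde q_i''$) all lie inside $C$ (resp.\ $D$), I can then glue one copy of $M$ onto $E$ along $i_{CE}$ and another along $i_{DE}$: the resulting poset $Q$ carries partial automorphisms $f_1,f_2$ extending $t_1,t_2$ by $\tilde q_i'$ on the first copy and $\tilde q_i''$ on the second (compatibility is exactly the statement that $i_{CE},i_{DE}$ are $\cK^2$-embeddings). This makes $(M,\tilde q_1',\tilde q_2',\alpha)$ and $(M,\tilde q_1'',\tilde q_2'',\alpha)$ both embed into $(Q,f_1,f_2,b)$, contradicting irreconcilability over $\alpha$. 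Hence $\cK^2$ fails EAP.

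The only delicate point I anticipate is this last piece of bookkeeping: lining up the $\exists B\,\forall (C,D)\,\exists E$ quantifiers of EAP with the ``no common target'' form of irreconcilability, and in particular passing between the finite $C,D$ that occur in an honest $\cK^2$-amalgam and the ambient $M$ appearing in Corollary~\ref{corollary:nonamalgamation} (handled by the gluing construction, which works because embeddings of posets reflect the order, so no cycles are created). The substantive combinatorics — the minimality/orbit argument and the meet-tree gluing lemma — has already been done in Proposition~\ref{proposition:daoudthesis} and Lemma~\ref{lemma:linear_union}, so no new work is needed there.
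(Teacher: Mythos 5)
Your proposal is correct and follows the same route as the paper: take $A$ to be a two-element chain $a' < a$ with $p_1 = p_2 = \{(a,a')\}$, observe that any $\cK^2$-extension $(B,\bar q)$ of $(A,\bar p)$ satisfies the hypothesis of Corollary~\ref{corollary:nonamalgamation} after embedding $B$ into $M$, and conclude failure of EAP from the two irreconcilable extensions. The paper's proof stops at ``This clearly implies the failure of EAP''; what you do additionally is spell out that implication — in particular the step of gluing two copies of $M$ onto a hypothetical finite $\cK^2$-amalgam $E$ along $C$ and $D$ to obtain the poset $Q$ that irreconcilability forbids. Your justification that the glued poset has no cycles is stated loosely (``embeddings of posets reflect the order''), but it is in fact correct: in the free amalgam of posets over a common substructure, any alternating cycle has all its switch points in the common part, where antisymmetry then kills it; so the pushout is a genuine poset and $E\hookrightarrow Q$ is an embedding. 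This is exactly the bookkeeping the paper leaves implicit, so your proof does not differ from theirs in substance.
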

	\begin{proof}
		Since $M$ is unrooted, in particular, it contains two elements $a>b$ and $p_1^0=p_2^0=\{(a,b) \}$ is a partial automorphism. Then any extension $(p_1,p_2)$ of $(p_1^0,p_2^0)$ satisfies the hypothesis of Corollary~\ref{corollary:nonamalgamation}, so it admits two extensions irreconcilable over $a$. This clearly implies the failure of EAP.
	\end{proof}
	
	The following corollary is the second half of the \hyperref[mainthm]{Main Theorem} (the first half we will prove later in Theorem~\ref{theorem:gen_automorphism_exists}).
	\begin{corollary}
		\label{corollary:no_generic_pair}
		If $\mathbf K$ is one of $\bT$ or $\bT_k$ for $k>0$ (in particular, if it is the dense linear ordering), then $\mathbf K$ does not admit a generic pair of automorphisms.
	\end{corollary}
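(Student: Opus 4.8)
The plan is to deduce this directly from Corollary~\ref{corollary:no_EAP2} via the criterion of Fact~\ref{fact:amalgam_and_generics}. By that fact, $\Aut(\mathbf K)$ has a generic pair of automorphisms if and only if $\cK^2$ has JEP and EAP, where $\cK=\Age(\mathbf K)$; and Corollary~\ref{corollary:no_EAP2} says that $\cK^2$ fails EAP as soon as $\mathbf K$ is a dense and unrooted meet-tree. So it suffices to check that each of $\bT$ and $\bT_k$ (for $k>0$) is a dense, unrooted meet-tree.

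Each is a meet-tree by Fact~\ref{fact:generic_trees}, and in the case $k=1$ the linear order $\bT_1\cong(\bQ,\leq)$ is a meet-tree with $a\meet b=\min(a,b)$. Density and unrootedness follow from ultrahomogeneity (equivalently, the extension property of Fraïssé limits), using only that finite chains belong to $\cK$ --- which holds because a chain has arity $1$, hence is at most $k$-ary for every $k\geq 1$. Concretely: given $a\in\mathbf K$, I would embed the one-element structure $\{a\}$ into the two-element chain and extend the resulting embedding to an embedding of the chain into $\mathbf K$, obtaining some $b<a$, so $\mathbf K$ has no root; and given $a<b$ in $\mathbf K$, I would embed the two-element chain $\{a,b\}$ into the three-element chain $a_0<a_1<a_2$ (via $a\mapsto a_0$, $b\mapsto a_2$) and extend to an embedding into $\mathbf K$, obtaining some $c$ with $a<c<b$, so $\mathbf K$ is dense. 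Then Corollary~\ref{corollary:no_EAP2} applies to $M=\mathbf K$.

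I do not expect any real obstacle here: all the substance lies in Corollary~\ref{corollary:no_EAP2} and, behind it, in Proposition~\ref{proposition:daoudthesis}, Corollary~\ref{corollary:nonamalgamation} and Lemma~\ref{lemma:linear_union}. The only point requiring a moment's care is the uniform treatment of the arity parameter, i.e.\ noticing that the finite chains used to witness density and unrootedness are admissible in every class $\Age(\bT_k)$ as well as in $\Age(\bT)$, so that the single argument covers $\bT$, all the $\bT_k$, and in particular the dense linear order $\bT_1$.
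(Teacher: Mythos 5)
Your proof is correct and follows the same route as the paper: deduce the failure of EAP for $\cK^2$ from Corollary~\ref{corollary:no_EAP2} and then apply Fact~\ref{fact:amalgam_and_generics}. The paper leaves the density and unrootedness of $\bT$ and $\bT_k$ implicit, whereas you spell them out via the extension property using finite chains (which, being $1$-ary, lie in every $\Age(\bT_k)$); this is a harmless and reasonable addition.
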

	\begin{proof}
		In each case, $\cK:=\Age(\mathbf K)$ is a Fraïssé class with limit $\mathbf K$. By Fact~\ref{fact:amalgam_and_generics} and Corollary~\ref{corollary:no_EAP2}, it follows that the limit of $\cK$ (i.e.\ $\mathbf K$) does not admit a generic pair of automorphisms.
	\end{proof}
	
	\section{Determined partial automorphisms}
	\label{section:determined partial automorphisms}
	We aim to show that the universal countable meet-tree admits a generic automorphism (even though, by Corollary~\ref{corollary:no_generic_pair}, we already know it does not admit a generic pair of automorphisms). Very broadly, the proof follows \cite{KT01}. More precisely, we will find a sufficient condition for a partial automorphism to be an amalgamation base in the class $\mathcal K^1$ (where $\mathcal K$ is the class of finite meet-trees), and in the next section, we will find a cofinal class of automorphisms satisfying this condition, thus showing CAP for $\mathcal K^1$. This, in conjunction with Corollary~\ref{corollary:sufficient_for_generics}, will give us the existence of generics.
	
	\subsection*{Determined partial automorphisms in an abstract context}
	The notion of a strict extension and a determined automorphism is due to \cite{KT01}. We have slightly modified it: the authors of \cite{KT01} do not ask that the domain of a strict extension is a substructure, which is a trivial requirement in the case of relational structures. We also introduce the notion of a strictly positive extension.	
	\begin{definition}
		\label{definition:determined_KT}
		Let $M$ be a first order structure, and let $p$ be a finite partial automorphism of $M$.
		\begin{itemize}[nosep]
			\item
			We say that an extension $f\supseteq p$ of partial automorphisms of $M$ is \emph{strict} if it is an automorphism of a substructure (i.e.\ $\dom(f)=\range(f)$ is a substructure of $M$) and $\dom(f)$ is generated by the $f$-orbits of elements of $\dom(p)$.
			\item
			We say that an extension $f\supseteq p$ of partial automorphisms of $M$ is \emph{positively strict} if $f$ is an endomorphism of a substructure (i.e.\ $\dom(f)\supseteq \range(f)$ and $\dom(f)$ is a substructure of $M$) and $\dom(f)$ is generated by the \emph{positive} $f$-orbits of elements of $\dom(p)$ (i.e.\ the images of $\dom(p)$ by the positive powers of $f$).
			\item
			Given two extensions $f_1,f_2$ of $p$, we say that $f_1$ and $f_2$ are \emph{isomorphic over $p$} if there is an isomorphism $\theta\colon \dom(f_1)\to \dom(f_2)$ fixing $\dom(p)$ pointwise, such that $\theta\circ f_1=f_2\circ \theta$ (note that we do \emph{not} require that $\theta$ extends to an automorphism of $M$).
			\item
			We say that $p$ is \emph{[positively] determined} if, up to isomorphism over $p$, it admits a unique [positively] strict extension.\xqed{\lozenge}
		\end{itemize}
	\end{definition}
	Recall that we want to find a cofinal class of amalgamation bases in $\cK^1$. Proposition~\ref{proposition:amalgamation_implies_determined} and Lemma~\ref{lemma:determined_is_amalgamation_base} show that under reasonable assumptions (which, as we will see in Lemma~\ref{lemma:AP_in_trees_with_automorphisms}, are satisfied by the class of meet-trees), the notions of a determined automorphism and an amalgamation base in the class $\cK^1$ are essentially equivalent.
	\begin{proposition}
		\label{proposition:amalgamation_implies_determined}
		Suppose $\cK$ is a Fraïssé class with Fraïssé limit $\mathbf K$, and suppose $p$ is a partial automorphism of $\mathbf K$ such that for the structure $B\leq \mathbf K$ generated by $\dom(p)\cup\range(p)$, we have that $(B,p)$ is an amalgamation base in $\cK^1$. Then $p$ is determined.
	\end{proposition}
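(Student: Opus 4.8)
The plan is to verify the definition of ``determined'' directly: I must show that any two strict extensions $f_1\supseteq p$ and $f_2\supseteq p$ are isomorphic over $p$. First note that if $\theta\colon\dom(f_1)\to\dom(f_2)$ is an isomorphism fixing $\dom(p)$ pointwise with $\theta\circ f_1=f_2\circ\theta$, then for every $a\in\dom(p)$ and $k\in\mathbf Z$ an easy induction gives $\theta(f_1^k(a))=f_2^k(a)$; since $\dom(f_i)$ is, by strictness, generated by the $f_i$-orbits $\{f_i^k(a):a\in\dom(p),\ k\in\mathbf Z\}$, the map $\theta$ is completely determined by this, and conversely an isomorphism over $p$ intertwining $f_1$ and $f_2$ exists if and only if the assignment $f_1^k(a)\mapsto f_2^k(a)$ is a well-defined bijection between these two generating sets which preserves quantifier-free types (such a map then extends uniquely to an isomorphism of the generated substructures, and this extension automatically intertwines $f_1,f_2$ and fixes $\dom(p)$, hence all of $B$). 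So the statement reduces to proving: for every finite family $(a_i,k_i)_{i<m}$ with $a_i\in\dom(p)$, $k_i\in\mathbf Z$,
\[
\qftp\big((f_1^{k_i}(a_i))_{i<m}\big)=\qftp\big((f_2^{k_i}(a_i))_{i<m}\big)
\]
(allowing the formulas $v_i=v_j$ among the quantifier-free formulas takes care of well-definedness and injectivity; surjectivity onto the generators of $\dom(f_2)$ is clear).

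To prove this, fix $N\ge\max_i|k_i|$, and for $j=1,2$ let $C_j\le\mathbf K$ be the finite substructure generated by $\{f_j^k(a):a\in\dom(p),\ |k|\le N\}$, and let $q_j\subseteq f_j$ be a partial automorphism of $C_j$ whose domain is an appropriate finite set of orbit elements and with $q_j(f_j^k(a))=f_j^{k+1}(a)$ for the relevant $k$; since $q_j$ is a restriction of $f_j$ it preserves quantifier-free types, and it extends $p$. The key point (see below) is that $q_j$ can be chosen so that the inclusion $B\le C_j$ is an embedding $(B,p)\hookrightarrow(C_j,q_j)$ in $\cK^1$. Granting this, the hypothesis that $(B,p)$ is an amalgamation base in $\cK^1$ yields some $(E,r)\in\cK^1$ together with embeddings $\iota_j\colon(C_j,q_j)\hookrightarrow(E,r)$ that agree on $B$. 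Now one inducts on $|k|$: using $r(\iota_j(f_j^k(a)))=\iota_j(f_j^{k+1}(a))$ (which holds since $\iota_j$ is an embedding in $\cK^1$ and $q_j$ has the stated values) together with injectivity of $r$, one shows $\iota_1(f_1^k(a))=\iota_2(f_2^k(a))$ for all $a\in\dom(p)$ and all $|k|\le N$. Consequently the tuples $(\iota_1(f_1^{k_i}(a_i)))_i$ and $(\iota_2(f_2^{k_i}(a_i)))_i$ are literally equal inside $E$, so they have the same quantifier-free type in $E$; since $\iota_j$ and the inclusion $C_j\le\mathbf K$ are embeddings, the quantifier-free type of $(f_j^{k_i}(a_i))_i$ in $\mathbf K$ equals its quantifier-free type in $E$, and the displayed equality follows.

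The step requiring real care is turning the finite approximations $(C_j,q_j)$ into members of $\cK^1$ which actually receive an embedding of $(B,p)$. This is not automatic: since $f_j$ is an automorphism of the substructure $\dom(f_j)$ and need not preserve $B$ setwise, the naïve restriction $f_j\restriction\{x\in C_j:f_j(x)\in C_j\}$ may relate elements of $B\setminus\dom(p)$ (for instance it sends $a\meet b$ to $f_j(a)\meet f_j(b)$, which lies in $B$ when $a,b\in\dom(p)$), and so may fail to restrict to exactly $p$ on $B$. One therefore has to take $\dom(q_j)$ to be a carefully chosen finite set of orbit elements — large enough to run the induction identifying the orbit points in the amalgam, yet small enough that $q_j$ introduces no new relations inside $B$; handling this interaction between the generated structure $B$ and the orbit segments is the main obstacle. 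Everything else is routine, relying only on the basic theory of Fraïssé limits and on the standard fact that a quantifier-free-type-preserving bijection between generating sets of two structures extends uniquely to an isomorphism of those structures.
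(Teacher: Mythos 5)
Your direct argument is correct, and it is essentially the contrapositive of the paper's sketch. The paper starts from two strict extensions $f_1,f_2\supseteq p$ that are \emph{not} isomorphic over $p$, restricts each to a suitable finite piece $f_i'$, and observes that $(C_1,f_1')$ and $(C_2,f_2')$ cannot be amalgamated over $(B,p)$. You run the amalgamation forward: you truncate each $f_j$ to a finite orbit segment, amalgamate over $(B,p)$, and walk up the orbits inside the amalgam (using injectivity of $r$ for the backward steps) to match quantifier-free types, from which the intertwining isomorphism is assembled. Both proofs hinge on the same finite approximation by orbit segments; yours is phrased in the direction that matches the statement, the paper's as a contrapositive. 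Neither buys much over the other, though yours spells out more of the bookkeeping.

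The ``main obstacle'' you flag at the end, however, is not actually an obstacle. In the Kechris--Rosendal / Ivanov framework --- and as the present paper uses it implicitly, e.g.\ in checking $j_i'\restriction_B\circ p\subseteq h'\circ j_i'\restriction_B$ in the proof of Lemma~\ref{lemma:determined_is_amalgamation_base} --- a $\cK^1$-morphism $(A,p)\to(A',p')$ is an embedding $\phi\colon A\to A'$ of the underlying $\cK$-structures satisfying $\phi\circ p\subseteq p'\circ\phi$; it is \emph{not} required to reflect the graph of the partial automorphism. Consequently the inclusion $B\hookrightarrow C_j$ is a $\cK^1$-embedding $(B,p)\hookrightarrow(C_j,q_j)$ as soon as $p\subseteq q_j$, which is automatic for any restriction $q_j\subseteq f_j$ whose domain contains $\dom(p)$. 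It is harmless if $q_j$ happens to move further elements of $B\setminus\dom(p)$ --- the amalgamation base hypothesis is applied with whatever $q_j$ you get, and the extra pairs cause no conflict (indeed, if $q_1$ and $q_2$ disagreed on a common point of $B$ mapped into $B$, the amalgam would already fail, contradicting the hypothesis). So no delicate choice of $\dom(q_j)$ is needed; once this is observed, your proof is complete as written.
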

	\begin{proof}
		This is not important for our applications, so we only sketch the proof. We can do it by contraposition. If $f_1,f_2\supseteq p$ are strict, not isomorphic over $p$, then there are some finite $f_i'\subseteq f_i$ ($i=1,2$) such that for no automorphism $\theta$ of $\mathbf K$ fixing $\dom(p)$ we have that $f_1'\cup \theta f_2'\theta^{-1}$ is a partial automorphism of $\mathbf K$. This easily implies that for the appropriate $C_1,C_2$, we cannot amalgamate $(C_1,f_1')$ and $(C_2,f_2')$ over $(B,p)$.
	\end{proof}
	\begin{lemma}
		\label{lemma:determined_is_amalgamation_base}
		Suppose $\cK$ is a Fraïssé class with Fraïssé limit $\mathbf K$. Consider the class $\overline{\cK}$ of structures (possibly not finitely generated) whose age is contained in $\cK$, and let $\overline{\cK}^1_\totalaut$ be the class of $\overline{\cK}$-structures equipped with a (total) automorphism.
		
		If $\overline{\cK}^1_\totalaut$ has the AP, then for every determined partial automorphism $p$ of $\mathbf K$, we have that if $B\subseteq \mathbf K$ is generated by $\dom(p)\cup \range(p)$, then $(B,p)$ is an amalgamation base in $\cK^1$.
	\end{lemma}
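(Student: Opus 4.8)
The plan is to reduce amalgamation in $\cK^{1}$ to amalgamation in $\overline{\cK}^{1}_{\totalaut}$, exploiting determinedness to guarantee that two ``totalisations'' of the given data can be glued along a large common part.

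Suppose then that we are given embeddings $i_{j}\colon(B,p)\hookrightarrow(C_{j},q_{j})$ in $\cK^{1}$, $j=1,2$, to be amalgamated over $(B,p)$. First I would realise both $(C_{j},q_{j})$ inside $\mathbf K$: since $C_{j}\in\Age(\mathbf K)$, pick an embedding $C_{j}\hookrightarrow\mathbf K$ and then, using ultrahomogeneity of $\mathbf K$ (Fact~\ref{fact:fraisse_theorem}), adjust it so that after identification $B\le C_{j}\le\mathbf K$, $i_{j}$ is the inclusion, and $q_{j}$ is a finite partial automorphism of $\mathbf K$ with $q_{j}\cap B^{2}=p$. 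A standard back-and-forth extends $q_{j}$ to some $\hat q_{j}\in\Aut(\mathbf K)$; here I would take care (see the last paragraph) to do this so that $\hat q_{j}\cap C_{j}^{2}=q_{j}$, that is, so that $\hat q_{j}$ creates no new ``internal edges'' of $C_{j}$. Let $E_{j}\le\mathbf K$ be the substructure generated by the $\hat q_{j}$-orbits of $\dom(p)$ and put $f_{j}:=\hat q_{j}\restriction E_{j}$; since $\hat q_{j}(E_{j})=E_{j}$, $f_{j}$ is an automorphism of $E_{j}$, hence a strict extension of $p$, and $\hat q_{j}\cap E_{j}^{2}=f_{j}$, so $(E_{j},f_{j})$ is a substructure of $(\mathbf K,\hat q_{j})$ in $\overline{\cK}^{1}_{\totalaut}$. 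Note also $B\le E_{j}$, since $\dom(p)\subseteq E_{j}$ and $\hat q_{j}(\dom(p))=q_{j}(\dom(p))=\range(p)\subseteq E_{j}$.

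Now determinedness enters: $f_{1}$ and $f_{2}$ are both strict extensions of $p$, so there is an isomorphism $\theta\colon E_{1}\to E_{2}$ fixing $\dom(p)$ pointwise with $\theta\circ f_{1}=f_{2}\circ\theta$, and a one-line computation from $f_{j}\restriction\dom(p)=p$ shows $\theta$ fixes all of $B$ pointwise. Hence $(E_{1},f_{1})$ and $(E_{2},f_{2})$ are isomorphic in $\overline{\cK}^{1}_{\totalaut}$ via $\theta$, and by the assumed AP of $\overline{\cK}^{1}_{\totalaut}$ we may amalgamate $(\mathbf K,\hat q_{1})$ and $(\mathbf K,\hat q_{2})$ over the span $(\mathbf K,\hat q_{1})\hookleftarrow(E_{1},f_{1})\xrightarrow{\theta}(E_{2},f_{2})\hookrightarrow(\mathbf K,\hat q_{2})$, obtaining $(D,\hat q)\in\overline{\cK}^{1}_{\totalaut}$ and embeddings $\psi_{j}\colon(\mathbf K,\hat q_{j})\to(D,\hat q)$ with $\psi_{1}\restriction E_{1}=\psi_{2}\circ\theta$; in particular $\psi_{1}\restriction B=\psi_{2}\restriction B$. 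Finally let $F\le D$ be the substructure generated by $\psi_{1}(C_{1})\cup\psi_{2}(C_{2})$; it is finitely generated, so $F\in\Age(D)\subseteq\cK$, and with the induced partial automorphism $r:=\hat q\cap F^{2}$ we get $(F,r)\in\cK^{1}$. Since each $\psi_{j}$ is an embedding of $\overline{\cK}^{1}_{\totalaut}$ we have $\hat q\cap\psi_{j}(\mathbf K)^{2}=\psi_{j}(\hat q_{j})$, hence $r\cap\psi_{j}(C_{j})^{2}=\psi_{j}(\hat q_{j}\cap C_{j}^{2})=\psi_{j}(q_{j})$; combined with $\psi_{1}\restriction B=\psi_{2}\restriction B$ this says precisely that $\psi_{1}\restriction C_{1}$ and $\psi_{2}\restriction C_{2}$ are embeddings $(C_{j},q_{j})\hookrightarrow(F,r)$ in $\cK^{1}$ whose composites with $i_{j}$ coincide, so $(B,p)$ is an amalgamation base in $\cK^{1}$.

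The only non-formal point — the one I expect to be the main obstacle — is producing the total extensions $\hat q_{j}\in\Aut(\mathbf K)$ satisfying $\hat q_{j}\cap C_{j}^{2}=q_{j}$. In the back-and-forth, each time one must assign an $\hat q_{j}$-image (or preimage) to an element lying outside the current domain (resp.\ range), one realises a prescribed one-point type over a finite subset of $\mathbf K$, and it suffices to choose a realisation avoiding the finite set $C_{j}$; this is possible because that type is non-algebraic — equivalently, because algebraic closure in $\mathbf K$ does not reach beyond the generated substructure, which for $\bT$ and each $\bT_{k}$ is immediate from the description of quantifier-free types in meet-trees (Fact~\ref{fact:typesintrees}). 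Everything else — that each $f_{j}$ is a strict extension, that $\theta$ fixes $B$, that $\psi_{1}\restriction B=\psi_{2}\restriction B$, and that $\psi_{j}\restriction C_{j}$ respects the partial automorphism — is a routine unwinding of the definitions.
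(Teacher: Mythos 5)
Your overall strategy matches the paper's exactly: realise the two sides inside $\mathbf K$, extend to automorphism(s), restrict to the substructure generated by the orbit of $\dom(p)$, invoke determinedness to identify the two resulting strict extensions, amalgamate in $\overline{\cK}^1_\totalaut$, then cut back down to a finitely generated amalgam. That part is fine.

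The step you flag as ``the main obstacle'' --- arranging $\hat q_j\cap C_j^2=q_j$ --- is both unnecessary and, as you propose to achieve it, actually false. It is unnecessary because in the Kechris--Rosendal framework that the paper is invoking (Fact~\ref{fact:amalgam_and_generics}, citing \cite{KR07}), a morphism $(A,p)\to(D,q)$ in $\cK^1$ is a $\cK$-embedding $\iota$ with $\iota\circ p\subseteq q\circ\iota$; the target partial automorphism is allowed to be strictly larger on $\iota(A)$. (This is also visible in the paper's own proof, which checks only the one-sided inclusion $j_i'\restriction_B\circ p\subseteq h'\circ j_i'\restriction_B$.) So in your last paragraph the embedding condition is just $\psi_j(q_j)\subseteq r$, which follows immediately from $\psi_j(q_j)\subseteq\psi_j(\hat q_j)\subseteq\hat q$ and $\psi_j(q_j)\subseteq\psi_j(C_j)^2\subseteq F^2$, without any control over $\hat q_j\cap C_j^2$. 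And the back-and-forth ``avoid $C_j$'' argument cannot in general produce $\hat q_j\cap C_j^2=q_j$: take $a,b\in\dom(q_j)$ incomparable with $c=a\meet b\in C_j\setminus\dom(q_j)$; then \emph{every} $\hat q_j\supseteq q_j$ satisfies $\hat q_j(c)=q_j(a)\meet q_j(b)\in C_j$, a forced new pair in $C_j^2$ regardless of how you run the back-and-forth (the relevant one-point type is realised by $\hat q_j(a)\meet\hat q_j(b)$ only, so it \emph{is} algebraic). Once you drop the $\hat q_j\cap C_j^2=q_j$ requirement and replace the reflecting condition $r\cap\psi_j(C_j)^2=\psi_j(q_j)$ by the correct one-sided condition, the rest of your argument is sound and coincides with the paper's.
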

	\begin{proof}
		Consider two embeddings (in $\cK^1$) $(B,p)\to (C_1,h_1),(C_2,h_2)$. We may assume without loss of generality that the embeddings are simply inclusions and $C_1,C_2\subseteq \mathbf K$. Let $f_1,f_2$ be strict extensions of $h_1,h_2$ (respectively) to partial automorphisms of $\mathbf K$. Then for $i=1,2$, let $f_i'\supseteq p$ be the restriction of $f_i$ to substructure of $\mathbf K$ generated by the $f_i$-orbit of $\dom(p)$, and put $B_i:=\dom(f_i')$ and $\overline C_i:= \dom(f_i)$ for $i=1,2$. Note that $f_i'\supseteq p$ is a strict extension for $i=1,2$, and since $p$ is determined, we have an isomorphism $\theta_2\colon (B_1,f_1')\to (B_2,f_2')$ fixing $\dom(p)$ (and hence $B$) pointwise. Write $(\overline B,f):=(B_1,f_1')$ and write $\theta_1$ for the inclusion mapping $\overline B\to \overline C_1$.
		
		Note that clearly, $\overline B,\overline C_1,\overline C_2\in \overline{\cK}$, and $\theta_i$ yields an embedding $(\overline{B},f)\to (\overline C_i,f_i)$ so, since $(\overline{B},f)$ is an amalgamation base in $\overline{\cK}^1_\totalaut$, we have some $(D,h)\in \overline{\cK}^1_\totalaut$ and embeddings $j_i\colon (\overline C_i,f_i)\to (D,h)$ (for $i=1,2$) such that $j_1\circ \theta_1=j_2\circ \theta_2$. Now, put $D_0':=j_1[C_1]\cup j_2[C_2]$ and let $D'$ be the substructure of $D$ generated by $D_0'\cup h[D_0']$. Put $h':=h\restriction_{D_0'}$, $j_i':=j_i\restriction_{C_i}$. The following diagrams summarise the maps constructed, with the maps on right hand side being restrictions of the maps on the left hand side.
		\begin{center}
			\begin{tikzcd}
				& (D,h) & && (D',h') &  \\
				(\overline{C_1},f_1)\ar[ur,"j_1"] && (\overline{C_2},f_2)\ar[ul,"j_2",swap] & ({C_1},h_1)\ar[ur,"j_1'"] && ({C_2},h_2)\ar[ul,"j_2'",swap] \\
				& (\overline B, f)\ar[ul,"\theta_1"]\ar[ur,"\theta_2",swap] & & & (B, p)\ar[ul,"\subseteq"]\ar[ur,"\subseteq",swap] &
			\end{tikzcd}
		\end{center}
		
		Since $D\in \overline{\cK}$, we have $D'\in \cK$, so clearly $(D',h')\in \cK^1$. Clearly, $j_i'$ is an embedding of $(C_i,h_i)$ into $(D',h')$, and since $j_1\circ \theta_1=j_2\circ \theta_2$ and each $\theta_i$ fixes $B$ pointwise, we also have
		\[
		j_1'\restriction_B=j_1\restriction_B=(j_1\circ \theta_1)\restriction_B=(j_2\circ \theta_2)\restriction_B=j_2\restriction_B=j_2'\restriction_B.
		\]
		Analogously, since $j_i\circ \theta_i\circ f\subseteq h\circ j_i\circ \theta_i$ (because $j_i\circ \theta_i$ is an embedding of $(\overline B,f)$ into $(D,h)$), we have $j_i'\restriction_B\circ p\subseteq h'\circ j_i'\restriction_{B}$, so 
		$(D',h')$ is an amalgam of $(C_1,h_1)$ and $(C_2,h_2)$ over $(B,p)$, which completes the proof.
	\end{proof}
	
	\begin{remark}
		The proof of Lemma~\ref{proposition:amalgamation_implies_determined} shows something slightly more general. Namely, if $p$ is a determined partial automorphism such that for the unique strict extension $\hat p\supseteq p$, the structure $(\dom(\hat p),\hat{p})$ is an amalgamation base in $\overline{\cK}^1_\totalaut$, then $(B,p)$ (defined as there) is an amalgamation base in $\cK^1$.\xqed{\lozenge}
	\end{remark}
	
	\begin{remark}
		\label{remark:AP_is_neccessary}
		The assumption that we have AP in $\overline\cK^1_\totalaut$ (or at least that $(\dom(\hat{p}),\hat p)$ is an amalgamation base there) is necessary in Lemma~\ref{lemma:determined_is_amalgamation_base}. See Remark~\ref{remark:determined_insufficient_bdd}.\xqed{\lozenge}
	\end{remark}
	
	\begin{remark}
		If $f\supseteq p$ is a strict extension, then $f_+$, its restriction to the substructure generated by the positive $f$-orbits of elements of $\dom(p)$, is a strictly positive extension. It is also easy to see that $\dom(f)=\bigcup f^{-n}\dom(f_+)$ (because it is a substructure of $\dom(f)$ and it contains the $f$-orbits of elements of $\dom(p)$).\xqed{\lozenge}
	\end{remark}
	
	We will be looking for determined partial automorphisms in order to apply Lemma~\ref{lemma:determined_is_amalgamation_base}. The following proposition shows that it is, in fact, enough to show positive determination.
	\begin{proposition}
		\label{proposition:positively_determined_is_determined}
		If $p$ is a positively determined finite partial automorphism, then it is determined.
	\end{proposition}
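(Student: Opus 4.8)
The proof rests on the remark immediately preceding the statement. Recall from it that given a strict extension $f\supseteq p$, its restriction $f_+$ to the substructure $\dom(f_+)$ generated by the \emph{positive} $f$-orbits of $\dom(p)$ is a positively strict extension of $p$ (it extends $p$ because the positive orbits contain $\dom(p)$ itself), and $\dom(f)=\bigcup_{n\in\mathbf N}f^{-n}[\dom(f_+)]$. Two further observations will be used: since $f_+$ is an \emph{endomorphism}, $f[\dom(f_+)]=\range(f_+)\subseteq\dom(f_+)$, so the union $\bigcup_n f^{-n}[\dom(f_+)]$ is \emph{increasing}, a chain of substructures of $\dom(f)$; and each $f^{-n}[\dom(f_+)]$ is a substructure of $M$, being the image of a substructure under the automorphism $f^{-n}$ of the substructure $\dom(f)$.

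So, first I would take two strict extensions $f_1,f_2\supseteq p$ and pass to the positively strict extensions $f_{1,+},f_{2,+}$. By positive determination there is an isomorphism $\theta_+\colon\dom(f_{1,+})\to\dom(f_{2,+})$ fixing $\dom(p)$ pointwise with $\theta_+\circ f_{1,+}=f_{2,+}\circ\theta_+$. Next I would extend $\theta_+$ to a map $\theta\colon\dom(f_1)\to\dom(f_2)$ as follows: for $x\in\dom(f_1)$ pick any $n$ with $f_1^n(x)\in\dom(f_{1,+})$ (one exists by the chain description), and set $\theta(x):=f_2^{-n}\bigl(\theta_+(f_1^n(x))\bigr)$. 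Well-definedness follows by checking that replacing $n$ with $n+1$ leaves the value unchanged, using $\theta_+\circ f_{1,+}=f_{2,+}\circ\theta_+$ together with $f_1^{n+1}(x)=f_1(f_1^n(x))\in\dom(f_{1,+})$.

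It then remains to verify that $\theta$ has the required properties. The analogous formula with the roles of $f_1,f_2$ and of $\theta_+,\theta_+^{-1}$ interchanged yields a two-sided inverse, so $\theta$ is a bijection onto $\dom(f_2)$; taking $n=0$ shows $\theta$ fixes $\dom(p)$ pointwise; and a direct computation (again using $\theta_+\circ f_{1,+}=f_{2,+}\circ\theta_+$) gives $\theta\circ f_1=f_2\circ\theta$. Finally, $\theta$ is an isomorphism of substructures: to test any atomic formula (in the meet-tree case, any instance of $\leq$ or of the $\meet$-equation) on finitely many elements of $\dom(f_1)$, choose $n$ large enough that applying $f_1^n$ carries all the relevant elements, and the meets they generate, into $\dom(f_{1,+})$; on such a finite set $\theta$ is the composition of (a restriction of) the automorphism $f_1^n$ of $\dom(f_1)$, the isomorphism $\theta_+$, and the automorphism $f_2^{-n}$ of $\dom(f_2)$, hence preserves the atomic formula in both directions. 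Thus $f_1$ and $f_2$ are isomorphic over $p$, and since they were arbitrary strict extensions, $p$ is determined.

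The only delicate points are bookkeeping: making sure $\bigcup_n f_i^{-n}[\dom(f_{i,+})]$ is genuinely increasing (so that "pick $n$ large enough" is legitimate and $\theta$ is well defined), and the standard observation that preservation of structure can be tested one finite tuple at a time and transported through $f_1^n$, $\theta_+$, $f_2^{-n}$. No tree-specific input is needed; the argument lives entirely in the abstract setting of Definition~\ref{definition:determined_KT}.
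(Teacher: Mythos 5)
Your proof is correct and takes essentially the same route as the paper's: the paper defines $\theta_n := g^{-n}\circ\theta_+\circ f^n$ (in your notation, $f_2^{-n}\circ\theta_+\circ f_1^n$) and shows the $\theta_n$ form an increasing chain of isomorphisms whose union has domain $\dom(f_1)$ and range $\dom(f_2)$, which is exactly what you do when you define $\theta(x)$ by choosing a suitable $n$ and then check well-definedness and structure-preservation by transporting finite tuples through $f_1^n$, $\theta_+$, and $f_2^{-n}$.
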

	\begin{proof}
		Fix two strict extensions $f,g\supseteq p$. Let $f_+,g_+$ be the respective positive parts. Fix an isomorphism $\theta_+\colon \dom(f_+)\to \dom(g_+)$ such that $g_+=\theta_+\circ f_+\circ \theta_+^{-1}$ (which exists by the assumption). Put $\theta_n:=g^{-n}\circ \theta_+\circ f^{n}$.
		
		Take any $a_+\in \dom(f_+)$. Then for any $k\geq 0$ we have:
		\[
		f^{k}(a_+)=f_+^{k}(a_+)=\theta_+^{-1}\circ g_+^{k}\circ \theta_+(a_+)=\theta_+^{-1}\circ g^{k}\circ \theta_+(a_+),
		\]
		and hence $g^{-k}\circ \theta_+\circ f^{k}(a_+)=\theta_+(a_+)$. It follows that if $n\leq m$, then $\theta_n\subseteq \theta_m$. Indeed, if $a\in \dom(\theta_n)$, then $f^n(a)\in \dom(f_+)$, so $g^{n-m}\circ \theta_+\circ f^{m-n}(f^n(a))=\theta_+\circ f^n(a)$, and thus
		\[
		\theta_m(a)=g^{-n}\circ g^{n-m}\circ\theta_+\circ f^{m-n}(f^n(a))=g^{-n}\circ \theta_+\circ f^n(a)=\theta_n(a).
		\]
		It follows that $\theta:=\bigcup_n \theta_n$ is a well-defined function. It is not hard to see that $\dom(\theta)=\bigcup_n\dom(\theta_n)=\dom(f)$ and $\range(\theta)=\bigcup_n(\range(\theta_n))=\dom(g)$. Finally, since $\theta_+$ is an isomorphism (between its domain and range), so is each $\theta_n$, and hence also $\theta$.
	\end{proof}
	
%!!!
	The goal of this section is to find a useful criterion for a partial automorphism to be positively determined. To that end, we want to understand positively strict extensions of a partial automorphism by building it in a sequence of ``minimal'' steps. The following definition means to capture this notion of a single step.
	
	\begin{definition}
		\label{definition:immediate}
		Let $p\subsetneq f$ be partial automorphisms of a structure $M$. We say that $f$ is an \emph{immediate extension} of $p$ if $\dom(f)\setminus\dom(p)$ has only one element $a$, and moreover, $a\in \range(p)$ and the $p$-orbit of $a$ is the shortest non-cyclic orbit (i.e.\ no other non-cyclic orbit is shorter).\xqed{\lozenge}
	\end{definition}
	Note the extra condition at the end of the above definition might seem a bit technical. The point is that the infinite positively strict extensions are exactly the unions of infinite immediate chains, see Remark~\ref{remark:immediate_generate_positive}.
	
	\begin{remark}
		\label{remark:extension_to_substructure}
		If $p$ is a partial automorphism of a structure $M$, then $p$ has a unique extension $\overline{p}$ to a partial automorphism of $M$ such that $\dom(\overline{p})$ is the substructure of $M$ generated by $\dom(p)$. Furthermore, if $q\supseteq p$ is a partial automorphism extending $p$, then $\overline{q}\supseteq\overline{p}$.\xqed{\lozenge}
	\end{remark}
	
	\begin{remark}
		\label{remark:immediate_generate_positive}
		If $f\supseteq p$ is a positively strict extension, then by straightforward induction, there is a sequence $(f_n)_{n}$ (with $n$ ranging over $\omega$ or a finite ordinal if $f$ is finite) such that $f_0=p$, for each $n>0$, the extension $f_{n-1}\subseteq f_{n}$ is immediate, and $f=\bigcup_n \overline{f_n}$.
		
		Conversely, if $(f_n)_{n\in \omega}$ is such that $f_0=p$ and for each $n>0$, the extension $f_{n-1}\subseteq f_{n}$ is immediate, then $\bigcup_n \overline{f_n}$ is a positively strict extension of $p$.\xqed{\lozenge}
	\end{remark}
	The following Lemma will be useful in showing that the determination of a partial automorphism.%!!!
	\begin{lemma}
		\label{lemma:sufficient_for_determined}
		Let $M$ be a first order structure. Suppose $p$ is a finite partial automorphism of $M$ such that for every sequence $p=f_0\subsetneq f_1\subsetneq f_2\subsetneq\ldots\subsetneq f_n\subsetneq f_{n+1}$ such that for all $i\leq n$, the extension $f_i\subsetneq f_{i+1}$ is immediate, the following holds:
		
		\begin{itemize}[nosep]
			\item
			If $g_n\supseteq f_n$ is positively strict, then there is an automorphism $\tau_n\in \Aut(M)$ fixing $\dom(f_n)$ pointwise, such that $\tau_n\circ f_{n+1}\circ\tau_n^{-1}\subseteq g_n$.
		\end{itemize}
		
		Then $p$ is positively determined (and hence determined).
	\end{lemma}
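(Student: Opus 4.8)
The plan is to reduce to positive determination via Proposition~\ref{proposition:positively_determined_is_determined}, and then to show directly that any two positively strict extensions $f,g\supseteq p$ are isomorphic over $p$. By Remark~\ref{remark:immediate_generate_positive} I may fix an immediate chain $p=f_0\subsetneq f_1\subsetneq f_2\subsetneq\cdots$ (indexed by $\omega$, or by a finite ordinal if $f$ is finite) with $f=\bigcup_n\overline{f_n}$, where $\overline{f_n}$ is the extension of $f_n$ to the substructure generated by $\dom(f_n)$ (Remark~\ref{remark:extension_to_substructure}). The heart of the argument is to build, by induction on $n$, automorphisms $\sigma_n\in\Aut(M)$ such that: (a) $\sigma_n$ fixes $\dom(p)$ pointwise; (b) $\sigma_n\circ\overline{f_n}\circ\sigma_n^{-1}\subseteq g$; and (c) $\sigma_{n+1}$ and $\sigma_n$ agree on $\dom(\overline{f_n})$. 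Granting this, $\theta:=\bigcup_n\sigma_n\restriction\dom(\overline{f_n})$ will be the required isomorphism.

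For the base case I take $\sigma_0=\mathrm{id}_M$: indeed $\overline{f_0}=\overline{p}\subseteq\overline{g}=g$ by Remark~\ref{remark:extension_to_substructure}, using that $\dom(g)$ is already a substructure since $g$ is positively strict. For the inductive step, given $\sigma_n$ I would pull $g$ back along it: set $g':=\sigma_n^{-1}\circ g\circ\sigma_n$, which by (b) satisfies $g'\supseteq\overline{f_n}\supseteq f_n$. One then checks that $g'$ is a \emph{positively strict extension of $f_n$}: its domain $\sigma_n^{-1}[\dom(g)]$ and its range are substructures with $\range(g')\subseteq\dom(g')$, and — since $\sigma_n$ fixes $\dom(p)$ pointwise — one has $(g')^k(a)=\sigma_n^{-1}(g^k(a))$ for all $a\in\dom(p)$ and $k\geq 0$, so that $\dom(g')$, being the $\sigma_n^{-1}$-image of the substructure generated by the positive $g$-orbits of $\dom(p)$, is generated by the positive $g'$-orbits of $\dom(p)$, hence also of $\dom(f_n)\supseteq\dom(p)$. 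Now the hypothesis of the lemma, applied to the chain $f_0\subsetneq\cdots\subsetneq f_n\subsetneq f_{n+1}$ and to $g_n:=g'$, yields $\tau_n\in\Aut(M)$ fixing $\dom(f_n)$ pointwise with $\tau_n\circ f_{n+1}\circ\tau_n^{-1}\subseteq g'$. Setting $\sigma_{n+1}:=\sigma_n\circ\tau_n$ gives $\sigma_{n+1}\circ f_{n+1}\circ\sigma_{n+1}^{-1}\subseteq\sigma_n\circ g'\circ\sigma_n^{-1}=g$, whence $\sigma_{n+1}\circ\overline{f_{n+1}}\circ\sigma_{n+1}^{-1}=\overline{\,\sigma_{n+1}\circ f_{n+1}\circ\sigma_{n+1}^{-1}\,}\subseteq\overline{g}=g$, again by Remark~\ref{remark:extension_to_substructure}; this is (b). Condition (a) is clear, and (c) holds because $\tau_n$, being an automorphism fixing the generating set $\dom(f_n)$ pointwise, fixes the whole substructure $\dom(\overline{f_n})$ pointwise.

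With the $\sigma_n$ in hand, (c) ensures that the maps $\sigma_n\restriction\dom(\overline{f_n})$ form a chain, so $\theta:=\bigcup_n\sigma_n\restriction\dom(\overline{f_n})$ is a well-defined embedding with $\dom(\theta)=\dom(f)$; it fixes $\dom(p)$ pointwise by (a), has image inside $\dom(g)$ by (b), and satisfies $\theta\circ f=g\circ\theta$ by (b) together with $f=\bigcup_n\overline{f_n}$. It remains to see that $\theta$ is \emph{onto} $\dom(g)$: the image $\theta[\dom(f)]$ is a substructure of $M$ containing $\dom(p)$ and closed under $g$ (because $g\circ\theta=\theta\circ f$ and $f[\dom(f)]\subseteq\dom(f)$), hence it contains every positive $g$-orbit of $\dom(p)$, and therefore all of $\dom(g)$, since $g$ is positively strict. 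Thus $\theta$ is an isomorphism $\dom(f)\to\dom(g)$ over $p$ with $\theta\circ f=g\circ\theta$; as $f,g$ were arbitrary, $p$ is positively determined, and hence determined by Proposition~\ref{proposition:positively_determined_is_determined}.

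I expect the main obstacle to be the inductive step: verifying that the pullback $g'=\sigma_n^{-1}\circ g\circ\sigma_n$ really is a positively strict extension of $f_n$, so that the lemma's hypothesis is applicable. This rests on the identity $(g')^k(a)=\sigma_n^{-1}(g^k(a))$ for $a\in\dom(p)$, and therefore on propagating condition (a) through the construction. A secondary delicate point is that $\tau_n$ must fix not merely $\dom(f_n)$ but the generated substructure $\dom(\overline{f_n})$ pointwise — this is what makes the coherence condition (c), and hence the well-definedness of $\theta$, go through.
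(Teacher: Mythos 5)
Your proof is correct and follows essentially the same approach as the paper's: the same inductive construction of automorphisms (your $\sigma_n$ are the paper's $\theta_n$) fixing $\dom(p)$ and coherent on the $\dom(\overline{f_n})$, obtained in each step by pulling $g$ back along $\sigma_n$, verifying that the pullback is a positively strict extension of $f_n$, and then applying the hypothesis to produce $\tau_n$ and set $\sigma_{n+1}=\sigma_n\circ\tau_n$. You spell out the pullback verification and the coherence condition in slightly more detail than the paper does, but the argument is the same.
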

	\begin{proof}
		Fix any two positively strict extensions $f,g\supseteq p$. For simplicity, suppose that $f$ is infinite (the case when $f$ is finite is analogous).
		
		Let $(f_n)_{n}$ be a sequence as in Remark~\ref{remark:immediate_generate_positive}, so that $f_0=p$ and $\bigcup_n \overline{f_n}=f$. We will recursively define a sequence ${\theta_n}$ of automorphisms of $M$ such that:
		\begin{itemize}
			\item
			for all $n$, the domain of $p$ is fixed pointwise by $\theta_n$,
			\item
			if $n\leq m$, then $\theta_n\restriction_{\dom(\overline{f_n})}=\theta_m\restriction_{\dom(\overline{f_n})}$,
			\item
			for all $n$ we have $\theta_n \circ \overline{f_n}\circ \theta_n^{-1}\subseteq g$.
		\end{itemize}
		Then $\theta:=\bigcup_n \theta_n\restriction_{\dom(\overline{f_n})}$ will clearly be a well-defined embedding $\dom(f)\to \dom(g)$, fixing $\dom(p)$, such that $\theta \circ f\circ \theta^{-1}\subseteq g$. It is easy to see that $\theta$ must be onto $\dom(g)$ (because its image is a substructure which contains the positive $g$-orbits of elements of $\dom(p)$), and hence $\theta\circ f\circ \theta^{-1}=g$. Since $f,g$ are arbitrary, it will follow that $p$ is positively determined, and hence (by Proposition~\ref{proposition:positively_determined_is_determined}) also determined.
		
		It is clear that $\theta_0=\operatorname{id}_M$ satisfies the all the conditions listed above. Suppose we have $\theta_0,\ldots,\theta_n$. Put $g_n:=\theta_n^{-1}\circ g\circ\theta_{n}$. Then $g_n\supseteq p$ is positively strict (because $g\supseteq p$ is positively strict and $\theta_n$ fixes $\dom(p)$ pointwise) and $g_n\supseteq f_n\supseteq p$, so $g_n$ is a positively strict extension of $f_n$. This allows us to take $\tau_n\in \Aut(M)$ as in the hypothesis. Put $\theta_{n+1}:=\theta_n\circ{\tau_n}$. Since $\tau_n$ fixes $\dom(f_n)$, it follows that $\theta_{n+1}\restriction_{\dom(f_n)}=\theta_n\restriction_{\dom(f_n)}$, and hence also $\theta_{n+1}\restriction_{\dom(\overline{f_n})}=\theta_n\restriction_{\dom(\overline{f_n})}$.
		
		Finally, since $\tau_n\circ f_{n+1}\circ \tau_{n}^{-1}\subseteq g_n=\theta_n^{-1}\circ g\circ\theta_{n}$, it easily follows that $\theta_{n+1}\circ \overline{f_{n+1}}\circ \theta_{n+1}^{-1}\subseteq g$.
	\end{proof}

	\subsection*{Determined partial automorphisms of trees}
	Now, we will proceed to show that determined partial automorphisms of finite trees are amalgamation bases (in the class of finite trees with partial automorphisms). To that end, we will show that the hypothesis of Lemma~\ref{lemma:determined_is_amalgamation_base} is satisfied, i.e.\ that the class of meet-trees with an automorphism has the amalgamation property; we divide the proof into several steps.
	\begin{definition}\leavevmode
		\label{definition:completion}
		\begin{itemize}[nosep]
			\item
			We say that a meet-tree is \emph{complete} if every chain has a least upper bound.
			\item
			Given a meet-tree $T$, its \emph{completion} $\hat T$ is the meet-tree consisting of cuts in $T$ (i.e.\ the downwards closed chains, cf.\ Remark~\ref{remark:cuts_in_trees}), ordered by inclusion, as in Remark~\ref{remark:ordering_cuts}.\xqed{\lozenge}
		\end{itemize}
	\end{definition}
	(Note that for the particular case of linear orders, the definition of completeness given above is slightly more stringent than the usual one: it implies that there is a maximal element.)

	\begin{remark}
		\label{remark:completion}
		If $(B,g),(C_1,f_1),(C_2,f_2)$ are trees with automorphisms, $B$ is downwards closed in $C_1$ and $C_2$, $B=C_1\cap C_2$ and $f_1\cap f_2=g$, then $\hat B=\hat C_1\cap \hat C_2$ and $\hat g=\hat f_1\cap \hat f_2$, and $\hat g,\hat f_1,\hat f_2$ are automorphisms of $\hat B,\hat C_1,\hat C_2$, respectively (where $\hat g(\hat b)=g[\hat b]$ etc.).\xqed{\lozenge}
	\end{remark}

	\begin{remark}
		\label{remark:bijection}
		If $B_1,B_2$ are sets, $f_1$ is a bijection on $B_1$, $f_2$ is a bijection on $B_2$, and $f_1$ and $f_2$ agree on $B=B_1\cap B_2$, then $f_1\cup f_2$ is a bijection on $B_1\cup B_2$: because they agree on $B$, $f_1\cup f_2$ is a well-defined function, and it is easy to check that $f_1^{-1}\cup f_2^{-1}$ is its inverse.\xqed{\lozenge}
	\end{remark}
	
	\begin{lemma}
		\label{lemma:down_closure}
		Suppose $(B,g),(C_1,f_1),(C_2,f_2)$ are meet-trees with automorphisms such that $B=C_1\cap C_2$ and $g=f_1\cap f_2$. Then there are $B',C_1',C_2',g',f_1',f_2'$ such that:
		\begin{itemize}
			\item
			$B'= C_1'\cap C_2'$ and is downwards closed in each of $C_1',C_2'$		,
			\item
			$g'$ is automorphism of $B'$ and $f_i'$ is an automorphism of $C_i'$ for $i=1,2$,
			\item
			$g'=f_1'\cap f_2'$,
			\item
			$(B,g)\subseteq (B',g')$ and $(C_i,f_i)\subseteq (C_i',f_i')$ for $i=1,2$.
		\end{itemize}
	\end{lemma}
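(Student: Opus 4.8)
The plan is to construct $B',C_i',g',f_i'$ explicitly: restrict attention to the parts of $C_1$ and $C_2$ that lie below $B$, amalgamate those over $B$ as freely as possible — letting their only interaction be through $B$ and the bounded cuts of $B$ — and then glue the amalgam back into $C_1$ and $C_2$.

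First I would set $Y_i:=\{c\in C_i\mid c\le b\text{ for some }b\in B\}$, the downward closure of $B$ in $C_i$, and $X_i:=Y_i\setminus B$. Downward-closed subsets of a meet-tree are substructures (they are closed under $\meet$, as $x\meet y\le x,y$), so $Y_i\le C_i$ and $B\le Y_i$; and since $f_i\restriction B=g$ fixes $B$ setwise, $Y_i$ is $f_i$-invariant, so $h_i:=f_i\restriction Y_i$ is an automorphism of $Y_i$ extending $g$. As $C_1\cap C_2=B$ we get $X_1\cap X_2=\emptyset$, hence $Y_1\cap Y_2=B$ and $h_1\cap h_2=g$. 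For any $z$ lying below an element of $B$ write $\kappa(z):=\{a\in B\mid a\le z\}$, a \emph{bounded} cut of $B$. Let $\hat B_0$ be the downward closure of $B$ in its completion $\hat B$ — equivalently, the meet-tree of those cuts of $B$ bounded above by an element of $B$ (ordered by inclusion, meet $=$ intersection, $B$ embedded via principal cuts); it is a downward-closed substructure of $\hat B$, every element of which lies below an element of $B$, and the canonical automorphism $\hat g$ of $\hat B$ extending $g$ restricts to an automorphism $\hat g_0\supseteq g$ of $\hat B_0$.

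Next I would let $B':=\hat B_0\cup X_1\cup X_2$, glued along $B$, carrying the order and meet that restrict to the given ones on $\hat B_0$ and on each $Y_i$, and in which any $z,z'$ from distinct ``pieces'' — the only genuinely new cases being $z\in X_1,z'\in X_2$ and $z\in X_i,z'\in\hat B_0\setminus B$, all other cross-cases occurring inside one of $\hat B_0,Y_1,Y_2$ — satisfy $z\meet z':=\kappa(z)\cap\kappa(z')\in\hat B_0$ (reading $\kappa$ of an element of $\hat B_0$ as the cut itself), with $z\le z'$ iff $\kappa(z)\subseteq\kappa(z')$ and $z,z'$ incomparable otherwise. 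Then $g':=\hat g_0\cup h_1\cup h_2$ is a well-defined bijection of $B'$ (Remark~\ref{remark:bijection}) which, since $\kappa(g'(z))=g[\kappa(z)]$ for all $z$ and $g'$ preserves each piece, is an automorphism of $B'$ extending $g$. Finally I would set $C_i':=C_i\cup(B'\setminus Y_i)$, glued along $B$; then $C_1'\cap C_2'=B'$, and I equip $C_i'$ with the order and meet extending those of $C_i$ and of $B'$ in which, for $c\in C_i\setminus Y_i$ (which lies below no element of $B$) and $z\in B'\setminus Y_i$, we pick $b\in B$ with $z\le b$ and let $c\meet z$ be $(c\meet b)\meet z$ computed in $B'$ — where $c\meet b\in Y_i\subseteq B'$ — checking that this is independent of the choice of $b$. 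With $f_i':=f_i\cup(g'\restriction(B'\setminus Y_i))$ all the required conclusions then follow: $g'=f_1'\cap f_2'$; $(B,g)\subseteq(B',g')$ and $(C_i,f_i)\subseteq(C_i',f_i')$; and $B'$ is downward closed in each $C_i'$, since every element of $B'$ lies below an element of $B$ whereas no element of $C_i\setminus Y_i$ does.

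The main obstacle will be the bookkeeping: verifying that the prescriptions above genuinely define meet-tree structures on $B'$ and on the $C_i'$ — that $\le$ is a semilinear partial order, that $\meet$ really computes greatest lower bounds, and that $\meet$ is associative, equivalently that the ``mediated'' meets $\kappa(z)\cap\kappa(z')$ and $(c\meet b)\meet z$ are mutually consistent and agree with the meets already present in $\hat B_0$, the $Y_i$ and the $C_i$ — together with the analogous check that $g'$ and the $f_i'$ respect this structure. This is not deep, but it is a lengthy case analysis organised around the $\kappa$-traces of the elements involved, with Fact~\ref{fact:lemmaxyz} doing most of the work.
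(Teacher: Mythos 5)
Your approach is the same in spirit as the paper's: take the downward closures $Y_i$ of $B$ in $C_i$, amalgamate them over $B$ using the cuts of $B$ to govern the cross-piece comparisons, then glue the result back into the $C_i$. However, the order you prescribe on $B'$ is not antisymmetric, so it is not a partial order at all. You declare, for $z,z'$ coming from distinct pieces, that $z\leq z'$ iff $\kappa(z)\subseteq\kappa(z')$; whenever $\kappa(z)=\kappa(z')$ this gives both $z\leq z'$ and $z'\leq z$ with $z\neq z'$, and this situation genuinely occurs. For example, let $B=\{a,b\}$ with $a<b$, $C_1=B\cup\{z\}$ with $a<z<b$, $C_2=B\cup\{w\}$ with $a<w<b$, and all three automorphisms the identity; then $B=C_1\cap C_2$, $g=f_1\cap f_2$, and $\kappa(z)=\{a\}=\kappa(w)$, so your rule makes $z$ and $w$ mutually $\leq$ while distinct. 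Adjoining $\hat B_0$ only makes this worse: any $z\in X_i$ whose cut is non-principal collides in the same way with the element $\kappa(z)\in\hat B_0\setminus B$, and two elements $z<z'$ of $X_1$ realising the same non-principal cut are each mutually $\leq$ with $\kappa(z)$ yet also satisfy $z<z'$ inside $Y_1$. Identifying elements with equal cuts is not a repair either, since then the map $C_1\to C_1'$ can fail to be injective.

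The paper sidesteps both problems. It does not adjoin $\hat B_0$ at all: its $B'$ is just $Y_1\cup Y_2$. And it breaks ties deterministically rather than by the symmetric rule $\kappa(z)\subseteq\kappa(z')$: writing $q_i$ for the cut of $c_i\in Y_i\setminus B$ in $B$, it declares $c_1<c_2$ whenever $q_1\subseteq q_2$ (so strictly less even when $q_1=q_2$), $c_1>c_2$ whenever $q_1\supsetneq q_2$, and when $q_1,q_2$ are incomparable it declares $c_1,c_2$ incomparable with $c_1\meet c_2:=b_1\meet b_2$ for any $b_i\in B$ above $c_i$. That meet lies in $B$, because when $q_1$ and $q_2$ are incomparable the intersection $q_1\cap q_2$ is automatically the principal cut of $b_1\meet b_2$ (take $a_i\in q_i\setminus q_{3-i}$ and note $q_1\cap q_2$ is the principal cut of $a_1\meet a_2$); so no passage to a completion is ever needed for $B'$. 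Your construction could probably be repaired along these lines, but as written the relation you define on $B'$ is not a partial order.
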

	\begin{proof}
		Let $B_i'$ be the downwards closure of $B$ in $C_i$ for $i=1,2$. Put $B':=B_1'\cup B_2'$, $g':=f_1\restriction_{B_1'}\cup f_2\restriction_{B_2'}$, $C_i':=C_i\cup B'$ and $f_i':=f_i\cup g'$ for $i=1,2$.
		
		We put on each $B_i'$ the meet-tree structure inherited from $C_i$. We proceed to define the meet-tree structure on $B'$. Recall from Remark~\ref{remark:ordering_cuts} that we have a canonical ordering on the cuts in $B$. Now, take some $c_1,c_2$ from $B_1'\setminus B$ and $B_2'\setminus B$, respectively, and denote by $q_1,q_2$ their cuts in $B$, and take any $b_1,b_2\in B$ such that $b_i>c_i$. Then:
		\begin{itemize}
			\item
			if $q_1\leq q_2$, we declare $c_1<c_2$ and $c_1\meet c_2=c_1$,
			\item
			if $q_1>q_2$, we declare $c_1>c_2$ and $c_1\meet c_2=c_2$,
			\item
			if $q_1$ and $q_2$ are incomparable, we declare that $c_1$ and $c_2$ are incomparable and $c_1\meet c_2=b_1\meet b_2$.
		\end{itemize}
		The fact that this definition of $<$ defines a semilinear partial order is left as an exercise. Let us only show that the $\meet$ is correct and well-defined in the last case. We need to show that given $c\in B'$, we have that $c\leq c_1,c_2$ if and only if $c\leq b_1\meet b_2$ (this immediately implies that $c_1\meet c_2$ does not depend on the choice of $b_1$ and $b_2$). The fact that the left-hand side implies the right-hand side is trivial, since $b_i\geq c_i$. For the converse, suppose $c\leq b_1\meet b_2$. Note that $c_1$ and $c$ are comparable, so it is enough to show that $c\not> c_1$. But $c\leq b_2$, and we cannot have $c_1\leq b_2$ (because $c_1$ and $c_2\leq b_2$ are incomparable). The definition of the meet-tree structure on each $C_i'$ is straightforward.
		
		Now, we need to check that $g'$ is an automorphism of $B'$. It is clear that $f_i\restriction_{B_i'}$ is an automorphism of $B_i'$ for each $i$, and by Remark~\ref{remark:bijection}, $g'$ is a bijection, so we only need to check that it preserves the ordering and the meets. But this is easy to see as a consequence of the observation that $g'$ preserves $B,B_1'\setminus B, B_2'\setminus B$, and the cuts over $B$ of all elements of $B'$.
		
		Finally, again by Remark~\ref{remark:bijection}, each $f_i'$ is a bijection on $C_i'$, and the fact that it is an automorphism follows from Lemma~\ref{lemma:linear_union} (with $f=g'$ and $g=f_i$).
	\end{proof}
	
	\begin{lemma}
		\label{lemma:AP_in_trees_with_automorphisms}
		The class of meet-trees with automorphisms has the amalgamation property.
	\end{lemma}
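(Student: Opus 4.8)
The plan is to build the amalgam by gluing $C_1$ and $C_2$ along $B$, after first completing everything so that the new meet points we shall need are already present. I would begin with the standard reductions: assume the two embeddings of $(B,g)$ are inclusions with $B = C_1\cap C_2$ and $g = f_1\cap f_2$; apply Lemma~\ref{lemma:down_closure} to reduce to the case where $B$ is downwards closed in each $C_i$; and then invoke Remark~\ref{remark:completion} to pass to completions, so that in addition $B$, $C_1$, $C_2$ are complete meet-trees, $B = C_1\cap C_2$ is downwards closed in each, $g = f_1\cap f_2$, and $g,f_1,f_2$ are total automorphisms. An amalgam of this configuration over $(B,g)$ restricts, via the canonical embeddings of the original structures into their completions, to an amalgam of the original data, so it suffices to treat it.

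Next I would isolate the structural fact that makes completeness useful. For $c\in C_i\setminus B$, the set $\{b\in B : b\leq c\}$ is a downwards closed chain in $B$ (it lies in the linearly ordered set $(C_i)_{\leq c}$), so by completeness it has a supremum $b_c\in B$; using that $B$ is downwards closed in $C_i$, together with Fact~\ref{fact:lemmaxyz}, one checks that $b_c < c$, that $b_c$ is the greatest element of $B$ below $c$, that no element of $C_i\setminus B$ lies below any element of $B$, and that $c\meet b = b_c\meet b$ for every $b\in B$. I would then take $D := C_1\cup C_2$ (so that $C_1\cap C_2 = B$), with order extending those of $C_1$ and $C_2$ by the rule that every $c_1\in C_1\setminus B$ is incomparable to every $c_2\in C_2\setminus B$, with $c_1\meet c_2 := b_{c_1}\meet b_{c_2}$ (computed in $B$); and set $h := f_1\cup f_2$.

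The rest is verification that I expect to be routine. One checks that $D$ is a semilinear partial order because any chain through a fixed point stays within a single $C_i$; that the declared meet of a cross pair is correct because any common lower bound of $c_1$ and $c_2$ lies in $C_1\cap C_2 = B$ and hence below both $b_{c_1}$ and $b_{c_2}$; that therefore $D$ is a meet-tree and each $C_i$ is a substructure of it; and that $h$ is a bijection (Remark~\ref{remark:bijection}) preserving order and meets within each $C_i$, and also across them, since $f_i\restriction_B = g$ maps $B$ onto $B$, hence $C_i\setminus B$ onto itself, sending each $b_c$ to $b_{h(c)}$. So $h\in\Aut(D)$, and the original $(C_i,f_i)$ embed into $(D,h)$ (through their completions) compatibly over $(B,g)$, giving the amalgam.

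The one genuinely non-formal ingredient — as opposed to mere bookkeeping — is the passage to completions together with the observation that it suffices: in a meet-tree that is not complete there need not be any element available to serve as $c_1\meet c_2$, precisely in the case where the cuts of $c_1$ and $c_2$ in $B$ have a common least upper bound that is not realised in $B$. Completeness of $B$ removes this obstruction, and it is also what furnishes each $c\in C_i\setminus B$ with a well-defined attachment point $b_c$; once this is set up, the construction and the verifications above are straightforward, using only semilinearity and Fact~\ref{fact:lemmaxyz}.
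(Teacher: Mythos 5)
Your proof is correct and follows essentially the same route as the paper's: the same reductions via Lemma~\ref{lemma:down_closure} and Remark~\ref{remark:completion}, the same definition of $b_c$ as the supremum of the cut of $c$ in $B$, the same gluing $D = C_1 \cup C_2$ with cross-pairs declared incomparable and $c_1 \meet c_2 := b_{c_1}\meet b_{c_2}$, and the same observation that $b_{f_i(c)} = f_i(b_c)$ makes $h = f_1\cup f_2$ an automorphism. Your closing remark correctly identifies completeness as the key enabling step, matching the paper's strategy exactly.
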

	\begin{proof}
		Suppose $(B,g),(C_1,f_1),(C_2,f_2)$ are meet-trees with automorphisms, with fixed embeddings of $(B,g)$ into each $(C_i,f_i)$. We need to find a tree with automorphism $(D,h)$ such that $(C_i,f_i)$ embed in $(D,h)$ for $i=1,2$ in such a way that the two resulting embeddings of $(B,g)$ into $(D,h)$ coincide.
		
		We may assume for simplicity that $(B,g)\subseteq (C_i,f_i)$ for $i=1,2$, while $C_1\cap C_2=B$ and $f_1\cap f_2=g$. We will find a $(D,h)$ such that $(C_i,f_i)\subseteq (D,h)$ (which will immediately imply that the two embeddings of $(B,g)$ coincide).
		
		By Lemma~\ref{lemma:down_closure}, we may assume without loss of generality that $B$ is downwards closed in $C_1,C_2$. Then by Remark~\ref{remark:completion}, we may also assume that $B,C_1,C_2$ are complete. We put $D:=C_1\cup C_2$ and $h:=f_1\cup f_2$. We need to describe the meet-tree structure on $D$ and to show that $h$ is an automorphism of this structure. Now, for each $c\in C_1\cup C_2$, put $b_c:=\sup\{b\in B\mid b\leq c \}$ (this is well-defined by completeness of $B$).
		
		On $D$, we define the structure in the following way:
		\begin{itemize}
			\item
			on each $C_i$, the structure is simply the original structure,
			\item
			given $c_1\in C_1\setminus B$ and $c_2\in C_2\setminus B$, we declare $c_1$ and $c_2$ to be incomparable and put $c_1\meet c_2:=b_{c_1}\meet b_{c_2}$ (where the meet on the right-hand side is in the sense of $B$).
		\end{itemize}
		Note that this gives a meet-tree structure: indeed, transitivity of the order follows easily from the fact that $B$ is downwards closed in $C_1$ and $C_2$, and given any $d_0\in D$, the interval $(-\infty,d_0]$ in $D$ is contained in $C_1$ or $C_2$ (depending on whether $d_0\in C_1$ or $d_0\in C_2$), so semilinearity follows from semilinearity of $C_1$ and $C_2$. By the same token, given $c_1\in C_1$ and $c_2\in C_2$, the intersection $(-\infty,c_1]\cap (-\infty,c_2]$ is contained in $B$, so it is contained in $(-\infty,b_{c_1}]\cap (-\infty,b_{c_2}]=(-\infty,b_{c_1}\meet b_{c_2}]$.
		
		Furthermore, $h$ is an automorphism of $D$: by Remark~\ref{remark:bijection}, it is a bijection, and it clearly preserves meets and inequalities within each $C_i$. Now, given $c_1\in C_1\setminus C_2$ and $c_2\in C_2\setminus C_1$, the two are incomparable, and since $h(c_1)=f_1(c_1)\in C_1\setminus C_2$ and $h(c_2)=f_2(c_2)\in C_2\setminus C_1$ (because $C_1,C_2$ are clearly $h$-invariant), $h(c_1)$ and $h(c_2)$ are incomparable. It is also not hard to see that $b_{f_i(c_i)}=f_i(b_{c_i})$, which implies that $h$ preserves meets.
		
		Finally, clearly $D\supseteq C_1,C_2$ and $h\supseteq f_1,f_2$, so $(D,h)$ is as desired.
	\end{proof}
	
	One could ask whether the analogue of Lemma~\ref{lemma:AP_in_trees_with_automorphisms} for trees of bounded arity is true. Unfortunately, this is not the case, which the following proposition shows.
	\begin{proposition}
		\label{proposition:no_AP_bdd}
		For every integer $k>1$, the class of meet-trees of arity at most $k$ with an automorphism does not have the amalgamation property.
	\end{proposition}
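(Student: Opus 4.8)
The plan is to write down, for each fixed $k>1$, an explicit obstruction: a $k$-ary meet-tree $B$ with an automorphism $g$ together with two extensions $(C_1,f_1)$ and $(C_2,f_2)$ in the class, such that every amalgam of $(C_1,f_1)$ and $(C_2,f_2)$ over $(B,g)$ is forced to contain $k+1$ elements with a common meet distinct from all of them --- which, since a meet-tree is $k$-ary exactly when no such configuration exists (see Definition~\ref{definition:trees} and the discussion of arity following it), shows that no amalgam lies in the class.

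The example will be as small as possible. Take $B=\{*\}$, a one-point meet-tree, with $g=\operatorname{id}$. Let $C_1 := \{*,a_1,\dots,a_k\}$, where the $a_i$ are pairwise incomparable and all lie directly above $*$ (so $a_i\meet a_j=*$ for $i\neq j$), and let $f_1$ be the $k$-cycle $a_i\mapsto a_{i+1 \bmod k}$ fixing $*$; let $C_2:=\{*,b\}$ with $*<b$ and $f_2=\operatorname{id}$. Then $C_1$ has arity $k$ and $C_2$ has arity $1$, so both are $k$-ary, the $f_i$ are genuine automorphisms, and the inclusions $B\hookrightarrow C_i$ are embeddings commuting with the automorphisms; this is the routine verification that sets up the AP instance.

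To see that the instance cannot be amalgamated, suppose $(D,h)$ is a $k$-ary meet-tree with an automorphism admitting embeddings $d_i\colon C_i\to D$ with $d_1\restriction_B = d_2\restriction_B$; identify each structure with its image, so that $*,a_1,\dots,a_k,b\in D$ all lie above $*$, with $a_i\meet a_j=*$, while $h$ cyclically permutes the $a_i$ and fixes $*$ and $b$. Consider $w_i:=b\meet a_i$ for $1\le i\le k$. All the $w_i$ lie below $b$, so by semilinearity they are pairwise comparable, i.e.\ $\{w_1,\dots,w_k\}$ is a finite chain; and $h$ maps this set to itself, since $h(w_i)=h(b)\meet h(a_i)=b\meet a_{i+1\bmod k}=w_{i+1\bmod k}$. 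An order-preserving self-bijection of a finite chain is the identity, so $h$ fixes each $w_i$, forcing $w_1=\dots=w_k=:w$. Then $w=w_1\le a_1$ and $w=w_2\le a_2$ give $w\le a_1\meet a_2=*$, while $w=b\meet a_1\ge *$; hence $w=*$. Thus $b\meet a_i=*$ for every $i$, and together with $a_i\meet a_j=*$ this makes $\{b,a_1,\dots,a_k\}$ a set of $k+1$ pairwise distinct elements of $D$ (the displayed meets are strictly below the elements involved) with common meet $*$, none of them equal to $*$ --- contradicting $k$-arity of $D$. So no amalgam exists.

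The step I expect to be the real content is the one ruling out a cleverer amalgam, and it is also what dictates the shape of the example. The obvious attempts fail: if $C_1$ and $C_2$ each merely add a single new branch above a common point of $B$, an amalgam can always identify the two new branches just above that point, so the branch count there never increases; and adding a $(k{+}1)$-st branch at a point of $B$ already carrying $k$ branches would take $C_1$ or $C_2$ out of the class. The $k$-cycle closes both loopholes: once $C_1$ supplies a full $g$-orbit of $k$ mutually-meeting branches at $*$, there is no $h$-equivariant way to absorb them into the single branch of $C_2$, and the rigidity of finite chains (used in the previous paragraph) pins $b\meet a_i$ down to $*$. Everything else is bookkeeping.
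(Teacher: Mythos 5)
Your proof is correct and uses the same witnessing example as the paper: $B$ a single fixed point, $C_1$ adjoining a $k$-cycle of siblings over it, $C_2$ adjoining a single fixed point above it. The paper derives the contradiction in the opposite order — it first invokes $k$-arity of the amalgam to force some $a_i\meet b>*$, then uses $h$-equivariance together with Fact~\ref{fact:lemmaxyz} to contradict it — whereas you first use the rigidity of a finite chain under the cyclic action of $h$ to pin down all $b\meet a_i=*$, and only then invoke $k$-arity; but this is a cosmetic reorganization of the same argument, not a genuinely different route.
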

	\begin{proof}
		Fix $k$ and let $B=\{b\}$, $C_1=B\cup \{c_1^1,c_1^2,\ldots,c_1^k\}$, $C_2=B\cup \{c_2\}$ be meet-trees such that $c_2>b$ and for all $i\neq j$ we have $c_1^i,c_1^j>c_1^i\meet c_1^j=b$. Define $f$ on $B$ as $f(b)=b$, $f_1\supseteq f$ on $C_1$ as $f_1(c_1^i)=c_1 ^{i+1}$, for $i<k$, $f_1(c_1^k)=c_1^1$, and finally define $f_2\supseteq f$ on $C_2$ as the identity map.
		
		We  claim that $(C_1,f_1)$ and $(C_2,f_2)$ do not amalgamate over $(B,f)$. Indeed, suppose we do have an amalgam $(D,g)$, with $D$ being a tree of arity at most $k$. It follows that for some $i$ we have $c_1^i\meet c_2>b$. We may assume without loss of generality that $i=1$. But then by Fact~\ref{fact:lemmaxyz}, since $b=c_1^1\meet c_1^2$, it follows that $c_1^2\meet c_2=b$. On the other hand, since $g$ is an automorphism of $D$, we have $b=g(b)<g(c_1^1\meet c_2)=g(c_1^1)\meet g(c_2)=c_1^2\meet c_2$, a contradiction.
	\end{proof}
	In the opposite direction to Proposition~\ref{proposition:no_AP_bdd}, the following Remark describes a possible class of amalgamation classes for the case of bounded arity.
	\begin{remark}
		\label{remark:sufficient_for_AP_bdd}
		We suspect that the following is true (but, as we do not use it, we will did not check it very carefully): if $k>1$ and $(B,g)$ is a meet-tree of arity at most $k$ with an automorphism, such that every $g$-periodic $b\in B$ has maximal rank (i.e.\ there are $b_1,\ldots, b_k$ which are pairwise incomparable, such that $b_i\meet b_j=b$ for all $i\neq j$), then $(B,g)$ is an amalgamation base (in the class of all meet-trees of arity at most $k$ with an automorphism).\xqed{\lozenge}
	\end{remark}
	
	The following corollary summarises the results of this section. In the rest of the paper, we will show (using Lemma~\ref{lemma:sufficient_for_determined}) that determined finite partial automorphisms of $\bT$ are cofinal, which will imply that $\bT$ has a generic automorphism.
	\begin{corollary}
		\label{corollary:determined_is_AB_trees}
		If $p$ is a finite partial automorphism of $\bT$ and $B\subseteq \bT$ is generated by $\dom(p)\cup \range(p)$, then $(B,p)$ is an amalgamation base in the class of finite trees with partial automorphisms if and only if $p$ is determined.
	\end{corollary}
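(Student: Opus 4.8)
The plan is to read off both directions of the equivalence from the machinery assembled in this section, using Fact~\ref{fact:generic_trees}, which tells us that $\cK$, the class of finite meet-trees, is a Fraïssé class whose limit is $\bT$.

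First I would dispatch the ``only if'' direction. Suppose $(B,p)$ is an amalgamation base in $\cK^1$. Since $\cK$ is a Fraïssé class with limit $\bT$ and $B$ is, by hypothesis, the substructure of $\bT$ generated by $\dom(p)\cup\range(p)$, Proposition~\ref{proposition:amalgamation_implies_determined} applies directly and yields that $p$ is determined; there is nothing further to check.

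For the ``if'' direction I would invoke Lemma~\ref{lemma:determined_is_amalgamation_base}, whose one hypothesis is that $\overline{\cK}^1_\totalaut$ has the amalgamation property. The key point is to identify $\overline{\cK}$ — the class of structures whose age is contained in $\cK$ — with the class of \emph{all} meet-trees. This rests on two observations: by Fact~\ref{fact:lemmaxyz} every finitely generated substructure of a meet-tree is finite (it is the set of meets of pairs of its generators), so a meet-tree has its age inside $\cK$; and being a meet-tree is a universal property in the language $\{\leq,\meet\}$ (the poset axioms, semilinearity, and the axioms asserting that $a\meet b$ is the greatest common lower bound are all universal), so conversely any structure all of whose finite substructures are finite meet-trees is itself a meet-tree. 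Hence $\overline{\cK}^1_\totalaut$ is exactly the class of meet-trees equipped with a total automorphism, which has the amalgamation property by Lemma~\ref{lemma:AP_in_trees_with_automorphisms}. Then Lemma~\ref{lemma:determined_is_amalgamation_base} gives that $(B,p)$ is an amalgamation base in $\cK^1$, completing the proof.

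I do not expect any real obstacle here: all the genuine content is already packaged into Lemma~\ref{lemma:AP_in_trees_with_automorphisms} (together with its supporting reductions, Lemma~\ref{lemma:down_closure} and Remark~\ref{remark:completion}), and the corollary itself is essentially bookkeeping. The only step requiring a moment's care is the identification $\overline{\cK}=\{\text{meet-trees}\}$, for which one must explicitly note the universality of the meet-tree axioms and the finiteness of finitely generated substructures; once that is in place, citing Proposition~\ref{proposition:amalgamation_implies_determined} and Lemma~\ref{lemma:determined_is_amalgamation_base} finishes both directions.
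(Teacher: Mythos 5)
Your proof is correct and takes essentially the same route as the paper: both directions come from Proposition~\ref{proposition:amalgamation_implies_determined} and Lemma~\ref{lemma:determined_is_amalgamation_base}, with the hypothesis of the latter supplied by Lemma~\ref{lemma:AP_in_trees_with_automorphisms} once one identifies $\overline{\cK}$ with the class of all meet-trees. The paper states that identification without elaboration, whereas you correctly spell out the two reasons (finiteness of finitely generated substructures via Fact~\ref{fact:lemmaxyz}, and universality of the meet-tree axioms in the language $\{\leq,\meet\}$), but the underlying argument is the same.
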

	\begin{proof}
		Note that if the age of a structure consists of finite meet-trees, then it is a meet-tree itself, and conversely, the age of any meet-tree consists of finite meet-trees. The corollary follows immediately by Lemma~\ref{lemma:AP_in_trees_with_automorphisms}, Lemma~\ref{lemma:determined_is_amalgamation_base} and Proposition~\ref{proposition:amalgamation_implies_determined}.
	\end{proof}
	
	\begin{remark}
		\label{remark:determined_insufficient_bdd}
		Note that the construction from Proposition~\ref{proposition:no_AP_bdd} actually shows that the analogue of Corollary~\ref{corollary:determined_is_AB_trees} for $\bT_k$ fails (when $k>1$): an automorphism of $\bT_k$ which is just a single fixed point is trivially determined, and the example shows that it does not yield an amalgamation base.\xqed{\lozenge}
	\end{remark}
	
	\begin{remark}
		\label{remark:AP_linear_aut}
		One can show that the class of linear orders with an automorphism does have AP (essentially, arguing as in the proof of Lemma~\ref{lemma:down_closure}), so the analogue of Corollary~\ref{corollary:determined_is_AB_trees} for $\bT_1$ holds. This can be used (along with a variant of Theorem~\ref{theorem:main}) to recover the fact that $({\mathbf Q},<)$ has a generic automorphism, see Remark~\ref{remark:works_for_linear}.\xqed{\lozenge}
	\end{remark}

	\section{Orbits in meet-trees}
	\label{section:Orbits in meet-trees}
	Having Corollary~\ref{corollary:determined_is_AB_trees}, to show CAP for the class of finite automorphisms of meet-trees, it is enough to show that determined partial automorphisms are cofinal. Before we can do that, we need to understand the orbits of partial automorphisms in trees and their extensions.
	\begin{definition}
		\label{definition:partial_orbit}
		A \emph{(finite) partial orbit} (in a meet-tree) is a finite sequence $\eta=(\eta_0,\eta_1,\ldots,\eta_n)$ of elements of the tree such that the map $\eta_i\mapsto \eta_{i+1}$ for $i=0,\ldots, n-1$ is a partial automorphism.
		
		Given a finite partial automorphism $p$, an \emph{orbit of $p$} is a partial orbit $\eta=(\eta_0,\ldots, \eta_n)$ such that $p(\eta_i)=\eta_{i+1}$ for all $i<n$ and either $\eta_0=\eta_k$ for some $k\leq n$, or $\eta_0\notin \range(p)$ and $\eta_n\notin \dom(p)$.
		
		Infinite partial orbits and orbits of infinite partial automorphisms are defined analogously.\xqed{\lozenge}
	\end{definition}
	Until the end of the paper, we will use the convention that lower-case Greek letters represent (usually finite) partial orbits in meet-trees, and each such orbit $\eta$ is enumerated as $\eta_0,\ldots,\eta_n$, as in Definition~\ref{definition:partial_orbit}.
	
	\begin{remark}
		Since every finite tree can be embedded in $\bT$ (which is ultrahomogeneous), every finite partial meet-tree automorphism can be extended to a total meet-tree automorphism (possibly after enlarging the tree). In particular, every finite partial orbit is contained in the orbit of some total automorphism.
		
		(Likewise, every finite automorphism and finite partial orbit in a meet-tree of arity at most $k$ can be extended to an automorphism or a full orbit of a meet-tree of arity at most $k$.)\xqed{\lozenge}
	\end{remark}
	
	\begin{definition}
		\label{definition:orbit_types}
		Let $\eta=(\eta_0,\eta_1,\ldots,\eta_n)$ be a partial orbit, while $k$ is a positive integer.
		
		Then:
		\begin{itemize}
			\item We say that $\eta$ is a \emph{$k$-cycle} (or \emph{$k$-cyclic}) if $k$ is minimal such that for some $\eta_k=\eta_0$.
			
			\item We say that $\eta$ is an \emph{ascending $k$-spiral} if $k$ is minimal such that 
			$\eta_k>\eta_0$.
			Likewise, we say that it is a \emph{descending $k$-spiral} if $k$ is minimal such that 
			$\eta_k<\eta_0$.
			
			\item We say that $\eta$ is an
			\emph{ascending $k$-comb} if it is not a spiral and $k$ is minimal such that $\eta_{2k}\meet \eta_k>\eta_k\meet \eta_0$. Likewise, we say that it is a \emph{descending $k$-comb} if it is not a spiral and $k$ is minimal such that $\eta_{2k}\meet \eta_k<\eta_k\meet \eta_0$.
			
			\item Otherwise, if $\eta$ is not a cycle, spiral, nor a comb, we say that $\eta$ is a \emph{quasi-cycle} (or \emph{quasi-cyclic}).
		\end{itemize}
		If $\eta$ is a $k$-cycle, we say that $k$ is the \emph{period} of $\eta$; if $\eta$ is a  $k$-spiral or a $k$-comb, we say that $k$ is its \emph{spiral length}.
		
		We define the \emph{length} of $\eta$ as the size of $\{\eta_0,\ldots,\eta_n\}$.
		
		(See Figures~\ref{figure:spiral_comb} and~\ref{figure:quasi_cycle} for some examples of orbits of various types.)\xqed{\lozenge}
	\end{definition}
	
	\begin{remark}
		\label{remark:quasi_cycle_meets_cycle}
		Note that if $\eta=(\eta_0,\ldots,\eta_n)$ is a cycle or a quasi-cycle, then for all $k\leq n/2$, we have $\eta_0\meet \eta_k=\eta_k\meet \eta_{2k}$.\xqed{\lozenge}
	\end{remark}

	\begin{remark}
		\label{remark:trivial_preservation}
		If $\eta\subseteq \bar \eta$ are finite partial orbits, then if $\eta$ is a $k$-cycle, an ascending/descending $k$-spiral or $k$-comb, then so is $\bar \eta$ (respectively). It follows that a suborbit of a quasi-cycle is a quasi-cycle. On the other hand, if $\eta$ is a quasi-cycle, then it does not say much about the kind of orbit $\bar\eta$ can be (see Remark~\ref{remark:can_do_anything}).\xqed{\lozenge}
	\end{remark}
	
	\begin{remark}
		\label{remark:time_reversal}
		If $\eta=(\eta_0,\ldots,\eta_n)$ is a partial orbit, then so is $\eta^{-1}=(\eta_n,\eta_{n-1},\ldots,\eta_0)$. If we think of $\eta$ as a sequence of steps in a process, then $\eta^{-1}$ can be thought of as ``time-reversal'' of this process.\xqed{\lozenge}
	\end{remark}
	
	\begin{figure}
		\center
		\begin{tikzpicture}[main_node/.style={circle,fill=gray!40,draw,minimum size=1em,inner sep=3pt]},other_node/.style={circle,pattern=north west lines, pattern color=gray!40,draw,minimum size=1em,inner sep=3pt]}]
			\node[main_node] (root) at (3,0) {$\,$};
			
			\node[other_node] (zeta0) at (2,1) {$\zeta_0$};
			\node[other_node] (zeta1) at (4,1) {$\zeta_1$};
			
			\foreach \x in {0,1,2}
			\foreach \y in {0,1,2,3}
			{
				\pgfmathtruncatemacro{\label}{4*\x+\y}
				\pgfmathtruncatemacro{\px}{\x-1}
				\ifthenelse{\x < 2 \OR \y <2}{
					\node[main_node] (eta\x\y) at (2*\y,3+2*\x) {$\eta_{\label}$};
					\ifthenelse{\x>0}{\draw (eta\x\y)-- (eta\px\y);}{}
				}{}
				\ifthenelse{\x < 2 \OR \y = 0}{
					\node[other_node] (mu\x\y) at (2*\y+1,3+2*\x+1) {$\mu_{\label}$};
					\draw (eta\x\y)-- (mu\x\y);
				}{}
				
			}
			\foreach \x in {0,1}
			{
				\pgfmathtruncatemacro{\xp}{\x+2}
				\draw (eta0\x)--(zeta\x)--(eta0\xp);
				\draw (root)--(zeta\x);
			}
		\end{tikzpicture}
		\caption{In the above figure, the $\zeta$-orbit is a $2$-cycle, the $\eta$-orbit is an ascending $4$-spiral, while the $\mu$-orbit is an ascending $4$-comb. The root is a fixed point.}
		\label{figure:spiral_comb}
	\end{figure}
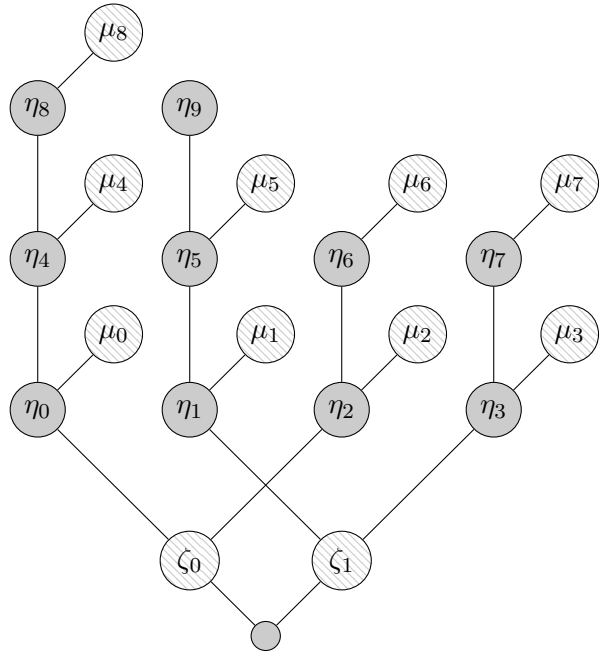

	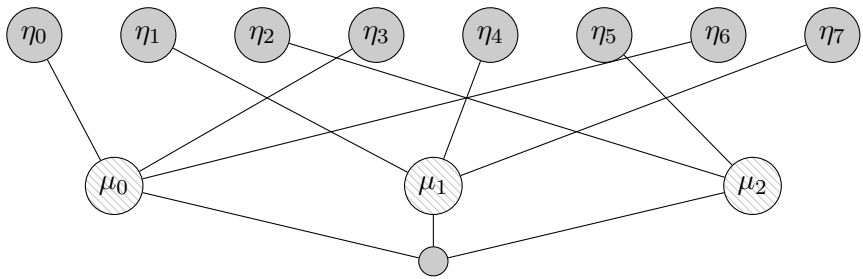
\begin{figure}
		\center
		\begin{tikzpicture}[main_node/.style={circle,fill=gray!40,draw,minimum size=1em,inner sep=3pt]},other_node/.style={circle,pattern=north west lines, pattern color=gray!40,draw,minimum size=1em,inner sep=3pt]}]
			\node[main_node] (root) at (7*1.5/2,0) {$\,$};

			\foreach \x in {0,...,7}
			{
				\node[main_node] (eta\x) at (1.5*\x,3) {$\eta_\x$};
			}
			\foreach \x in {0,...,2}
			{	
				\pgfmathsetmacro{\part}{7*1.5/10}
				\node[other_node] (mu\x) at (\part*4*\x+\part,1) {$\mu_\x$};
				\draw (root) --(mu\x);
				\foreach \y in {0,1,2}{
					\pgfmathtruncatemacro{\yp}{\x+3*\y}
					\ifthenelse{\yp<8}{
						\draw (mu\x)--(eta\yp);
					}{}
				}
				
			}
		\end{tikzpicture}
		\caption{In the above figure, the $\eta$-orbit is a quasi-cycle of pseudo-period $3$ (see Definition~\ref{definition:pseudo-period} below), the $\mu$-orbit is a $3$-cycle, while the root is a fixed point.}
		\label{figure:quasi_cycle}
	\end{figure}

	The following proposition describes the spiral behaviour of orbits.
	\begin{proposition}
		\label{proposition:spirals}
		Suppose that $\eta=(\eta_0,\ldots,\eta_n)$ is $k$-spiral. Then for $i,j,l\in [0,n]$:
		\begin{enumerate}[label=(\arabic{*})]
			\item
			$\eta_i$ is comparable to $\eta_j$ if and only if $i\equiv j\pmod k$,
			\item
			if $i\equiv j\not\equiv l\pmod k$, then $\eta_i\meet \eta_l=\eta_j\meet \eta_l$,
			\item
			if $2l\leq n$ and $\eta_0\meet \eta_l\neq \eta_l\meet \eta_{2l}$, then $k$ divides $l$ (so $\eta_0<\eta_l<\eta_{2l}$ or $\eta_0>\eta_l>\eta_{2l}$).
		\end{enumerate}
	\end{proposition}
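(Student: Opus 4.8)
The plan is to reduce every assertion to a comparison of $\eta_0$ with some $\eta_m$, using two soft inputs: each iterate $p^i$ of the step-map $\eta_t\mapsto\eta_{t+1}$ is an isomorphism onto its image, hence preserves \emph{and} reflects $\le$; and in a meet-tree the elements below a fixed point form a chain. By Remark~\ref{remark:time_reversal} I may assume $\eta$ is an \emph{ascending} $k$-spiral, the descending case being symmetric. Then $\eta_0<\eta_k$, and transporting this by $p^i$ yields the strictly increasing chains $\eta_i<\eta_{i+k}<\eta_{i+2k}<\cdots$ (for indices in $[0,n]$); in particular $\eta_i$ and $\eta_j$ are comparable whenever $i\equiv j\pmod k$, which is the ``if'' half of~(1). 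The crux of the ``only if'' half is the claim that $\eta_0\parallel\eta_r$ for every $r$ with $0<r<k$: if $\eta_0<\eta_r$ this contradicts the minimality of $k$ outright, while if $\eta_r<\eta_0$ then applying $p^{k-r}$ gives $\eta_k<\eta_{k-r}$, so $\eta_0<\eta_k<\eta_{k-r}$ with $0<k-r<k$, again contradicting minimality. Granting this, suppose $\eta_i,\eta_j$ are comparable, say with $i\le j$; since $p^i$ reflects $\le$, also $\eta_0$ and $\eta_m$ are comparable for $m=j-i$, and writing $m=qk+r$ with $0\le r<k$ the claim forces $q\ge1$; the chain $\eta_r<\eta_{r+k}<\cdots<\eta_m$ (together with transitivity when $\eta_0>\eta_m$) then shows $\eta_0$ and $\eta_r$ are comparable, so $r=0$, i.e.\ $k\mid(j-i)$. (That the $\eta_i$ are pairwise distinct is routine and is used so that ``$\eta_0$ comparable to $\eta_m$'' with $m\ne0$ means $\eta_0<\eta_m$ or $\eta_0>\eta_m$.)

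Part~(2) is then a one-line application of Fact~\ref{fact:lemmaxyz}: if $i\equiv j\not\equiv l\pmod k$ then by~(1) we have $\eta_i\le\eta_j$ (say) and $\eta_l\parallel\eta_i$, $\eta_l\parallel\eta_j$; were $\eta_l\meet\eta_j>\eta_l\meet\eta_i$, the second bullet of Fact~\ref{fact:lemmaxyz} (with $(a,b,c)=(\eta_l,\eta_j,\eta_i)$) would give $\eta_l\meet\eta_i=\eta_j\meet\eta_i=\eta_i$, i.e.\ $\eta_i\le\eta_l$, contradicting incomparability; so $\eta_l\meet\eta_j\le\eta_l\meet\eta_i$, and monotonicity of $\meet$ gives the reverse inequality.

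For~(3), first note that if $\eta_0$ and $\eta_l$ are comparable we are done by~(1) (and then~(1) also gives $\eta_0<\eta_l<\eta_{2l}$), so assume $\eta_0\parallel\eta_l$, whence $k\nmid l$; I will show this contradicts $\eta_0\meet\eta_l\ne\eta_l\meet\eta_{2l}$. Extending if necessary (every finite partial orbit lies in the orbit of a total automorphism $\sigma$, and by Remark~\ref{remark:trivial_preservation} the enlarged orbit is still a $k$-spiral), I may assume $\eta$ has length $>(k+1)l$ and write $\eta_i=\sigma^i(\eta_0)$. Put $r:=\eta_0\meet\eta_l$; then $\sigma^{ml}(r)=\eta_{ml}\meet\eta_{(m+1)l}$ for $0\le m\le k$, so $\sigma^l(r)=\eta_l\meet\eta_{2l}$, and taking $m=k$ (so $kl\equiv0$ and $(k+1)l\equiv l\not\equiv0\pmod k$) two applications of~(2) give $\eta_{kl}\meet\eta_{(k+1)l}=\eta_0\meet\eta_{(k+1)l}=\eta_0\meet\eta_l=r$. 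Hence the $\sigma^l$-orbit of $r$ is finite; but a finite orbit of a poset automorphism is a singleton or an antichain (any comparability inside it would, on iterating, produce an infinite strictly increasing subsequence of a finite set), and $r=\eta_0\meet\eta_l$ and $\sigma^l(r)=\eta_l\meet\eta_{2l}$ both lie below $\eta_l$, hence are comparable; so the orbit is $\{r\}$, i.e.\ $\eta_0\meet\eta_l=\eta_l\meet\eta_{2l}$, a contradiction.

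The main obstacle is exactly this last point: the obvious attempt to iterate the shift $\sigma^l$ and push a putative comb around the orbit fails because one is only given $2l\le n$, whereas closing the $\sigma^l$-orbit of $r$ needs indices of size $\approx kl$; enlarging the orbit removes this obstruction, and then the finite-orbit/antichain dichotomy, together with the triviality that $\eta_0\meet\eta_l$ and $\eta_l\meet\eta_{2l}$ both sit below $\eta_l$, finishes the argument. Everything else is bookkeeping with the minimality of $k$ and the three bullets of Fact~\ref{fact:lemmaxyz}.
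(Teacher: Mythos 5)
Your proof is correct, and for parts~(1) and~(2) it follows essentially the paper's route, but with a useful extra degree of care. Specifically, you explicitly prove the auxiliary claim that for an ascending $k$-spiral, $\eta_0$ is incomparable to $\eta_r$ for every $0<r<k$ (ruling out $\eta_r<\eta_0$ by applying $p^{k-r}$ and invoking minimality of $k$). The paper's argument for~(1) ends with ``$\eta_0,\eta_{j-km}$ are comparable, so by the choice of $k$, $j-km=0$,'' which is immediate only in the case $\eta_{j-km}>\eta_0$; the case $\eta_{j-km}<\eta_0$ is exactly the one your auxiliary claim handles, so your writeup fills a small gap left implicit in the paper. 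Your part~(2) uses the second bullet of Fact~\ref{fact:lemmaxyz} together with monotonicity of $\meet$ rather than the paper's single application of the same fact with the roles of $\eta_i$ and $\eta_l$ swapped, but the two are equivalent.

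For part~(3) you take a genuinely different route. The paper extends the orbit so that $N=\operatorname{lcm}(l,k)\le n$, then (in the case $l\ne N$) produces a strictly decreasing chain $\eta_0\meet\eta_l>\eta_l\meet\eta_{2l}>\cdots$ to conclude $\eta_0\meet\eta_l\ne\eta_N\meet\eta_l$, and finally applies the contrapositive of~(2) with $i=0$, $j=N$. You instead assume $\eta_0\parallel\eta_l$ toward a contradiction, extend so that $\sigma^{kl}$ is available, use two applications of~(2) to show $\sigma^{kl}(r)=r$ for $r:=\eta_0\meet\eta_l$, and then close by the observation that a finite orbit under a poset automorphism is a singleton or an antichain, while $r$ and $\sigma^l(r)$ are both $\le\eta_l$ and hence comparable. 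Both arguments require extending the orbit; yours buys a cleaner conceptual endgame (the antichain dichotomy), whereas the paper's is slightly more economical in what it extends. Both are valid, and your version integrates the auxiliary incomparability lemma from~(1) in a way that makes the whole proof self-contained.
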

	\begin{proof}
		The case of $k=1$ is straightforward, so we may assume that $k>1$. We may assume without loss of generality that $\eta$ is ascending (replacing it with $\eta^{-1}$ if necessary), and we may also assume that $n\geq jk$ (extending $\eta$ if necessary).% Let $f$ be an arbitrary tree automorphism with an orbit containing $\eta$, and put $\eta_m:=f^m(\eta_0)$ for all $m\in \mathbf Z$. It is clear that $\bar\eta=(\bar \eta_m)_{m\in \mathbf Z}$ is still an ascending $k$-spiral.
		
		(1): It is clear (by straightforward induction) that if $i\equiv j\pmod k$, then $\eta_i$ and $\eta_j$ are comparable. For the converse, we may assume without loss of generality that $i<j$, and in fact that $i=0$. Since $\eta$ is an ascending $k$-spiral, $\eta_0<\eta_{jk}$. This clearly implies that $\eta_j\not\leq \eta_0$ (otherwise, we would have $\eta_{jk}\leq \eta_0$), so by comparability, $\eta_j>\eta_0$. Let $m$ be maximal such that $km\leq j$. Since $j>0$ and $k$ is minimal such that $\eta_k>\eta_0$, we have $m>0$. Now, it is easy to see that we have $\eta_{km},\eta_j<\eta_{jk}$, so $\eta_{km},\eta_j$ are comparable. It follows that $\eta_0,\eta_{j-km}$ are comparable. But $0\leq j-km<k$, so by the choice of $k$, it follows that $j-km=0$, so $k$ divides $j$.
		
		(2): By (1), $\eta_i$ and $\eta_j$ are comparable, so without loss of generality we can assume $\eta_i\leq \eta_j$. It follows that $\eta_i\meet \eta_j=\eta_i$. Also by (1), $\eta_i,\eta_l$ are not comparable, so $\eta_i\meet\eta_l<\eta_i$. Together, we have that $\eta_i\meet \eta_j>\eta_i\meet \eta_l$, so by Fact~\ref{fact:lemmaxyz}, $\eta_i\meet \eta_l=\eta_j\meet \eta_l$.
		
%!!!
		(3): Write $N$ for the least common multiple of $l$ and $k$; we may assume without loss of generality that $N\leq n$ (extending $\eta$ if necessary).
		
		Consider for simplicity the case when $\eta_0\meet \eta_l>\eta_l\meet \eta_{2l}$ (the other case is similar). If $l=N$, we are done. Otherwise, by straightforward induction, $\eta_0\meet \eta_l>\eta_l\meet \eta_{N}$, so in particular $\eta_0\meet \eta_l\neq \eta_N\meet \eta_l$. But $k$ divides both $0$ and $N$, so by (2) we conclude that $k$ divides $l$. (This also contradicts the hypothesis that $l\neq N$.)
	\end{proof}
	
	\begin{proposition}
		\label{proposition:needed_comb}
		Let $\eta=(\eta_0,\ldots,\eta_n)$ be a partial orbit. Take any positive $k\leq n/2$ and integers $i_1\equiv i_2\equiv j_1\equiv j_2\pmod k$ such that $i_1<i_2$ and $j_1<j_2$.
		
		If $\eta_0\meet \eta_k<\eta_k\meet \eta_{2k}$ (in particular, if $\eta$ is an ascending $k$-spiral or $k$-comb), then:
		\begin{enumerate}
			\item
			\begin{enumerate}
				\item
				\label{it:needed_comb_1a}
				$\eta_{i_1}\meet \eta_{i_2}=\eta_{i_1}\meet \eta_{i_1+k}$;
				\item
				\label{it:needed_comb_1b}
				$\otp(\eta_{i_1}\meet \eta_{i_2},\eta_{j_1}\meet \eta_{j_2})=\otp(i_1,j_1)$.
			\end{enumerate}
		\end{enumerate}
		
		Likewise, if $\eta_0\meet \eta_k>\eta_k\meet \eta_{2k}$ (in particular, if $\eta$ is a descending $k$-spiral or $k$-comb), then:
		\begin{enumerate}[resume]
			\item
			\begin{enumerate}
				\item
				\label{it:needed_comb_2a}
				$\eta_{i_1}\meet \eta_{i_2}=\eta_{i_2-k}\meet \eta_{i_2}$;
				\item
				\label{it:needed_comb_2b}
				$\otp(\eta_{i_1}\meet \eta_{i_2},\eta_{j_1}\meet \eta_{j_2})=\otp(j_2,i_2)$.
			\end{enumerate}
		\end{enumerate}
	\end{proposition}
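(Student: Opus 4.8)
The plan is to prove \ref{it:needed_comb_1a}--\ref{it:needed_comb_1b} directly and then deduce \ref{it:needed_comb_2a}--\ref{it:needed_comb_2b} from them by passing to the time-reversed orbit. For the reduction, write $p$ for the partial automorphism $\eta_i\mapsto\eta_{i+1}$ ($i<n$) and suppose $\eta_0\meet\eta_k>\eta_k\meet\eta_{2k}$. Since $k\le n/2$, the iterate $p^{n-2k}$ is a partial automorphism with $\eta_0,\eta_k,\eta_{2k}$ in its domain, so applying it to the quantifier-free inequality above gives $\eta_{n-2k}\meet\eta_{n-k}>\eta_{n-k}\meet\eta_n$; that is, the reversed orbit $\eta^{-1}=(\eta_n,\eta_{n-1},\ldots,\eta_0)$ of Remark~\ref{remark:time_reversal} satisfies the ascending hypothesis. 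Applying \ref{it:needed_comb_1a}--\ref{it:needed_comb_1b} (once proved) to $\eta^{-1}$ at the indices $n-i_2<n-i_1$ and $n-j_2<n-j_1$ (still pairwise congruent mod $k$ and lying in $[0,n]$) and then translating back along $\ell\mapsto n-\ell$ --- which reverses both the order of indices and the direction of $\otp$ --- yields exactly \ref{it:needed_comb_2a}--\ref{it:needed_comb_2b} for $\eta$. So from now on I assume $\eta_0\meet\eta_k<\eta_k\meet\eta_{2k}$.

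The core of the argument is the following claim: for a fixed residue $r\in\{0,\ldots,k-1\}$, writing $e_m:=\eta_{r+mk}\meet\eta_{r+(m+1)k}$ whenever $r+(m+1)k\le n$, the sequence $(e_m)_m$ is strictly increasing, and $\eta_{r+sk}\meet\eta_{r+tk}=e_s$ whenever $s<t$ and $r+tk\le n$. Strict monotonicity is immediate: applying the partial automorphism $p^{r+mk}$ to the inequality $\eta_0\meet\eta_k<\eta_k\meet\eta_{2k}$ gives $e_m<e_{m+1}$, valid exactly when $r+(m+2)k\le n$, i.e.\ when $e_{m+1}$ is defined. For the identity I would induct on $t-s$: the case $t=s+1$ is the definition, and for $t>s+1$ the induction hypothesis gives $\eta_{r+sk}\meet\eta_{r+(t-1)k}=e_s$ while $\eta_{r+(t-1)k}\meet\eta_{r+tk}=e_{t-1}>e_s$, so the second bullet of Fact~\ref{fact:lemmaxyz} applied with $a=\eta_{r+(t-1)k}$, $b=\eta_{r+tk}$, $c=\eta_{r+sk}$ yields $\eta_{r+sk}\meet\eta_{r+tk}=\eta_{r+sk}\meet\eta_{r+(t-1)k}=e_s$.

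Granting the claim, the proposition falls out. Write $i_1=r+a_1k$, $i_2=r+a_2k$, $j_1=r+b_1k$, $j_2=r+b_2k$ with common residue $r$ and $0\le a_1<a_2$, $0\le b_1<b_2$; then $i_1+k=r+(a_1+1)k\le r+a_2k=i_2\le n$, and likewise $j_1+k\le j_2\le n$, so every meet below is within range. By the claim, $\eta_{i_1}\meet\eta_{i_2}=e_{a_1}=\eta_{i_1}\meet\eta_{i_1+k}$, which is \ref{it:needed_comb_1a}, and likewise $\eta_{j_1}\meet\eta_{j_2}=e_{b_1}$; since $(e_m)_m$ is strictly increasing, $\otp(e_{a_1},e_{b_1})=\otp(a_1,b_1)=\otp(i_1,j_1)$, which is \ref{it:needed_comb_1b}.

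I expect the only delicate point to be the inductive meet computation --- in particular, assigning the correct roles to the three elements when invoking Fact~\ref{fact:lemmaxyz} (the orientation of the inequality is what forces the ``smaller index determines the meet'' phenomenon). Everything else is index bookkeeping, and the standing hypotheses $k\le n/2$, $i_1<i_2$ and $j_1<j_2$ are exactly what keeps all the relevant indices inside $[0,n]$, so no extension of $\eta$ to a longer orbit is needed.
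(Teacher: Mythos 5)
Your proof is correct and is essentially the same argument as the paper's: the paper proves \ref{it:needed_comb_1a} by induction on $i_2-i_1$ using the second bullet of Fact~\ref{fact:lemmaxyz}, then derives \ref{it:needed_comb_1b} by a ``straightforward induction'' which is precisely the strict monotonicity of your sequence $(e_m)$, and disposes of the descending case by time reversal exactly as you do. Your only departure is organizational --- packaging the two inductions into a single claim about $(e_m)_m$ --- which is a clean way to present the same underlying computation.
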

	\begin{proof}
		We will consider the ascending case (the descending case is completely analogous, and in fact it follows by considering the ``time reversal''). \eqref{it:needed_comb_1a} follows by induction with respect to $i_2-i_1$; if $i_2-i_1=k$, then the conclusion is trivial. Otherwise, if $i_2>i_1+k$, by the induction hypothesis, we have that $\eta_{i_1+k}\meet \eta_{i_2}=\eta_{i_1+k}\meet \eta_{i_1+2k}$. Since $\eta_0\meet \eta_k<\eta_k\meet \eta_{2k}$, also $\eta_{i_1}\meet \eta_{i_1+k}<\eta_{i_1+k}\meet \eta_{i_1+2k}$, whence $\eta_{i_1}\meet \eta_{i_1+k}<\eta_{i_1+k}\meet \eta_{i_2}$, and the conclusion follows by Fact~\ref{fact:lemmaxyz}.
		
		For \eqref{it:needed_comb_1b}, by \eqref{it:needed_comb_1a}, we may assume without loss of generality that $i_2=i_1+k$ and $j_2=j_1+k$. We may also assume without loss of generality that $i_1<j_1$. Straightforward induction shows that $\eta_{i_1}\meet \eta_{i_1+k}<\eta_{j_1}\meet \eta_{j_1+k}$, which finishes the proof.
	\end{proof}

	\begin{remark}
		\label{remark:suff_p_aut}
		If $p$ is a partial automorphism of a first-order structure $C$ and $a,b\in C$, then $p\cup\{(a,b)\}$ is a partial automorphism of $C$ if and only if $\qftp(b/\range(p))=p(\qftp(a/\dom(p))$.\xqed{\lozenge}
	\end{remark}

	\begin{definition}
		\label{definition:pseudo-period}
		Given a finite partial orbit $\eta=(\eta_0,\ldots,\eta_n)$ ($n>0$), its \emph{pseudo-period} is the smallest $u>0$ such that $\eta_0\meet \eta_u=\max_{0<i\leq n} \eta_0\meet \eta_i$.\xqed{\lozenge}
	\end{definition}
	
	\begin{remark}
		Note that if $\eta$ is a cycle, then its period is also its pseudo-period.\xqed{\lozenge}
	\end{remark}
	
	\begin{proposition}
		\label{proposition:qc_time_reversal} The pseudo-period is invariant under time reversal.
		
		More precisely, if $\eta=(\eta_0,\ldots,\eta_n)$ is a partial orbit of pseudo-period $u$, then $u$ is also the pseudo-period of $\eta^{-1}=(\eta_n,\eta_{n-1},\ldots,\eta_1,\eta_0)$, i.e.\ it is the smallest $m_0>0$ such that $\eta_{n-m_0}\meet \eta_n=\max_{0<m\leq n} \eta_n\meet \eta_{n-m}$.
	\end{proposition}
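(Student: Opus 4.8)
The plan is to reduce, by symmetry together with the shift‑equivariance of orbits, to a single one‑sided maximality statement, and then to analyse that statement using Fact~\ref{fact:lemmaxyz} and a case split on the type of $\eta$ (Definition~\ref{definition:orbit_types}).

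First I would set things up. Since every finite partial orbit lies in the orbit of some \emph{total} automorphism $g$ of a meet-tree extending the ambient one (cf.\ the remark after Definition~\ref{definition:partial_orbit}), and since the pseudo-period refers only to $\eta_0,\dots,\eta_n$, I may assume $\eta=(\eta_i)_{i\in\bZ}$ with $\eta_{i+1}=g(\eta_i)$ for a total automorphism $g$; write $M_0:=\eta_0\meet\eta_u=\max_{0<i\le n}\eta_0\meet\eta_i$. Time-reversal is an involution on partial orbits interchanging $\eta$ and $\eta^{-1}$ together with their pseudo-periods, so it suffices to prove, for every partial orbit, that $\operatorname{ppd}(\eta^{-1})\le u$; unwinding the definitions, this says exactly that $n-u$ realises the maximum for $\eta^{-1}$, i.e.\ $\eta_{n-u}\meet\eta_n\ge\eta_j\meet\eta_n$ for all $0\le j<n$.

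The key reduction is the following. Writing $j=n-i$ ($1\le i\le n$) and applying the automorphism $g^{\,n-u}$, we get $\eta_{n-i}\meet\eta_n=g^{\,n-u}(\eta_{u-i}\meet\eta_u)$ and $\eta_{n-u}\meet\eta_n=g^{\,n-u}(\eta_0\meet\eta_u)$, so the desired inequality is equivalent to $\eta_{u-i}\meet\eta_u\le\eta_0\meet\eta_u=M_0$. Both sides lie $\le\eta_u$, hence are comparable, so I argue by contradiction and assume $\eta_{u-i}\meet\eta_u>M_0$; then the second bullet of Fact~\ref{fact:lemmaxyz} (with $a=\eta_u$, $b=\eta_{u-i}$, $c=\eta_0$) gives $\eta_0\meet\eta_{u-i}=\eta_0\meet\eta_u=M_0$. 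If $1\le i<u$ then $0<u-i<u$, contradicting minimality of the pseudo-period. If $i>u$, set $m:=i-u\in[1,\,n-u]$ and apply $g^{\,m}$: the equality $\eta_0\meet\eta_{u-i}=M_0$ becomes $\eta_0\meet\eta_m=\eta_m\meet\eta_{m+u}=g^{\,m}(M_0)$, and the assumption $\eta_{u-i}\meet\eta_u>M_0$ becomes $\eta_0\meet\eta_i>g^{\,m}(M_0)$; since $\eta_0\meet\eta_i\le M_0$ (as $i\le n$), this forces the \emph{strict} inequality $g^{\,m}(M_0)<M_0$, together with $\eta_0\meet\eta_m=g^{\,m}(M_0)$.

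It remains --- and this is the main point --- to rule out $g^{\,m}(M_0)<M_0$ for $1\le m\le n-u$, which I would do by a case split on the type of $\eta$. If $\eta$ is a $k$-spiral or a $k$-comb (ascending or descending), this case does not occur: one has $u=k$, and Propositions~\ref{proposition:spirals} and~\ref{proposition:needed_comb} give $\eta_0\meet\eta_i=\eta_0\meet\eta_{i-u}$ (with the understanding that for descending types the meets decrease, so $\le$ suffices), contradicting $\eta_0\meet\eta_i>g^{\,m}(M_0)=\eta_0\meet\eta_{i-u}$; in fact for these types the same propositions show directly that $\operatorname{ppd}(\eta)=k$ and that time-reversal sends a $k$-spiral (resp.\ $k$-comb) to one of the same length, so $\operatorname{ppd}(\eta^{-1})=k$. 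If $\eta$ is a cycle or a quasi-cycle, I invoke Remark~\ref{remark:quasi_cycle_meets_cycle}. When $2u\le n$ it gives $g^{\,u}(M_0)=\eta_u\meet\eta_{2u}=\eta_0\meet\eta_u=M_0$, so $M_0$ is fixed by $g^{\,u}$; a strict drop $g^{\,m}(M_0)<M_0$ would then produce the strictly descending chain $M_0>g^{\,m}(M_0)>g^{\,2m}(M_0)>\dots$, contradicting $g^{\,um}(M_0)=(g^{\,u})^{m}(M_0)=M_0$. When $2u>n$ we have $m\le n-u<n/2$, so $2m\le n$, and Remark~\ref{remark:quasi_cycle_meets_cycle} (for $k=m$) gives $g^{\,m}(M_0)=\eta_0\meet\eta_m=\eta_m\meet\eta_{2m}=g^{\,m}(\eta_0\meet\eta_m)=g^{\,2m}(M_0)$, again contradicting $g^{\,2m}(M_0)<g^{\,m}(M_0)$. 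Everything preceding this final step is a formal consequence of $\meet$-preservation under $g$ and of Fact~\ref{fact:lemmaxyz}; I expect the orbit-type bookkeeping here (and in particular the split $2u\le n$ vs.\ $2u>n$) to be the only genuinely delicate part.
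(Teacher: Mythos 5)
Your overall strategy diverges from the paper's at the ``by symmetry'' step: the paper reduces to the easy inequality $\operatorname{ppd}(\eta^{-1})\ge u$, which it closes in three lines (for $0<m<u$, minimality gives $\eta_0\meet\eta_{u-m}<\eta_0\meet\eta_u$; apply $g^{n-u}$; the second bullet of Fact~\ref{fact:lemmaxyz} then identifies $\eta_{n-u}\meet\eta_{n-m}$ with $\eta_n\meet\eta_{n-m}$). You instead reduce to $\operatorname{ppd}(\eta^{-1})\le u$, which is the positive statement that $n-u$ realises the maximum, and this forces you into an orbit-type case analysis. Your reduction to ruling out $g^m(M_0)<M_0$ (together with $\eta_0\meet\eta_m=g^m(M_0)$) is clean, and your treatment of cycles and quasi-cycles via Remark~\ref{remark:quasi_cycle_meets_cycle} is correct, including the careful split on $2u\le n$ versus $2u>n$.

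However, the spiral/comb branch has a genuine gap. You assert ``one has $u=k$'' for a $k$-spiral or $k$-comb. For spirals this is true and easy to check (e.g.\ for an ascending $k$-spiral $M_0=\eta_0$, and Proposition~\ref{proposition:spirals}(1) shows the maximum is first achieved at $k$). For combs the claim is not proved anywhere in the paper, and it does not follow from the cited propositions: Proposition~\ref{proposition:needed_comb} only compares meets $\eta_{i_1}\meet\eta_{i_2}$ with all indices in a \emph{common} residue class mod $k$, and says nothing about $\eta_0\meet\eta_j$ for $j\not\equiv 0\pmod k$, which is exactly what controls the pseudo-period. Establishing $u=k$ for combs actually requires a separate analysis of the initial quasi-cyclic segment $(\eta_0,\dots,\eta_{2k-1})$ and its pseudo-period (Remark~\ref{remark:can_do_anything}, which would give $u_0\mid k$, is explicitly unproved in the paper). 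Moreover, even granting $u=k$, your intermediate step ``Propositions~\ref{proposition:spirals} and~\ref{proposition:needed_comb} give $\eta_0\meet\eta_i=\eta_0\meet\eta_{i-u}$'' is not licensed for combs when $i\not\equiv 0\pmod k$: you need $0$, $i$, $i-k$ to lie in a single residue class for Proposition~\ref{proposition:needed_comb} to apply, and $i\not\equiv 0$ breaks this. So the comb case is incomplete as written; you would need an additional argument (or you should simply go the paper's way round and prove the $\ge$ inequality, which needs none of this machinery).
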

	\begin{proof}
		Let $u'$ be the pseudo-period of $\eta^{-1}$. By symmetry, it is enough to show that $u'\geq u$. This is equivalent to saying that for all positive $m<u$, we have $\eta_n\meet \eta_{n-m}<\eta_n\meet \eta_{n-u}$. Note that for such $m$ we have $0<u-m<u$, so $\eta_0\meet \eta_{u-m}<\eta_0\meet \eta_u$. Now, let $f$ be an automorphism of a meet-tree such that for $f(\eta_i)=\eta_{i+1}$ for $i=0,\ldots, n-1$. By applying $f^{n-u}$, we obtain that $\eta_{n-u}\meet \eta_{n-m}<\eta_{n-u}\meet \eta_n$. By Fact~\ref{fact:lemmaxyz}, it follows that $\eta_{n-u}\meet \eta_{n-m}=\eta_n\meet \eta_{n-m}$ and so we are done.
	\end{proof}
	
	\begin{proposition}
		\label{proposition:baby_cycle}
		Suppose $\eta=(\eta_0,\ldots,\eta_n)$ is a quasi-cycle of pseudo-period $u$. Then either $n\geq 2u$ or $\eta$ can be extended to $\bar\eta=(\eta_0,\ldots,\eta_n,\eta_{n+1},\ldots,\eta_{2u})$, a partial orbit such that $\eta_{2u}=\eta_0$ (so $\bar\eta$ is a $2u$-cycle).
		
		As a consequence, $\eta$ is an orbit of a finite partial automorphism $p$ such that $\eta_0\meet \eta_u$ is a fixed point of $p^u$ (in particular, $p^u(\eta_0\meet \eta_u)$ is defined).
	\end{proposition}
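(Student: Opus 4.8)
Since the statement is a disjunction, I would assume $u\le n<2u$ (note $u\le n$, since $\eta_0\meet\eta_u$ must be defined). As observed after Definition~\ref{definition:partial_orbit}, $\eta$ sits inside the orbit of some total automorphism $f$ of a meet-tree $T$, so write $\eta_i=f^i(\eta_0)$ for all $i\in\bZ$ and set $m:=\eta_0\meet\eta_u$; by the definition of pseudo-period, $m=\max_{0<i\le n}\eta_0\meet\eta_i$. The goal is to produce an extension $(\eta_0,\ldots,\eta_{2u-1})$ of $\eta$ whose cyclic shift $\eta_i\mapsto\eta_{i+1\bmod 2u}$ is a partial automorphism, for then $\bar\eta=(\eta_0,\ldots,\eta_{2u-1},\eta_0)$ is the required $2u$-cycle.

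First I would analyse the tree near $m$. For $0<i<u$, minimality of the pseudo-period forces $\eta_0\meet\eta_i<m=\eta_0\meet\eta_u$, so Fact~\ref{fact:lemmaxyz} gives $\eta_i\meet\eta_u=\eta_i\meet\eta_0<m$; in particular $m<\eta_0,\eta_u$ and $\eta_i\not\ge m$. Hence $\eta_0$ and $\eta_u$ have the same meet, lying strictly below $m$, with each $\eta_i$ ($0<i<u$), and therefore (a routine induction with $\meet$-associativity) with every element of a finite substructure $A$ containing $\eta_1,\ldots,\eta_{u-1}$, the element $m$, and everything of the given structure that lies at or below $m$; since neither $\eta_0$ nor $\eta_u$ lies below any element of $A$ and $\max_{x\in A}x\meet\eta_0=m=\max_{x\in A}x\meet\eta_u$, Fact~\ref{fact:typesintrees} yields $\qftp(\eta_0/A)=\qftp(\eta_u/A)$. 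Informally: above $m$ the orbit meets exactly two cones, one containing $\eta_0$ and one containing $\eta_u$, while $\eta_1,\ldots,\eta_{u-1}$ branch off below $m$, and $\eta_0$, $\eta_u$ play interchangeable roles there.

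Transporting this symmetry by the partial automorphism carried by $\eta$, the part of $T$ at and above $m$ is $u$-periodic, with the period map swapping the cones of $\eta_0$ and $\eta_u$. I would then build the extension by inserting $\eta_{n+1},\ldots,\eta_{2u-1}$ as the period-$u$ continuation of $\eta_1,\ldots,\eta_{u-1}$ — each new $\eta_{u+j}$ (for $n-u<j<u$) placed in the image of the cone of $\eta_j$ under the period map, the meets among the $\eta$'s below $m$ being inherited unchanged from the given structure on the substructure generated by $\{\eta_0,\ldots,\eta_n\}$. Since the period map carries the cone of $\eta_0$ to that of $\eta_u$ and then back, this forces $\eta_{2u}=\eta_0$. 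To realize this inside an honest meet-tree with automorphism I would either extend the carried partial automorphism one point at a time, using ultrahomogeneity of $\bT$ and Remark~\ref{remark:suff_p_aut}, the only non-free requirement being the final loop closure, which the type equality above makes consistent, or appeal to the amalgamation property of Lemma~\ref{lemma:AP_in_trees_with_automorphisms}. I expect this closure/gluing step to be the main obstacle: cone by cone the construction is clear, but one must check that the meets between orbit points lying in different cones are consistently defined and preserved by the shift, which is exactly where Fact~\ref{fact:lemmaxyz} and the orbit analysis of Section~\ref{section:Orbits in meet-trees} do the work.

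For the closing clause: if $n\ge 2u$, take $p$ to be a finite partial automorphism having $\eta$ as an orbit (the one carried by $\eta$, inside a suitable finite meet-tree); then $m\in\dom(p)$, and the iterates $p^j(m)=\eta_j\meet\eta_{j+u}$ for $j\le u-1$ all lie in $\dom(p)$ (using $n\ge 2u$), so $p^u(m)=\eta_u\meet\eta_{2u}$ is defined, and it equals $\eta_0\meet\eta_u=m$ by Remark~\ref{remark:quasi_cycle_meets_cycle} with $k=u\le n/2$. If instead $n<2u$, take $p$ to be the total automorphism of the meet-tree generated by the $2u$-cycle $\bar\eta$ just built; then $p^u(m)=\eta_u\meet\eta_{2u}=\eta_u\meet\eta_0=m$ because $\eta_{2u}=\eta_0$.
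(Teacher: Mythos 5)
Your overall shape matches the paper — reduce to $u\le n<2u$, extend via a total automorphism $f$ with $\eta_i=f^i(\eta_0)$, close the loop at step $2u$ — and your argument for the ``as a consequence'' clause is correct. But the middle of the argument has a genuine gap, and you have in fact flagged it yourself: the ``closure/gluing step'' you call the ``main obstacle'' is the entire mathematical content of the proposition, and the proposal does not carry it out.

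Two of the structural claims you lean on are unjustified as stated. You assert that ``the part of $T$ at and above $m$ is $u$-periodic, with the period map swapping the cones of $\eta_0$ and $\eta_u$.'' This is not true of the ambient $(T,f)$: the map $f^u$ sends the cone of $\eta_u$ to the cone of $\eta_{2u}=f^{2u}(\eta_0)$, and a priori $\eta_0\meet\eta_{2u}$ can lie strictly below $m$, putting $\eta_{2u}$ in a fresh cone. That the partial orbit can be re-routed so that it returns to $\eta_0$ after $2u$ steps is precisely what has to be proved; assuming $T$ is already $u$-periodic above $m$ assumes the conclusion. Likewise, the claim that above $m$ the orbit meets exactly two cones holds for $(\eta_0,\ldots,\eta_{2u-1})$, but your proposal only checks $\eta_i\not\ge m$ for $0<i<u$; for $u<i<2u$ it requires an additional argument (this is the first paragraph of the Claim in the paper's proof, using Proposition~\ref{proposition:qc_time_reversal} and Fact~\ref{fact:lemmaxyz} to show $\eta_0\meet\eta_i<m$).

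The paper's proof avoids any construction of a new tree. It stays inside $(T,f)$, proves by a somewhat delicate induction that the longer tuple $(\eta_0,\ldots,\eta_{2u-1})$ is \emph{still} a quasi-cycle of pseudo-period $u$, and then closes the loop by checking, via Fact~\ref{fact:typesintrees}, that $\qftp(\eta_0/\eta_1,\ldots,\eta_{2u-1})=\qftp(\eta_{2u}/\eta_1,\ldots,\eta_{2u-1})$, which by Remark~\ref{remark:suff_p_aut} means $\eta_{2u-1}\mapsto\eta_0$ extends the shift to a partial automorphism. Your type computation — $\qftp(\eta_0/A)=\qftp(\eta_u/A)$ for a set $A$ containing $\eta_1,\ldots,\eta_{u-1}$ and things at or below $m$ — is a weaker statement than what is needed: the loop closure must be consistent with \emph{all} of $\eta_1,\ldots,\eta_{2u-1}$, including the newly adjoined ones, and the meet identities among those points are exactly where the real work lies.
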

	\begin{proof}
		Note first that the ``as a consequence'' part follows easily from the statement by Remark~\ref{remark:quasi_cycle_meets_cycle}.
		
		Let $f$ be a meet-tree automorphism, one of whose orbits includes $\eta$. For each $i>n$, put $\eta_i:=f^i(\eta_0)$. We may assume without loss of generality that $n+1\leq 2u$ (otherwise, the conclusion is trivial).
		
		\begin{claim*}
			$(\eta_0,\ldots,\eta_{2u-1})$ is a quasi-cycle of pseudo-period $u$.
		\end{claim*}
		\begin{clmproof}
			First, to show that the pseudo-period is the same, notice that for $u<i<2u$ we have (by Proposition~\ref{proposition:qc_time_reversal} applied to the partial orbit $(\eta_{i-u},\ldots, \eta_i)$) $\eta_{i-u}\meet \eta_i>\eta_u\meet \eta_i$, so (by Fact~\ref{fact:lemmaxyz} and another application of Proposition~\ref{proposition:qc_time_reversal}) $\eta_u\meet \eta_i=\eta_u\meet \eta_{i-u}<\eta_0\meet \eta_u$, and hence $\eta_0\meet \eta_i=\eta_u\meet \eta_i<\eta_0\meet \eta_u$.
			
			We will prove by induction that $(\eta_0,\ldots,\eta_m)$ is a quasi-cycle, where $m<2u$. If $m\leq n$, then there is nothing to prove. Suppose now that $n\leq m<2u-1$ and we know that $(\eta_0,\ldots,\eta_m)$ is a quasi-cycle, and we show that so is $(\eta_0,\ldots,\eta_{m+1})$.
			
			First, we show that $\eta_{m+1}\not\geq \eta_0$ --- the argument for $\eta_{m+1}\not\leq \eta_{0}$ is symmetric, in light of Proposition~\ref{proposition:qc_time_reversal}, and the two together show that we have neither a cycle nor a spiral. Note that since $m+1<2u$, we have $m+1-u<u$. It follows immediately that $\eta_0\meet \eta_u>\eta_0\meet \eta_{m+1-u}$, and by Proposition~\ref{proposition:qc_time_reversal}, it also follows that $\eta_{m+1}\meet \eta_{m+1-u}>\eta_{m+1}\meet \eta_u$. This clearly implies that either $\eta_0\meet \eta_{m+1-u}< \eta_{m+1}\meet \eta_{m+1-u}$ or $\eta_{m+1}\meet \eta_u< \eta_0\meet \eta_u$ (having in mind that the two sides of each inequality are comparable by semilinearity).
			
			Suppose first that $\eta_0\meet \eta_{m+1-u}< \eta_{m+1}\meet \eta_{m+1-u}$. By Fact~\ref{fact:lemmaxyz}, $\eta_0\meet \eta_{m+1-u}=\eta_0\meet \eta_{m+1}$. But note that since $\eta$ is a quasi-cycle (and $m+1-u\leq n$), $\eta_0\meet \eta_{m+1-u}<\eta_0$, so it follows that $\eta_0\not\leq \eta_{m+1}$. Otherwise, if $\eta_{m+1}\meet \eta_u< \eta_0\meet \eta_u$, then analogously $\eta_{m+1}\meet \eta_u=\eta_0\meet \eta_{m+1}=\eta_0\meet \eta_u\meet \eta_{m+1}$, and since $\eta_0>\eta_0\meet \eta_u$ (because $\eta$ is a quasi-cycle), we have $\eta_0>\eta_0\meet \eta_u\meet \eta_{m+1}=\eta_0\meet \eta_{m+1}$, so $\eta_{m+1}\not\geq \eta_0$.
			
			Now, if, in addition, we have that $m$ is odd, i.e.\ $m+1=2c$ for some $c<u$, then we need to show that $(*)\,\eta_0\meet \eta_c=\eta_c\meet \eta_{2c}$. Since $c<u\leq m=2c-1$, we have $2c>u$, so $c>u-c$, whence $m=2c-1\geq 2(u-c)$. Thus, by the induction hypothesis, $\eta_0\meet\eta_{u-c}=\eta_{u-c}\meet \eta_{2u-2c}$. On the other hand, $0<u-c<u$, so $\eta_0\meet \eta_{u-c}<\eta_0\meet \eta_u$, whence $\eta_0\meet \eta_{u-c}=\eta_u\meet \eta_{u-c}$, so $\eta_u\meet \eta_{u-c}=\eta_{u-c}\meet \eta_{2u-2c}$. By applying $f^{2c-u}$, we obtain $(**)\,\eta_{2c}\meet \eta_c=\eta_c\meet \eta_u$. But since $c<u$, we have also $\eta_0\meet \eta_c<\eta_0\meet \eta_u$, so by another application of Fact~\ref{fact:lemmaxyz}, $(***)\,\eta_0\meet \eta_c=\eta_u\meet \eta_c$. $(**)$ and $(***)$ together imply $(*)$, which shows that $(\eta_0,\ldots,\eta_{2u-1})$ is a quasi-cycle, completing the proof of the claim.
		\end{clmproof}
		
		We will show that $\bar\eta=(\eta_0,\ldots,\eta_{2u-1},\eta_0)$ is a partial orbit (which is clearly sufficient).
		
		In light of the claim, we may assume that $n=2u-1$. By Remark~\ref{remark:suff_p_aut}, to complete the proof, it is enough to show that $\qftp(\eta_0/\eta_1,\ldots,\eta_{2u-1})=\qftp(\eta_{2u}/\eta_1,\ldots,\eta_{2u-1})$. Since $(\eta_0,\ldots,\eta_n)$ is a quasi-cycle, it follows that $\eta_0,\eta_u>\eta_0\meet\eta_u$, and hence also $\eta_u,\eta_{2u}>\eta_{u}\meet\eta_{2u}$. We will show that for every $1\leq i,j\leq n$, we have that $b:=\eta_i\meet \eta_j<\eta_0\meet \eta_u$ if and only if $b<\eta_u$, if and only if $b<\eta_u\meet \eta_{2u}$. Since $\eta$ is a quasi-cycle, it easily follows that we cannot have $b>\eta_u$, so by Fact~\ref{fact:typesintrees} (with $a=\eta_u$ and $b'$ equal to $\eta_0\meet \eta_u$ or $\eta_u\meet \eta_{2u}$, using Proposition~\ref{proposition:qc_time_reversal}), it will follow that the types are equal as required.
		
		It is clear that if $b<\eta_0\meet \eta_u$ or $b<\eta_u\meet \eta_{2u}$, then $b<\eta_u$, so suppose that $b<\eta_u$, and let us show that $b<\eta_u\meet \eta_{2u}$ and $b<\eta_0\meet \eta_{u}$. Note that if $b=\eta_i\meet \eta_j$, then by Fact~\ref{fact:lemmaxyz}, it follows that either $b=\eta_u\meet \eta_i$ or $b=\eta_u\meet \eta_j$. Without loss of generality we may assume the former.
		
		Now, since $b\neq \eta_u$, we have $i\neq u$. Let us consider the case when $i>u$ (the other case is analogous). Write $k$ for $i-u$, so that $i=u+k$. Clearly, $\eta_0\meet \eta_u>\eta_0\meet \eta_k$, so $\eta_u\meet \eta_{2u}>\eta_{u+k}\meet \eta_u=\eta_i\meet \eta_u$. Furthermore, $0<k<u$, so $\eta_0\meet \eta_u>\eta_0\meet \eta_{u-k}$, so $\eta_0\meet \eta_{u-k}=\eta_u\meet \eta_{u-k}$. whence $\eta_k\meet \eta_u=\eta_{u+k}\meet \eta_u$. But $\eta_0\meet \eta_u>\eta_0\meet \eta_k$, so $\eta_0\meet \eta_k=\eta_u\meet \eta_k=\eta_{u+k}\meet \eta_u=\eta_i\meet \eta_u$ (since $u+k=i$) and we are done.
	\end{proof}
	
	\begin{proposition}
		\label{proposition:needed_qc}
		Let $\eta=(\eta_0,\ldots,\eta_n)$ be a quasi-cycle with pseudo-period $u$.
		
		Then if $i,j,k\in \{0,\ldots,n\}$ satisfy $i\equiv j\pmod u$ and $k\neq i,j$, then $\eta_i\meet \eta_k=\eta_j\meet \eta_k$.
	\end{proposition}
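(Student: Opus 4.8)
The plan is to transport everything, via the automorphism realising the orbit, into statements about the ``central'' element $\gamma := \eta_0\meet\eta_u$ and its translates, and then to use the tree structure (the ``tripod'' consequence of Fact~\ref{fact:lemmaxyz}: among the three pairwise meets of any three points, two coincide and are $\le$ the third) to force the required equalities. By Proposition~\ref{proposition:baby_cycle}, $\eta$ is an orbit of a finite partial automorphism $p$ with $p^{u}(\gamma)=\gamma$; extending $p$ to some $f\in\Aut(\bT)$ (after embedding the ambient tree into $\bT$), we get $f(\eta_i)=\eta_{i+1}$ for $i<n$ and $f^{u}(\gamma)=\gamma$. Put $\gamma_i:=f^{i}(\gamma)$; then $f^{u}(\gamma_i)=\gamma_i$, so $\gamma_{i+u}=\gamma_i$ for all $i$.

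\textbf{Basic observations.} I would first record: (i) for $0\le s<t\le n$ one has $\eta_s\meet\eta_t\le\gamma_s$ — apply $f^{-s}$ and use that $\gamma=\eta_0\meet\eta_u$ is the maximum of the $\eta_0\meet\eta_i$ by Definition~\ref{definition:pseudo-period}; (ii) for $s\in[0,n]$ and $m\ge1$ with $s+mu\le n$, $\eta_s\meet\eta_{s+mu}=\gamma_s$ — the case $m=1$ is immediate from $\eta_s\meet\eta_{s+u}=f^{s}(\gamma)$, and the general case follows by induction from (i) and Fact~\ref{fact:lemmaxyz}; (iii) a partial converse: for $0<t\le n$, if $\eta_0\meet\eta_t=\gamma$ then $u\mid t$ — by descent, namely from $\eta_0\meet\eta_t=\gamma$ with $t>u$ one deduces (using the first bullet of Fact~\ref{fact:lemmaxyz}, (i) for $\eta_0\meet\eta_{t-u}$, and injectivity of $f^{u}$) that $\eta_0\meet\eta_{t-u}=\gamma$, and iterating would otherwise contradict minimality of the pseudo-period $u$.

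\textbf{Reduction and main case.} Since the statement is symmetric in $i,j$ and trivial for $i=j$, assume $i<j$, say $j=i+mu$. If $k=i+lu$ for some $0<l<m$, then $\eta_i\meet\eta_k=\gamma_i=\eta_j\meet\eta_k$ directly from (ii) and $\gamma_{i+lu}=\gamma_i$; otherwise $k\notin\{i,i+u,\dots,j\}$ and, granting the case $m=1$, one chains $\eta_{i+lu}\meet\eta_k=\eta_{i+(l+1)u}\meet\eta_k$ for $l=0,\dots,m-1$. So it suffices to treat $j=i+u$. By the tripod property the claim can fail only if $\eta_i\meet\eta_{i+u}=\eta_i\meet\eta_k<\eta_{i+u}\meet\eta_k$ (or its mirror image under $i\leftrightarrow i+u$, which reduces to the first statement applied to the time-reversed orbit $\eta^{-1}$ — again a quasi-cycle of pseudo-period $u$, by a symmetry argument together with Proposition~\ref{proposition:qc_time_reversal} and Remark~\ref{remark:trivial_preservation}). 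To rule the first statement out, note $\eta_i\meet\eta_{i+u}=\gamma_i$ and split on $k$: if $k>i$, then $\eta_i\meet\eta_k=f^{i}(\eta_0\meet\eta_{k-i})$ forces $\eta_0\meet\eta_{k-i}=\gamma$, so $u\mid(k-i)$ by (iii), say $k-i=lu$ with $l\ge2$ (since $k\ne i,i+u$), whence by (ii) also $\eta_{i+u}\meet\eta_k=f^{i}(\eta_u\meet\eta_{lu})=f^{i}(\gamma)=\gamma_i$, contradicting $\eta_{i+u}\meet\eta_k>\gamma_i$; if $k<i$, set $a:=i-k\ge1$, conjugate by $f^{k}$ to get $\eta_0\meet\eta_a=\gamma_a$ and $\eta_0\meet\eta_{a+u}>\gamma_a$, apply $f^{u}$ to the first to get $\eta_u\meet\eta_{a+u}=\gamma_a$, and observe $\eta_0\meet\eta_u=\gamma$ and (by (i)) $\gamma_a<\eta_0\meet\eta_{a+u}\le\gamma$, so the three pairwise meets of $\eta_0,\eta_u,\eta_{a+u}$ are $\gamma,\gamma_a,\eta_0\meet\eta_{a+u}$, pairwise distinct, contradicting the tripod property.

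\textbf{Main obstacle.} The substantive difficulties are the bookkeeping that keeps all indices inside $[0,n]$ so that the pseudo-period bound (i) applies (this is exactly where the hypothesis $j\le n$ is used, via $i+u\le n$), and the $k<i$ subcase, where one must introduce the auxiliary point $\eta_u$ to manufacture a three-point configuration violating the tripod property. A minor point to pin down is that $\eta^{-1}$ is a quasi-cycle, needed to invoke the $i\leftrightarrow i+u$ symmetry.
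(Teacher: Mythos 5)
Your proof is correct, and it rests on the same core ingredients as the paper's: the fixed-point identity $f^u(\gamma)=\gamma$ for $\gamma=\eta_0\meet\eta_u$ coming from Proposition~\ref{proposition:baby_cycle}, the characterisation $\eta_0\meet\eta_m=\gamma\iff u\mid m$ (your (ii)+(iii), the paper's ``Claim''), and case-splitting via Fact~\ref{fact:lemmaxyz}. The organisation differs: the paper normalises at once to $j=0$ (after using time-reversal and symmetry to arrange $j<\min(i,k)$), and then disposes of the remaining case $u\mid k$ in one line by applying $p^{\min(i,k)}$ — which fixes $\gamma$ — to $\eta_0\meet\eta_{|k-i|}=\gamma$. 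You instead chain down to consecutive indices $j=i+u$ and then handle $k>i$ and $k<i$ by separate tripod arguments; this is somewhat longer but works. Two small things to tidy up: in the $k<i$ subcase you conjugate by $f^{-k}$ (not $f^{k}$), and the three meets of $\eta_0,\eta_u,\eta_{a+u}$ need not be pairwise distinct (when $\eta_0\meet\eta_{a+u}=\gamma$), though the configuration still violates the tripod property since the two equal meets fail to lie below the third; also the claim that $\eta^{-1}$ is a quasi-cycle, which you flag, does need to be spelled out (it follows by applying a suitable power of $f$ to move the comb/spiral inequality to the other end of the orbit) — the paper avoids this by only using time-reversal to control the pseudo-period via Proposition~\ref{proposition:qc_time_reversal}, not to re-derive quasi-cyclicity.
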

	\begin{proof}
		By Proposition~\ref{proposition:qc_time_reversal}, we may assume that $j<k$. Indeed, otherwise, if $j>k$, then we can simply consider $\eta^{-1}$ --- this preserves $u$, but reverses the order of $\eta_j$ and $\eta_k$ in the orbit. Further, we may also assume without loss of generality that $j<i$ (otherwise, if $i<j<k$, we can just swap $i$ and $j$; the case of $i=j$ is trivial). Moreover, it is enough to consider the case when $j=0$ (truncating $\eta$ if necessary). This leaves us with some $i,k>0$ such that $i\neq k$ and $u$ divides $i$, and we need to show that $\eta_0\meet \eta_k=\eta_i\meet \eta_k$.
		
		By Proposition~\ref{proposition:baby_cycle}, we have a finite partial automorphism $p$ such that $\eta$ is an orbit of $p$ and $p^u(\eta_0\meet \eta_u)=\eta_0\meet \eta_u$. Let $f\supseteq p$ be a total meet-tree automorphism.
		\begin{claim*}
			If $0<m\leq n$, then $\eta_0\meet \eta_m=\eta_0\meet \eta_u$ if and only if $u$ divides $m$.
		\end{claim*}
		\begin{clmproof}
			Suppose first that $u$ divides $m$, so $m=lu$. The proof is by induction with respect to $l$. Since $\eta$ is a quasi-cycle, we have that $\eta_{lu-u}\meet \eta_{lu}=\eta_{lu-2u}\meet \eta_{lu-u}=\ldots=\eta_0\meet \eta_u$, so in particular, $\eta_0\meet \eta_u=\eta_{lu-u}\meet \eta_{lu}$. By the induction hypothesis, $\eta_0\meet\eta_u=\eta_0\meet \eta_{lu-u}$, so $\eta_0\meet\eta_{lu-u}=\eta_{lu-u}\meet \eta_{lu}$. By Fact~\ref{fact:lemmaxyz}, it follows that $\eta_0\meet \eta_{lu}\geq \eta_0\meet \eta_{lu-u}=\eta_0\meet \eta_u$. Since by the definition of $u$, $\eta_0\meet \eta_u\geq \eta_0\meet \eta_{lu}$, this completes the proof.
			
			Now, suppose $\eta_0\meet \eta_m=\eta_0\meet \eta_u$. Let $l$ be maximal such that $lu\leq m$. Then, by the preceding paragraph, $\eta_0\meet \eta_{lu}=\eta_0\meet \eta_{u}$, so by Fact~\ref{fact:lemmaxyz}, $\eta_{lu}\meet \eta_m\geq \eta_0\meet \eta_u$. Now, $\eta_0\meet \eta_u$ is a fixed point of $f^u$, and hence also of $p^{-lu}$. By applying $f^{-lu}$ to the last inequality, we obtain $f^{-lu}(\eta_{lu}\meet \eta_m)=\eta_0\meet \eta_{m-lu}\geq \eta_0\meet \eta_u$. But $m-lu<u$, so by minimality of $u$, we have that $m-lu=0$, so $m=lu$.
		\end{clmproof}
		Now, by Claim, we have $\eta_0\meet \eta_i=\eta_0\meet \eta_u\geq\eta_0\meet \eta_k$. If the inequality is strict, by Fact~\ref{fact:lemmaxyz}, $\eta_0\meet \eta_k=\eta_i\meet \eta_k$ and we are done. Otherwise, by Claim, $u$ divides $k$ and $\eta_0\meet \eta_k=\eta_0\meet \eta_u$, so we need to show that $\eta_i\meet \eta_k=\eta_0\meet \eta_u$.
		
		We have that $u$ divides $k-i$ so by Claim, we have $\eta_0\meet \eta_{\lvert k-i\rvert}=\eta_0\meet \eta_u$. Since $\eta_0\meet \eta_u$ is a fixed point of $p^u$ and $u$ divides $i,k$ (and hence also $\min(i,k)$), $\eta_0\meet \eta_u$ is also a fixed point of $p^{\min(i,k)}$, so we have that
		\[
		\eta_i\meet \eta_k=\eta_{\min(i,k)}\meet \eta_{\min(i,k)+\lvert k-i\rvert}=p^{\min(i,k)}(\eta_0\meet \eta_{\lvert k-i\rvert})=p^{\min(i,k)}(\eta_0\meet \eta_u)=\eta_0\meet \eta_u,
		\]
		so we are done.
	\end{proof}
	
%!!!
	The next two corollaries are related to Question~\ref{question:not_qc_ext}, and they are not necessary to prove the main result of the paper, Theorem~\ref{theorem:main}.
	
	\begin{corollary}
		\label{corollary:qc_extension}
		If $\eta=(\eta_0,\ldots,\eta_n)$ is a quasi-cycle with pseudo-period $u$, and $N>n$ is minimal such that $u$ divides $N$, then every extension $\bar\eta=(\eta_0,\ldots,\eta_{N-1})$ is also a quasi-cycle with pseudo-period $u$.
	\end{corollary}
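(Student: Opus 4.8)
The plan is to argue by induction on the length of the extension, treating the cases $n<2u$ and $n\ge 2u$ separately. Two bookkeeping points first: since $N$ is the least multiple of $u$ strictly above $n$, no multiple of $u$ lies in $(n,N)$, so $u\nmid m$ whenever $n<m\le N-1$; and $N\le n+u$, so $\bar\eta$ adds at most $u-1$ points to $\eta$ and every such $m$ satisfies $0<m-u\le n$, i.e.\ $\eta_{m-u}$ already lies inside $\eta$. (Since $u\le n$ always, the case $u=1$ is trivial: then $N=n+1$ and $\bar\eta=\eta$; so assume $u\ge 2$.)

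If $n<2u$, then $N=2u$ and $\bar\eta=(\eta_0,\dots,\eta_{2u-1})$. This case is precisely what the proof of Proposition~\ref{proposition:baby_cycle} already gives: its internal claim, applied with the meet-tree automorphism one uses to realise $\bar\eta$ as a full orbit, shows that every partial-orbit extension of a pseudo-period-$u$ quasi-cycle of length at most $2u$ to length exactly $2u$ is again a pseudo-period-$u$ quasi-cycle.

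If $n\ge 2u$, I would prove by induction on $m$, for $n\le m\le N-1$, that $\mu_m:=(\eta_0,\dots,\eta_m)$ is a pseudo-period-$u$ quasi-cycle; the base case $m=n$ is the hypothesis, and $\mu_{N-1}=\bar\eta$ is the conclusion. For the inductive step, fix $n\le m<N-1$, let $g$ be an automorphism of a meet-tree realising $\bar\eta$ as an orbit, and set $c:=\eta_0\meet\eta_u$. Since $m\ge 2u$, Remark~\ref{remark:quasi_cycle_meets_cycle} applied to $\mu_m$ gives $\eta_0\meet\eta_u=\eta_u\meet\eta_{2u}$, whence $g^u(c)=c$; moreover, $\mu_m$ is itself a pseudo-period-$u$ quasi-cycle, so Proposition~\ref{proposition:needed_qc} may be applied to it freely. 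One then re-runs the inductive step of the proof of Proposition~\ref{proposition:baby_cycle}: that $\eta_{m+1}$ is incomparable to $\eta_0$, that $\eta_0\meet\eta_{m+1}<c$ (which keeps the pseudo-period equal to $u$ and rules out cycles and spirals), and, when $m+1=2k$, the identity $\eta_0\meet\eta_k=\eta_k\meet\eta_{m+1}$ (which rules out combs). That argument used the hypothesis $m+1<2u$; the replacements needed are (i) the inequality $\eta_0\meet\eta_u>\eta_0\meet\eta_{m+1-u}$, which now follows from Proposition~\ref{proposition:needed_qc} applied to $\mu_m$: writing $d:=(m+1)\bmod u\in\{1,\dots,u-1\}$ one has $\eta_0\meet\eta_{m+1-u}=\eta_0\meet\eta_d<\eta_0\meet\eta_u$; and (ii) the ``distant'' meets of $\eta_{m+1}$, which are rewritten via $g^u(c)=c$, for instance $\eta_{m+1}\meet\eta_{m+1-u}=g^{m+1-u}(c)=g^d(c)=\eta_d\meet\eta_{d+u}$ and $\eta_{m+1}\meet\eta_u=g^u(\eta_0\meet\eta_d)=\eta_u\meet\eta_{u+d}$, after which Proposition~\ref{proposition:needed_qc} together with the ultrametric-type inequalities of Fact~\ref{fact:lemmaxyz} compare them and finish as before. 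The comb identity is handled similarly: Proposition~\ref{proposition:needed_qc} reduces $\eta_0\meet\eta_k$ to $\eta_0\meet\eta_{k\bmod u}$, which $g^u$ fixes, and Remark~\ref{remark:quasi_cycle_meets_cycle} applied to $\mu_m$ then yields the identity. The ``$\eta_{m+1}\not\le\eta_0$'' half of incomparability is the time-reversal mirror of the above, via Proposition~\ref{proposition:qc_time_reversal}.

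I expect step (ii) to be where the real work lies: controlling the meets $\eta_{m+1}\meet\eta_j$ for $j$ far from $m+1$ requires combining the $g^u$-periodicity of $c$ with Proposition~\ref{proposition:needed_qc} and Fact~\ref{fact:lemmaxyz} with some care (the case split in Fact~\ref{fact:lemmaxyz} of whether $\eta_0\meet\eta_{m+1-u}\meet\eta_{m+1}$ equals $\eta_0\meet\eta_{m+1-u}$ or $\eta_0\meet\eta_{m+1}$ is the delicate point). Everything else is bookkeeping or a direct appeal to the proofs of Propositions~\ref{proposition:baby_cycle} and~\ref{proposition:needed_qc}.
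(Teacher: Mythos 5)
Your proposal is correct and takes essentially the same route as the paper: split on $n<2u$ versus $n\ge 2u$, invoke the claim inside the proof of Proposition~\ref{proposition:baby_cycle} for the first case, and for the second push $\eta$ one step at a time using Proposition~\ref{proposition:needed_qc}, Proposition~\ref{proposition:qc_time_reversal}, Fact~\ref{fact:lemmaxyz}, and the $f^u$-fixed point $c=\eta_0\meet\eta_u$. The only organizational difference is that the paper first verifies the pseudo-period stays $u$ for every new index $m\in(n,N)$ directly (via the observation that $(\eta_u,\ldots,\eta_m)$ is the $f^u$-image of a prefix of $\eta$), and then runs the quasi-cycle induction, whereas you fold both into a single induction; the underlying meet computations are the same.
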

	\begin{proof}
		The case of $N=2u$ is the claim in the proof of Proposition~\ref{proposition:baby_cycle}, so we may assume that $n\geq 2u$.
		
		First, we will show that the pseudo-period is indeed $u$. Take any $m\in(n,N)$. Then $u$ does not divide $m$ and $2u\leq n<m< n+u$, so $(\eta_u,\ldots,\eta_m)$ is a quasi-cycle with pseudo-period $u$; take $m'\equiv m \pmod u$ such that $u<m'<2u$, so that Proposition~\ref{proposition:needed_qc} implies that $\eta_u\meet \eta_m=\eta_u\meet \eta_{m'}$, so $\eta_u\meet \eta_m=\eta_u\meet \eta_{m'}<\eta_u\meet \eta_{2u}$. Then $\eta_u\meet \eta_{2u}=\eta_0\meet\eta_u$, so $\eta_u\meet\eta_m<\eta_0\meet\eta_u$, and so, by Fact~\ref{fact:lemmaxyz}, $\eta_0\meet \eta_m=\eta_u\meet\eta_m<\eta_0\meet\eta_u$.
		
		Now, we will show that $(\eta_0,\ldots,\eta_m)$ is a quasi-cycle by induction with respect to $m\geq n$. The case of $m= n$ is clear. Suppose now that $n<m<N-1$ and $(\eta_0,\ldots,\eta_m)$ is a quasi-cycle (with pseudo-period $u$). We need to show that so is $(\eta_0,\ldots,\eta_{m+1})$.
		
		It easily follows from the assumption that $u$ does not divide $m+1$, so we have some positive $k\in(u,2u)$ such that $k\equiv m+1\pmod u$. By Proposition~\ref{proposition:needed_qc}, it follows that $\eta_u\meet \eta_{m+1}=\eta_u\meet \eta_k<\eta_u\meet \eta_{2u}=\eta_0\meet \eta_u<\eta_0$. It follows that $\eta_{m+1}\not\geq \eta_0$. The not-inequality $\eta_{m+1}\not\leq\eta_0$ can be proven similarly, or follows immediately from the above argument by considering the partial orbit $(\eta_{m+1},\eta_m,\ldots,\eta_0)$ (and applying Proposition~\ref{proposition:qc_time_reversal}). In conclusion, $(\eta_0,\ldots,\eta_{m+1})$ is not a cycle, and not a spiral. We need to prove that it is not a comb.
		
		Suppose now that $m$ is odd, so $m+1=2c$. We need to show that $\eta_0\meet \eta_c=\eta_c\meet \eta_{2c}$. Let $k<u$ be such that $k\equiv c\pmod u$ (since $u$ does not divide $2c$, we have $k>0$, and $u$ divides none of $k,2k,c,2c$). Then clearly $c,k\neq 0$, $c\neq 2c$ and $k\neq 2k$, so (since $\eta_1,\ldots,\eta_{m+1})$ is a quasi-cycle with pseudo-period $u$), by Proposition~\ref{proposition:needed_qc}, $\eta_c\meet \eta_{2c}=\eta_k\meet \eta_{2c}=\eta_k\meet \eta_{2k}$. Again by Proposition~\ref{proposition:needed_qc}, $\eta_0\meet \eta_c=\eta_0\meet \eta_k$. Since $k<u$, $2k<2u\leq n$, we have $\eta_0\meet \eta_k=\eta_k\meet \eta_{2k}$, whence $\eta_0\meet \eta_c=\eta_c\meet \eta_{2c}$.
	\end{proof}
	
	\begin{corollary}
		\label{corollary:cycle}
		If $\eta=(\eta_0,\ldots,\eta_n)$ is a quasi-cycle, then it can be extended to a cycle.
	\end{corollary}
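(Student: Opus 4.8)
The plan is to fatten $\eta$ to a quasi-cycle whose length is divisible by its pseudo-period, and then close that fattened orbit up into a cycle by a single one-point extension, checking legality of the extension with Fact~\ref{fact:typesintrees}.

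In detail, let $u$ be the pseudo-period of $\eta$ (so $1\le u\le n$), let $N>n$ be the least multiple of $u$, and note $N\ge 2u$. By Corollary~\ref{corollary:qc_extension} every extension $\bar\eta=(\eta_0,\dots,\eta_{N-1})$ of $\eta$ is again a quasi-cycle of pseudo-period $u$; I fix one such $\bar\eta$, read off from a total automorphism. By Proposition~\ref{proposition:baby_cycle} there is a finite partial automorphism $p$ having $\eta$ as an orbit and satisfying $p^u(\eta_0\meet\eta_u)=\eta_0\meet\eta_u$; extend $p$ to a total meet-tree automorphism $f$ (possible after enlarging the tree), set $\eta_i:=f^i(\eta_0)$ for all $i\in\bZ$ and $b:=\eta_0\meet\eta_u$. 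Then $f^u(b)=b$, and since $u\mid N$ also $f^{N}(b)=f^{N-u}(b)=b$. By Remark~\ref{remark:suff_p_aut} (pushed forward through $f$), to show that $(\eta_0,\dots,\eta_{N-1},\eta_0)$ is a partial orbit --- a cycle extending $\eta$ --- it is enough to prove $\qftp(\eta_0/A)=\qftp(\eta_N/A)$, where $\eta_N=f^N(\eta_0)$ and $A$ is the finite meet-substructure generated by $\{\eta_1,\dots,\eta_{N-1}\}$.

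I would compute both types with Fact~\ref{fact:typesintrees}. For $\eta_0$: since $\bar\eta$ has pseudo-period $u$ we have $\max_{1\le k\le N-1}\eta_0\meet\eta_k=\eta_0\meet\eta_u=b$, and the first bullet of Fact~\ref{fact:lemmaxyz} upgrades this to $\max\{x\meet\eta_0:x\in A\}=b$; moreover $\eta_u\in A$ lies above $b$, and $b<\eta_0$ because the suborbit $(\eta_0,\dots,\eta_u)$ of the quasi-cycle $\bar\eta$ is a quasi-cycle (so $\eta_0$ and $\eta_u$ are incomparable). For $\eta_N$: the orbit $(\eta_1,\dots,\eta_N)=f[\bar\eta]$ is again a quasi-cycle of pseudo-period $u$, so Proposition~\ref{proposition:qc_time_reversal} identifies $\max\{x\meet\eta_N:x\in A\}$ with $\eta_{N-u}\meet\eta_N$; and $\eta_{N-u}\meet\eta_N=f^{N-u}(\eta_0\meet\eta_u)=f^{N-u}(b)=b$, while $b<\eta_N$ (as $\eta_{N-u}$ and $\eta_N$ are incomparable, again since $(\eta_{N-u},\dots,\eta_N)$ is a quasi-cycle), and $\eta_u\in A$ with $\eta_u\ge b$. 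Thus, choosing the distinguished element $a:=\eta_u\in A$ in both applications of Fact~\ref{fact:typesintrees}, each of $\qftp(\eta_0/A)$ and $\qftp(\eta_N/A)$ is determined by the same data: that $b$ is strictly below the realising element, the element $\eta_u$, and the order type of $b$ over $A_{\le\eta_u}$ (which mentions neither $\eta_0$ nor $\eta_N$). Hence the two types agree and we are done; the degenerate case $n<2u$, where $N=2u$, is in any case already handled by the $2u$-cycle produced in Proposition~\ref{proposition:baby_cycle}.

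The main obstacle is the equality $\max\{x\meet\eta_N:x\in A\}=b$: one must first locate this ``attachment point'' of $\eta_N$ to $A$ as $\eta_{N-u}\meet\eta_N$, which is governed by the pseudo-period of $(\eta_1,\dots,\eta_N)$ \emph{read from the top} and is precisely what Proposition~\ref{proposition:qc_time_reversal} supplies, and only then recognise it as $b$ using that $\eta_0\meet\eta_u$ is a fixed point of $f^u$ (Proposition~\ref{proposition:baby_cycle}) together with $u\mid N$. A lesser point is the routine bookkeeping caused by Fact~\ref{fact:typesintrees} needing $A$ to be a genuine substructure rather than the bare set $\{\eta_1,\dots,\eta_{N-1}\}$, which changes nothing since equality of quantifier-free types over a set and over the substructure it generates coincide.
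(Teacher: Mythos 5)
Your proof is correct and shares the paper's skeleton --- extend $\eta$ via Corollary~\ref{corollary:qc_extension} to length $N$ divisible by the pseudo-period $u$, reduce by Remark~\ref{remark:suff_p_aut} to the equality $\qftp(\eta_0/A)=\qftp(\eta_N/A)$, and settle that equality via Fact~\ref{fact:typesintrees} --- but the step that actually verifies the hypothesis of Fact~\ref{fact:typesintrees} is carried by different lemmas. The paper invokes Proposition~\ref{proposition:needed_qc} to show directly that $\eta_0\meet\eta_i=\eta_u\meet\eta_i=\eta_N\meet\eta_i$ for every $0<i<N$, from which agreement of types is immediate. You instead locate the attachment point $b'$ of $\eta_N$ as $\eta_{N-u}\meet\eta_N$ via the time-reversal invariance of the pseudo-period (Proposition~\ref{proposition:qc_time_reversal}), then recognise it as $b=\eta_0\meet\eta_u$ using the fixed-point property of $b$ under $f^u$ supplied by the \emph{as a consequence} clause of Proposition~\ref{proposition:baby_cycle} together with $u\mid N$. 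This bypasses Proposition~\ref{proposition:needed_qc} entirely: you only need to match the single attachment point rather than all meets, which is a genuine (if modest) simplification and arguably closer in spirit to the statement of Fact~\ref{fact:typesintrees}. What the paper's version buys is that Proposition~\ref{proposition:needed_qc} is needed elsewhere anyway (in the quasi-cyclic case of Theorem~\ref{theorem:main}), so re-using it costs nothing there; your route would read slightly cleaner as a stand-alone proof of this corollary. One small presentational point: you first speak of fixing an extension $\bar\eta$ \enquote{read off from a total automorphism} and then separately build $f$ from Proposition~\ref{proposition:baby_cycle}'s $p$ and reset $\eta_i:=f^i(\eta_0)$; these could in principle disagree, so you should simply let $f$ define the extension from the start (Corollary~\ref{corollary:qc_extension} then applies to any such choice), which is evidently what you intend.
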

	\begin{proof}
		Let $u$ be the pseudo-period of $\eta$. The case of $n<2u$ is Proposition~\ref{proposition:baby_cycle}, so we may assume that $n\geq 2u$. Let $N$ be as in Corollary~\ref{corollary:qc_extension}, and let $(\eta_0,\ldots,\eta_N)$ be an arbitrary extension of $\eta$. We will show that $(\eta_0,\ldots,\eta_{N-1},\eta_0)$ is a partial orbit. Write $B$ for the subtree generated by $\{\eta_1,\ldots,\eta_{N-1}\}$. By Remark~\ref{remark:suff_p_aut}, it is enough to show that $\qftp(\eta_0/B)=\qftp(\eta_N/B)$. By Corollary~\ref{corollary:qc_extension}, we may assume without loss of generality that $N=n+1$. It follows easily from Proposition~\ref{proposition:needed_qc} that for every $i\in(0,N)$, $i\neq u$ we have $\eta_0\neq \eta_0\meet \eta_i=\eta_u\meet \eta_i=\eta_N\meet \eta_i\neq \eta_N$. Likewise, $\eta_0\neq \eta_0\meet \eta_u=\eta_u\meet \eta_{2u}=\eta_u\meet \eta_N\neq \eta_N$. Using this, the conclusion follows easily from Fact~\ref{fact:typesintrees}.
	\end{proof}
	
	The preceding corollary suggests the following question.
	\begin{question}
		\label{question:not_qc_ext}
		If $p$ is a finite partial automorphism of a tree, and $\eta$ is a quasi-cyclic orbit of $p$, does $p$ admit an extension $\bar p$ (to a finite partial automorphism, possibly of a larger tree) such that the orbit $\bar\eta$ of $\bar p$ containing $\eta$ is no longer quasi-cyclic?
	\end{question}
	
	Corollary~\ref{corollary:cycle} gives a positive answer under the additional assumption that $p$ has no other orbits besides $\eta$. In general, this seems plausible, but we do not know the answer.

	\begin{remark}
		\label{remark:can_do_anything}
		We believe that Corollary~\ref{corollary:cycle} can be extended to say the following (with a similar proof). If $\eta=(\eta_0,\ldots,\eta_n)$ is a quasi-cycle with pseudo-period $u$, then $\eta$ can be extended to exactly the following kinds of finite partial orbits:
		\begin{itemize}[nosep]
			\item
			a $k$-cycle,
			\item
			a quasi-cycle with pseudo-period $u$ (of arbitrary length),
			\item
			a quasi-cycle with pseudo-period $k$ (of arbitrary length),
			\item
			both an ascending and a descending spiral of spiral length $k$ (of arbitrary length),
			\item
			both an ascending and a descending spiral comb of spiral length $k$ (of arbitrary length),
			\item
			if $n<2u$, both an ascending and a descending spiral comb of spiral length $u$ (of arbitrary length).
		\end{itemize}
		(Here $k>n$ is a multiple of $u$.)\xqed{\lozenge}
	\end{remark}
	
	\section{Finding determined automorphisms}
	\label{section:Finding determined automorphisms}
	Recall that $\bT$ is the universal countable meet-tree.
	
	\begin{definition}
		\label{definition:initial_point}
		Given a finite partial automorphism $p$, we call a point $a\in\dom(p)$ an \emph{initial point of $p$} if $a\notin \range(p)$ or $a$ is in a cyclic orbit of $p$.\xqed{\lozenge}
	\end{definition}
	
	\begin{definition}
		Given sets $P,Q$, a partial function $p\colon P\to Q$, an element $a\in P$ and an integer $k$, we write $p^k(a)\isdefined$ if $a\in \dom(p^k)$, i.e.\ if $p^k(a)$ is well-defined. Otherwise, we write $p^k(a)\isundefined$ (if $p^k(a)$ is undefined).
		
		When $p$ is fixed in the context, and we have an orbit $\eta=(\eta_0,\ldots,\eta_n)$ of $p$, then we write also write $\eta_m\isdefined$ for $p^m(\eta_0)\isdefined$ and $\eta_m\isundefined$ for $p^m(\eta_0)\isundefined$.\xqed{\lozenge}
	\end{definition}
	
	We want to find a cofinal class of determined finite partial automorphisms. In order to do that, we introduce the following notion of a pseudo existentially closed partial automorphism of $\bT$. The idea is that ``any behavior'' of orbits of $p$ in extensions is already ``witnessed'' in $p$, and if possible, we want this to be witnessed at least twice (for technical reasons). This idea is made a bit more precise in Proposition~\ref{proposition:consequences}.
	\begin{definition}
		\label{definition:new_strong}
		Let $p$ be a finite partial automorphism of $\bT$. We say that $p$ is \emph{pseudo existentially closed} (\emph{PEC}) (in $\bT$) if it satisfies the following condition.
		
		For every extension $q\supseteq p$ to a partial automorphism of $\bT$, every triple $\eta_0,\mu_0,\zeta_0$ of initial points of $p$ (cf.\ Definition~\ref{definition:initial_point}) in orbits $\eta,\mu,\zeta$ (respectively) of $p$, and every $m_1,m_2>0$ such that $q^{m_1}(\mu_0)\isdefined,q^{m_2}(\zeta_0)\isdefined$, there exist (strictly) positive integers $m_1',m_2',m_1'',m_2''$ such that:
		\begin{itemize}
			\item
			if $\mu_{m_1}\isundefined$\footnote{note that here, $\mu_{m_1}=p^{m_1}(\mu_0)$, and analogously in the following bullets}, then $m_1'\neq m_1''$,
			\item
			the triples $(\eta_0,q^{m_1}(\mu_0),q^{m_2}(\zeta_0))$, $(\eta_0,\mu_{m_1'},\zeta_{m_2'})$ and $(\eta_0,\mu_{m_1''},\zeta_{m_2''})$ are well-defined, and all have the same quantifier-free type (in $\bT$; in particular, they have the same order type),
			\item
			if there is some $k$ such that $\mu_0\meet q^k(\mu_0)\neq q^k(\mu_0)\meet q^{2k}(\mu_0)$ (in particular, $q^{2k}(\mu_0)\isdefined$), then for minimal such $k$, we have $m_1\equiv m_1'\equiv m_1''\pmod k$. (Notice that in Proposition~\ref{proposition:consequences}\eqref{proposition:consequences:not_qc} we show that in this case, $\mu$ must be a spiral and $k$ is its spiral length.)\xqed{\lozenge}
		\end{itemize}
	\end{definition}

	Now we show that all finite partial automorphisms admit ``PEC closure'' (note that it is not canonical, in particular it is not a closure operator, hence the scare quotes).
	\begin{proposition}
		\label{proposition:PEC_extensions}
		Every finite partial automorphism of $\bT$ can be extended to a finite pseudo existentially closed (PEC) partial automorphism.
	\end{proposition}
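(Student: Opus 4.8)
The plan is to build $\hat p$ by a finite chain of extensions $p=p_0\subseteq p_0'\subseteq p_1\subseteq\cdots\subseteq p_N=\hat p$. First I would \emph{regularize}: using Corollary~\ref{corollary:cycle}, applied in turn to each quasi-cyclic orbit of $p$ (possible since $\bT$ is the generic meet-tree, Fact~\ref{fact:generic_trees}), extend $p$ to a $p_0'$ in which every orbit is a cycle, a spiral or a comb, so that no orbit is quasi-cyclic. From then on I only extend orbits \emph{forward} --- appending images $p^{n+1}(\mu_0),p^{n+2}(\mu_0),\dots$ --- never adjoining a preimage of an initial point and never creating a new orbit. After regularization this convention guarantees that the set of initial points (Definition~\ref{definition:initial_point}), and of their ambient orbits, is the same for every member of the chain $p_0'\subseteq p_1\subseteq\cdots$; moreover, by the persistence statements in Remark~\ref{remark:trivial_preservation}, each non-cyclic orbit keeps its type ($k$-spirals stay $k$-spirals, $k$-combs stay $k$-combs), so the ``spiral lengths'' $k$ appearing in the congruence clause of Definition~\ref{definition:new_strong} are genuine, bounded invariants of $p_0'$.

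Next I would show the PEC requirements over a regularized $p_i$ form a \emph{finite} set. Given $q\supseteq p_i$, initial points $\eta_0,\mu_0,\zeta_0$ and $m_1,m_2>0$ with $q^{m_1}(\mu_0),q^{m_2}(\zeta_0)$ defined, the only relevant data is: the quantifier-free type $\tau$ in $\bT$ of the triple $(\eta_0,q^{m_1}(\mu_0),q^{m_2}(\zeta_0))$ --- of which there are finitely many, since by quantifier elimination in $\bT$ and Fact~\ref{fact:lemmaxyz} such a triple together with its meets spans a meet-tree of at most five elements --- together with whether $\mu_{m_1}$ is defined in $p_i$ and the residues of $m_1,m_2$ modulo the relevant spiral lengths. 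Call such a datum $D$ \emph{realizable over $p_i$} if it arises from some $q\supseteq p_i$, and \emph{witnessed in $p_i$} if there already exist exponents $m_1',m_1''$ (and $m_2',m_2''$) among the orbits of $p_i$ meeting the conclusion of Definition~\ref{definition:new_strong}: strictly positive, with $m_1'\neq m_1''$ when $\mu_{m_1}$ is undefined, with $(\eta_0,\mu_{m_1'},\zeta_{m_2'})$ of type $\tau$, and congruent to $m_1,m_2$ modulo the spiral lengths. Unwinding the definition, $p_i$ is PEC precisely when every datum realizable over $p_i$ is witnessed in $p_i$; and by the orbit analysis of Section~\ref{section:Orbits in meet-trees} (Propositions~\ref{proposition:spirals}, \ref{proposition:needed_comb} and~\ref{proposition:needed_qc}), for regularized $p_i$ there are only finitely many data realizable over $p_i$.

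The iteration then runs as follows. If $p_i$ is not PEC, choose a datum $D$ realizable over $p_i$ but not witnessed in $p_i$, and extend $p_i$ to $p_{i+1}$ by lengthening the orbits of the relevant $\mu_0$ and $\zeta_0$ so that $D$ becomes witnessed (this is the crux). Since $p_{i+1}\supseteq p_i$, every extension of $p_{i+1}$ is an extension of $p_i$, so every datum realizable over $p_{i+1}$ is realizable over $p_i$; every datum witnessed in $p_i$ stays witnessed in $p_{i+1}$ (witnesses are exponents of orbits, which only grow); and $D$ is now witnessed. Hence the number of data realizable-but-not-witnessed over $p_i$ --- finite, by the previous paragraph --- strictly decreases with $i$, so after finitely many steps the chain reaches a PEC partial automorphism $\hat p=p_N$, as required.

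The hard part will be the single extension step: given that $D$ is realizable over $p_i$ by \emph{some}, possibly wild, extension $q$, I must produce it by a controlled forward-lengthening of the orbits of $\mu_0$ and $\zeta_0$ \emph{inside $\bT$}, and attain it at two distinct exponents obeying the congruence clause of Definition~\ref{definition:new_strong}. This is exactly what the classification of finite partial orbits in Section~\ref{section:Orbits in meet-trees} is designed to supply: once the orbit of $\mu_0$ in $p_i$ is a cycle, spiral or comb, Propositions~\ref{proposition:spirals} and~\ref{proposition:needed_comb} pin down how its further iterates sit (so the realizable triple-types over $\mu_0$ form a finite list, each attained by a bounded forward extension and then repeatable by going one further period, which simultaneously supplies the second, distinct witness and the congruence). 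One then uses ultrahomogeneity of $\bT$ (Fact~\ref{fact:generic_trees}), with Remark~\ref{remark:suff_p_aut} and Fact~\ref{fact:typesintrees}, to check that the lengthened orbits still form a partial automorphism of $\bT$ with the prescribed quantifier-free types, and that this can be arranged without disturbing the configurations set up at earlier stages. A secondary point to verify is that the regularization step can be carried out inside $\bT$ for all quasi-cyclic orbits at once --- routine, given that $\bT$ is generic.
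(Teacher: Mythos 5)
Your overall bookkeeping strategy --- iterate finitely many forward-lengthening steps, argue a strictly decreasing count of realizable-but-unwitnessed data --- is essentially the same as the paper's three-step procedure, and that part is fine. The serious gap is your regularization step. You propose to use Corollary~\ref{corollary:cycle} ``applied in turn to each quasi-cyclic orbit of $p$'' to reach a $p_0'$ with no quasi-cyclic orbits. But Corollary~\ref{corollary:cycle} is a statement about a \emph{single} partial orbit in isolation; it does not say that a quasi-cyclic orbit of a partial automorphism $p$ with several orbits can be closed into a cycle inside an extension $\bar p\supseteq p$ that is compatible with the other orbits. This is exactly the open Question~\ref{question:not_qc_ext} of the paper, and Remark~\ref{remark:no_qcycles} explicitly observes that a positive answer to that question would allow precisely what you are asserting (that a PEC partial automorphism can have no quasi-cyclic orbits) and would simplify the proof of Theorem~\ref{theorem:main} --- i.e.\ the paper flags this as unresolved. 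Appealing to genericity of $\bT$ does not help: extending $p$ inside $\bT$ is equivalent (by ultrahomogeneity) to extending it inside \emph{some} finite meet-tree, so the obstruction is exactly whether the abstract extension of $p$ exists, which Corollary~\ref{corollary:cycle} does not settle in the presence of other orbits.

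What the paper actually does is a weaker step: for each orbit $\eta$, it checks whether \emph{some} extension $f\supseteq p$ makes the orbit containing $\eta$ non-quasi-cyclic, and if so it extends $\eta$ so that this is already witnessed in $p$ itself. After this step, any orbit that is still a quasi-cycle is one that \emph{remains} a quasi-cycle in every further extension; for such orbits there is no spiral length and the congruence clause of Definition~\ref{definition:new_strong} is vacuous, so the finiteness of the remaining conditions still holds. Since forward extensions can then never create a new cycle (the step has already ruled that out), the set of initial points is fixed along the chain, exactly as you need. If you replace your regularization with this weaker step and carry the quasi-cyclic case along through the rest of your argument, the proposal becomes correct and matches the paper's proof.
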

	\begin{proof}
		Take any partial automorphism $p$. We extend it in three steps:
		\begin{enumerate}
			\item
			Given an orbit $\eta$ of $p$, if in some extension $f\supseteq p$, $\eta$ is extended to an orbit which is not a quasi-cycle, then we extend $\eta$ so that it is witnessed already in the corresponding extension of $p$; we repeat that for each orbit of $p$.
			\item 
			For each triple $\eta_0,\mu_0,\zeta_0$ of initial points of $p$, each residue mod the spiral length $k$ of $\mu$ (if it exists, i.e.\ $\mu$ is a spiral or a comb) and each quantifier-free type $r(x,y,z)$, we check whether for some $f\supseteq p$ there are $m_1,m_2$ (with $m_1$ having the appropriate residue mod $k$, if $\mu$ is a spiral or a comb) such that $\models r(\eta_0,f^{m_1}(\mu_0),f^{m_2}(\zeta_0))$. If the answer is yes, we extend $\mu$ and $\zeta$ to already contain witnesses $\mu_{m_1'},\zeta_{m_2'}$ for that.
			\item 
			We repeat the previous step, only this time, checking whether it is possible to have two witnesses $(\mu_{m_1'},\zeta_{m_2'})$ and $(\mu_{m_1''},\zeta_{m_2''})$ with $m_1'\neq m_1''$.
		\end{enumerate}
		This procedure terminates: we never add any new orbits in any of the steps, so step 1 finishes. After step 1, the set of initial points is fixed (because there are no new orbits, and step 1 ensures that there can be no new cycles), as are the spiral lengths of all the relevant orbits, so in each of steps 2 and 3 there is only a finite number of conditions to check.
		
		It is fairly easy to check that after the three steps, we obtain a PEC partial automorphism. Step 1 guarantees that the minimal $k$ checked in the last bullet of Definition~\ref{definition:new_strong} is simply the spiral length of $\mu$. Step 2 provides the witness $(m_1',m_2')$ required in the second bullet. Finally, at this point, the hypothesis of the first bullet of Definition~\ref{definition:new_strong} implies that $m_1\neq m_1'$, so by the third step, we also have $(m_1'',m_2'')$ with $m_1'\neq m_1''$ (otherwise, we can just take $m_1''=m_1'$ and $m_2''=m_2'$).
%		First, since there are only finitely many initial points, we can extend the orbits of $p$ to guarantee that if any given orbit can be extended to a partial orbit which is not a quasi-cycle, then it is not a quasi-cycle. This ensures that the $k$ checked in the last bullet of Definition~\ref{definition:new_strong} is simply the spiral length of the corresponding orbit of $p$ --- in particular, it does not depend on $f$.
%		
%		Then given any $\eta_0,\mu_0,\zeta_0$ and a residue modulo the spiral length of $\mu$ (if it exists) there are only finitely many possible order types of $(\eta,f^{m_1}(\mu_0),f^{m_2}(\zeta_0))$ (where we let $f,m_1,m_2$ vary). Once a given order type is satisfied by a triple $(\eta_0,\mu_{m_1'},\zeta_{m_2'})$, it will also be satisfied by the same triple in any extension. Thus, we may simply extend the orbits of $p$ to guarantee that $p$ witnesses all possible order types of $(\eta,f^{m_1}(\mu_0),f^{m_2}(\zeta_0))$ for $f\supseteq p$. Since in this process, we never add any new initial points (and there are finitely many configurations to check for each orbit), it will eventually terminate.
%		
%		Analogously, we may assume that if a given order type can realised more than once with different $m_1$, $p$ already witnesses that, which will give us the first bullet.
	\end{proof}
	
	\begin{proposition}
		\label{proposition:consequences}
		If $p$ is PEC and $\eta,\mu$ are orbits of $p$, while $f\supseteq p$ is contained in some $\bar f\in \Aut(\bT)$ (in particular, if $f$ is a finite partial automorphism of $\bT$), then:
		\begin{enumerate}[label=(\arabic{*})]
			\item
			\label{proposition:consequences:not_qc}
			if the $f$-orbit containing $\eta$ is neither a quasi-cycle nor a cycle, then there is some $k$ such that $\eta_{2k}\isdefined$ and $\eta_0\meet \eta_k\neq \eta_k\meet \eta_{2k}$ (note that even if $\eta$ is not a comb, by Proposition~\ref{proposition:spirals}(3), it follows that the minimal such $k$ is the spiral length of $\eta$),
%!!!
			\item
			\label{proposition:consequences:orbit_type}
			if the $f$-orbit containing $\eta$ is a $k$-cycle, $k$-spiral (ascending or descending), $k$-comb (ascending or descending) or a quasi-cycle (respectively), then so is $\eta$ (respectively),
%			\item
%			\label{proposition:consequences:spiral}
%			if $\eta$ is a $k$-spiral, then $\eta$ has at least $2k+1$ elements (i.e.\ $\eta_{2k}\isdefined$),
			\item
			\label{proposition:consequences:distinct_orbits}
			if $\eta,\mu$ are distinct, then so are the $f$-orbits extending them.
		\end{enumerate}
	\end{proposition}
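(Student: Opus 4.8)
Throughout, fix a total automorphism $\bar f\in\Aut(\bT)$ with $\bar f\supseteq f$ (it exists by hypothesis), let $\bar\eta$ be the orbit of $\bar f$ containing $\eta$, write $\bar\eta_j:=\bar f^j(\eta_0)$, and similarly define $\bar\mu$. By Remark~\ref{remark:trivial_preservation}, $\bar\eta$ has the same type as the $f$-orbit containing $\eta$ whenever that type is a cycle, a spiral or a comb; and if the $f$-orbit containing $\eta$ is a quasi-cycle, then $\eta$, being a suborbit of it, is already a quasi-cycle (which disposes of that subcase of \ref{proposition:consequences:orbit_type}). So from now on I may work with $\bar\eta$ in place of the $f$-orbit. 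The uniform device for all three items is to apply Definition~\ref{definition:new_strong} with $q:=\bar f$: the second bullet ("the witness triple has the same quantifier-free type"), the third bullet ("the witness indices are $\equiv m_1$ modulo the spiral length") and the first bullet ("two distinct witnesses when $p^{m_1}(\eta_0)\isundefined$") will together force the finite $p$-orbit $\eta$ to exhibit the global behaviour of $\bar\eta$, the first and third bullets being precisely what keeps $\eta$ long enough.

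I would prove \ref{proposition:consequences:orbit_type} first, handling the four possible types of $\bar\eta$ in turn. If $\bar\eta$ is a $k$-cycle, apply Definition~\ref{definition:new_strong} with the triple $(\eta_0,\eta_0,\eta_0)$ (i.e. $\mu=\zeta=\eta$) and $m_1=m_2=k$; since $\bar f^k(\eta_0)=\eta_0$, the second bullet yields $m_1'>0$ with $p^{m_1'}(\eta_0)=\eta_0$, so $\eta$ is cyclic, and as $\eta\subseteq\bar\eta$ has at most $k$ elements while $\bar\eta$ is a $k$-cycle, $\eta$ has period exactly $k$. If $\bar\eta$ is an ascending $k$-spiral, then (since $\bar\eta$ is infinite and $\bar\eta_0<\bar\eta_k<\bar\eta_{2k}$) $k$ is also the least $l$ with $\bar\eta_0\meet\bar\eta_l\neq\bar\eta_l\meet\bar\eta_{2l}$, using Proposition~\ref{proposition:spirals}(3); applying the PEC condition with the same triple and $m_1=m_2=k$ gives, by the second bullet, an index $m_1'$ with $(\eta_0,\eta_{m_1'})$ of the same type as $(\bar\eta_0,\bar\eta_k)$, hence $\eta_0<\eta_{m_1'}$, and by the third bullet $k\mid m_1'$; since $\eta_{m_1'}\isdefined$ this forces $\eta_k\isdefined$ with $\eta_k=\bar\eta_k>\eta_0$, and minimality of $k$ in $\bar\eta$ shows $k$ is the spiral length of $\eta$ too (and $\eta$ is not a cycle, by Remark~\ref{remark:trivial_preservation}). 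The descending spiral case is symmetric. If $\bar\eta$ is an ascending $k$-comb, then $\eta$ is neither a cycle nor a spiral by Remark~\ref{remark:trivial_preservation}; applying the PEC condition with the triple $(\eta_0,\eta_0,\eta_0)$ and $m_1=m_2=2k$, I first claim $\eta_{2k}\isdefined$: otherwise the first bullet gives $m_1'\neq m_1''$, both $\equiv 0\pmod k$ by the third bullet, hence both positive multiples of $k$, so one of them is $\geq 2k$ and lies in the domain of $\eta$ — a contradiction; once $\eta_{2k}\isdefined$, the equalities $\eta_j=\bar\eta_j$ for $j\leq 2k$ give $\eta_{2k}\meet\eta_k>\eta_k\meet\eta_0$ with $k$ minimal, so $\eta$ is an ascending $k$-comb. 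Descending combs are symmetric.

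Item \ref{proposition:consequences:not_qc} then follows quickly: if $\bar\eta$ is neither a cycle nor a quasi-cycle it is a $k$-spiral or $k$-comb, hence so is $\eta$ by \ref{proposition:consequences:orbit_type}; in the comb case $\eta_{2k}\isdefined$ was shown above and $\eta_0\meet\eta_k\neq\eta_k\meet\eta_{2k}$ by definition, while in the spiral case the same two-witness argument as above (with $m_1=m_2=2k$) shows $\eta_{2k}\isdefined$, and then $\eta_0<\eta_k<\eta_{2k}$ gives the inequality; the parenthetical remark that the minimal such $k$ is the spiral length of $\eta$ is exactly Proposition~\ref{proposition:spirals}(3). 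For \ref{proposition:consequences:distinct_orbits}, suppose $\eta\neq\mu$ but $\bar\eta=\bar\mu$. If this common orbit is a cycle, then by \ref{proposition:consequences:orbit_type} both $\eta$ and $\mu$ are $k$-cycles with underlying set all of $\bar\eta$, hence the same orbit — a contradiction. Otherwise it is not a cycle, so neither $\eta$ nor $\mu$ is a cycle (Remark~\ref{remark:trivial_preservation}), whence $\eta_0,\mu_0\notin\range(p)$; since $\mu_0\in\bar\eta$ we have $\mu_0=\bar f^c(\eta_0)$ for some $c\in\bZ$, and after swapping $\eta$ and $\mu$ if necessary we may take $c>0$ (if $c=0$ then $\eta=\mu$, as $\eta_0=\mu_0$ are initial points of non-cyclic orbits). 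Now apply Definition~\ref{definition:new_strong} with $q:=\bar f$, the triple $(\mu_0,\eta_0,\eta_0)$ (so the "$\mu$"-slot is the orbit $\eta$) and $m_1=m_2=c$; since $\bar f^c(\eta_0)=\mu_0$, the second bullet produces $m_1'>0$ with $p^{m_1'}(\eta_0)=\mu_0$, so $\mu_0\in\range(p)$ — contradicting $\mu_0\notin\range(p)$. Hence $\bar\eta\neq\bar\mu$.

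The only genuinely delicate point — and hence the main obstacle — is the possibility that the finite orbit $\eta$ is too short to see the feature of $\bar\eta$ (for instance, $\eta=(\bar\eta_0,\bar\eta_1)$ sitting inside a long cycle or inside a long spiral, in which case $\eta$ would merely be a quasi-cycle). This is precisely what the first bullet of Definition~\ref{definition:new_strong} (two distinct witnesses when the relevant power of $p$ is undefined), combined with the divisibility in the third bullet, is engineered to exclude; getting this bookkeeping right — rather than any hard tree geometry — is where the care goes.
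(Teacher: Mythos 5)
Your proof reaches the right conclusion and uses the same basic device as the paper (applying Definition~\ref{definition:new_strong} to force the finite orbit to ``catch up'' with the total one), but you organize the argument differently and with some extra work. You prove \ref{proposition:consequences:orbit_type} first by a four-way case split on the type of $\bar\eta$ and then derive \ref{proposition:consequences:not_qc}, whereas the paper proves \ref{proposition:consequences:not_qc} first (taking $k$ to be the \emph{minimal} index with $\eta_0\meet\bar\eta_k\neq\bar\eta_k\meet\bar\eta_{2k}$ and running the two-witness argument once), and then gets \ref{proposition:consequences:orbit_type} almost for free: cycles and spirals are handled together simply by noting that $\eta_{m_1'}$ is \emph{comparable} to $\eta_0$, which by definition of the (cycle- or spiral-) length forces $m_1'\geq k$ --- no divisibility from the third bullet is needed; and the comb case follows from \ref{proposition:consequences:not_qc} together with Remark~\ref{remark:trivial_preservation} and Remark~\ref{remark:quasi_cycle_meets_cycle} without ever invoking the comb's own spiral length. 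In your comb case you instead feed $m_1=m_2=2k$ (with $k$ the comb's spiral length) into the third bullet and conclude $m_1',m_1''\equiv 0\pmod k$; this silently uses that for a $k$-comb the minimal $l$ with $\bar\eta_0\meet\bar\eta_l\neq\bar\eta_l\meet\bar\eta_{2l}$ is exactly $k$ (i.e.\ that the orbit cannot simultaneously be a descending $l$-comb for some $l<k$). That fact is true and follows from Proposition~\ref{proposition:needed_comb}, and the paper's own parenthetical in \ref{proposition:consequences:not_qc} tacitly assumes the analogous statement, so I would not call it a gap so much as a step you should flag; but it is precisely the step the paper's ordering is designed to avoid. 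Your \ref{proposition:consequences:distinct_orbits} is also a little longer (a cycle/non-cycle split plus a careful choice of $c$) than the paper's, which just applies the second bullet to $\bar\eta_{m_1}=\bar\mu_{m_2}$ directly; both are fine.
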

	\begin{proof}
		We may assume without loss of generality that $f=\bar f\in \Aut(\bT)$. Write $\bar\eta$ for the $f$-orbit extending $\eta$, enumerated as $(\bar\eta_i)_{i\in \mathbf Z}$ so that $\eta_0=\bar\eta_0$ and $f(\bar\eta_i)=\bar\eta_{i+1}$. Likewise, let $\bar\mu\supseteq \mu$ be the $f$-orbit containing $\mu$.
		
		For \ref{proposition:consequences:not_qc}, since $\bar\eta$ is not a quasi-cycle, there is some $k$ such that $\eta_0\meet \bar\eta_k\neq \bar\eta_k\meet \bar\eta_{2k}$. We may assume without loss of generality that this $k$ is minimal. We claim that $\eta_{2k}\isdefined$, whence $\bar \eta_k=\eta_k$ and $\bar\eta_{2k}=\eta_{2k}$ and we are done. Otherwise, suppose towards contradiction that $\eta_{2k}\isundefined$, and apply PEC with $m_1=2k$, $m_2=k$ (and $\eta=\mu=\zeta = \eta$) and conclude that there positive $m_1',m_2',m_1'',m_2''$ with $m_1'\equiv m_1''\equiv 2k\pmod k$ such that $\eta_{m_1'}\isdefined,\eta_{m_1''}\isdefined$. But since $m_1',m_1''$ are distinct and positive, it follows that $\max(m_1',m_1'')\geq 2k$, so $\eta_{2k}\isdefined$, a contradiction.
		
		For \ref{proposition:consequences:orbit_type}, suppose first that $\bar\eta$ is a $k$-spiral or a $k$-cycle, so $\eta_0$ is comparable to $\bar\eta_k$. We claim that $\eta_k\isdefined$, whence $\bar\eta_k=\eta_k$ and we are done. Indeed, by PEC (with $m_1=m_2=k$), there is some $m_1'>0$ such that $\eta_{m_1'}\isdefined$ and $\otp(\eta_0,\eta_{m_1'})=\otp(\eta_0,\bar\eta_k)$. But then $\eta_{m_1'}=\bar\eta_{m_1'}$ is comparable to $\eta_0=\bar \eta_0$, so by definition of a $k$-cycle or $k$-spiral, $m_1'\geq k$, so $\eta_{k}\isdefined$.
		
		Otherwise, if $\bar\eta$ is a $k$-comb, then it is not a quasi-cycle, so $\eta$ is not a quasi-cycle by \ref{proposition:consequences:not_qc}. Since $\bar\eta$ is not a cycle, nor a spiral, neither is $\eta$, so it is an $l$-comb, whence $\bar\eta$ is an $l$-comb, so $l=k$.
		
		Finally, if $\bar\eta$ is a quasi-cycle, then by Remark~\ref{remark:trivial_preservation}, so is $\eta$, completing the proof of \ref{proposition:consequences:orbit_type}.
		
%		For \ref{proposition:consequences:spiral}, suppose towards contradiction that $\eta$ is a $k$-spiral, but $\eta_{2k}\isundefined$. Assume for simplicity that $\eta$ is descending (the ascending case is analogous). Then $\eta_0>\eta_k>\bar\eta_{2k}$. By strongness with $m_1=m_2=2k$, we have $m_1',m_1''>0$ such that $\eta_0>\eta_{m_1'},\eta_{m_1''}$. By \todo{spiral}, $k$ divides $m_1'$ and $m_1''$, and since $\eta_{2k}\isundefined$, $m_1'\neq m_1''$, whence $\max(m_1',m_1'')\geq 2k$, a contradiction.
		
		Finally, \ref{proposition:consequences:distinct_orbits} is immediate by PEC: if $\bar\eta_{m_1}=\bar\mu_{m_2}$, then we have some $m_1',m_2'$ such that $\eta_{m_1'}=\mu_{m_2'}$.
	\end{proof}

	\begin{remark}
		\label{remark:immediate_extension_stays_pec}
		It is easy to see that an immediate extension (in the sense of Definition~\ref{definition:immediate}) of a PEC automorphism of $\bT$ is itself PEC (because the initial points are the same, since there can be no new cycles, no new orbits, and we only extend existing orbits positively).\xqed{\lozenge}
	\end{remark}
	
	\begin{remark}
		\label{remark:no_qcycles}
		Note that if the answer to Question~\ref{question:not_qc_ext} is positive, then it follows by Proposition~\ref{proposition:consequences}\ref{proposition:consequences:orbit_type} that a PEC partial automorphism has no quasi-cyclic orbits. This would make the proof of Theorem~\ref{theorem:main} below a bit simpler --- namely, we could drop the final three paragraphs of the proof, and we would no longer need Proposition~\ref{proposition:needed_qc}.\xqed{\lozenge}
	\end{remark}
	
	The following Theorem is one of the main building blocks of the proof that the universal countable meet-tree has a generic automorphism.
	\begin{theorem}
		\label{theorem:main}
		Let $p$ be a PEC finite partial automorphism of $\bT$, and $A$ the meet-tree generated by the union of all orbits of $p$.
		
		Let $\xi=(\xi_0,\xi_1,\ldots,\xi_n)$ be an orbit of $p$ such that no non-cyclic orbit of $p$ has smaller length.
		
		Then if $p\cup \{(\xi_n, v)\}$ and $p\cup \{(\xi_n,w)\}$ are both extensions of $p$ to a partial automorphism of $\bT$, then $\tp(v/A)=\tp(w/A)$.
	\end{theorem}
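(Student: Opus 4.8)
The plan is to reduce to quantifier-free types and show that $\qftp(v/A)$ is completely forced by the requirement that $p\cup\{(\xi_n,v)\}$ be a partial automorphism. Since $\bT$ eliminates quantifiers (Fact~\ref{fact:generic_trees}), it suffices to prove $\qftp(v/A)=\qftp(w/A)$. The meet-tree $A$ is finite, and for $a,b$ in it one has $v\meet(a\meet b)=\min(v\meet a,v\meet b)$ along the branch below $v$; hence $\max_{a\in A}a\meet v$ is attained at an orbit element, and by Fact~\ref{fact:typesintrees} it is enough to show that for every orbit element $a$ of $p$ the datum ``the element $v\meet a$, together with whether $v\le a$'' is the same whether we adjoin $v$ or $w$.

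Consider first the substructure $R\subseteq A$ generated by $\range(p)$. Every non-initial orbit element of $p$, and every element of a cyclic orbit, lies in $\range(p)$; so $A$ is generated by $R$ together with the initial points $\mu_0$ (in the sense of Definition~\ref{definition:initial_point}) of the non-cyclic orbits of $p$. Now for $b_1,\dots,b_\ell\in\range(p)$, writing $b_i=p(b_i')$ with $b_i'\in\dom(p)$, the fact that $q:=p\cup\{(\xi_n,v)\}$ preserves quantifier-free types over $\emptyset$ gives $\qftp(v,b_1,\dots,b_\ell)=\qftp(\xi_n,b_1',\dots,b_\ell')$, whose right-hand side is part of the quantifier-free diagram of the union of the orbits of $p$, hence of $A$. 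As every element of $R$ is a meet of a tuple from $\range(p)$, this determines $\qftp(v/R)$; in particular $v\meet a$ is forced for every orbit element $a\in\range(p)$.

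It remains to handle an initial point $\mu_0$ of a non-cyclic orbit $\mu$ (possibly $\mu=\xi$). Since $\xi_n\in\range(p)\subseteq R$, the elements $v\meet\xi_n$ and $\mu_0\meet\xi_n$ are already determined, and they are comparable (both lying below $\xi_n$). Applying Fact~\ref{fact:lemmaxyz} to the triple $v,\mu_0,\xi_n$ gives $v\meet\mu_0\meet\xi_n=\min(v\meet\xi_n,\mu_0\meet\xi_n)$, and when $v\meet\xi_n\ne\mu_0\meet\xi_n$ it further gives $v\meet\mu_0=\min(v\meet\xi_n,\mu_0\meet\xi_n)$, which is determined. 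The surviving case is $v\meet\xi_n=\mu_0\meet\xi_n=:t$, where so far all we know is that $v\meet\mu_0$ lies on the chain $A_{\le\mu_0}$ weakly above $t$, and we must pin it down.

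This last case is the heart of the matter, and it is where PEC (Definition~\ref{definition:new_strong}) enters. Write $v=q^{\,n+1}(\xi_0)$, where $\xi_0$ is an initial point of the non-cyclic orbit $\xi$. For any orbit element $a$ of $p$ against which we wish to locate $v\meet\mu_0$, write $a=p^{\,j}(a_0)$ with $a_0$ an initial point and $j\ge 1$, and apply the PEC condition to $q$ with $\mu_0$ as the time-$0$ point, $\xi$ (with $m_1=n+1$) as the orbit ``$\mu$'', and $a_0$ (with $m_2=j$) as the orbit ``$\zeta$''. Since $p^{\,n+1}(\xi_0)$ is undefined, the clause ``$\mu_{m_1}\isundefined$'' applies, producing positive integers $m_1'\ne m_1''$ and $m_2',m_2''$ with $p^{m_1'}(\xi_0),p^{m_1''}(\xi_0),p^{m_2'}(a_0),p^{m_2''}(a_0)$ all defined, such that
\[\qftp(\mu_0,v,a)=\qftp\bigl(\mu_0,p^{m_1'}(\xi_0),p^{m_2'}(a_0)\bigr)=\qftp\bigl(\mu_0,p^{m_1''}(\xi_0),p^{m_2''}(a_0)\bigr),\]
and with $n+1\equiv m_1'\equiv m_1''\pmod k$ whenever the $q$-orbit $\bar\xi$ of $\xi$ is not quasi-cyclic ($k$ being its spiral length). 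The right-hand configurations lie inside $A$, so $\qftp(\mu_0,v,a)$ is realised inside $A$; the remaining task is to check that it is realised in only one way — equivalently, that the value $p^{m_1'}(\xi_0)\meet\mu_0$ (and the whole configuration of $\mu_0,v,a$) does not depend on the legal choice of witness $m_1'$. For this I would split into cases after Proposition~\ref{proposition:consequences}: since $\xi$ is a shortest non-cyclic orbit, $\bar\xi$ is a cycle, a short ascending/descending spiral or comb, or a quasi-cycle, and then Propositions~\ref{proposition:spirals}, \ref{proposition:needed_comb} and \ref{proposition:needed_qc} show — via the congruence $m_1'\equiv n+1\pmod k$ in the spiral/comb case, and via the pseudo-period in the quasi-cyclic case — that all admissible $p^{m_1'}(\xi_0)$ have the same meet with $\mu_0$, and likewise that $\otp(v/A_{\le\mu_0})$ and whether $v\ge\mu_0$ are fixed. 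Running this for all relevant $a$ determines $v\meet\mu_0$ together with the cut of $v$ in $A_{\le\mu_0}$, and with every $v\meet a$ thus fixed Fact~\ref{fact:typesintrees} yields $\qftp(v/A)=\qftp(w/A)$. The main obstacle is precisely this uniqueness-of-realisation step: PEC only asserts that the relevant configuration occurs somewhere in $A$, and one must exploit the rigidity of spiral/comb/quasi-cyclic orbits — together with the two distinct witnesses and the built-in congruence that PEC supplies — to see that ``somewhere'' is a single canonical location.
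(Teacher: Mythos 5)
Your opening reductions are sound and parallel the paper's setup: passing to quantifier-free types via elimination of quantifiers, observing that $\max_{a\in A}a\meet v$ is attained at an orbit element, and noting that $\qftp(v/\langle\range(p)\rangle)$ is forced because $q$ carries $(\xi_n,b_1',\ldots)$ to $(v,b_1,\ldots)$ --- this is exactly the paper's $B=\langle\range(p)\rangle$ and the observation $\tp(v/B)=p(\tp(\xi_n/\dom(p)))=\tp(w/B)$. Your reduction to initial points of non-cyclic orbits and the Fact~\ref{fact:lemmaxyz} dichotomy ($v\meet\xi_n\ne\mu_0\meet\xi_n$ versus equality) also tracks the paper's passage to $\dom(p)\setminus B$ and the interval $(\alpha,\beta)$. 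So the architecture is the same.

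The problem is that the heart of the argument --- the step you correctly identify as ``the main obstacle'' --- is only gestured at, and your sketch of it is not accurate. You write that ``Propositions~\ref{proposition:spirals}, \ref{proposition:needed_comb} and \ref{proposition:needed_qc} show \ldots\ that all admissible $p^{m_1'}(\xi_0)$ have the same meet with $\mu_0$.'' Two issues. First, those propositions describe meets \emph{within a single orbit} ($\eta_i\meet\eta_j$ as $i,j$ range over one orbit); they say nothing directly about $\xi_{m_1'}\meet\mu_0$, a \emph{cross-orbit} meet, and the claim that this is constant across $m_1'\equiv n+1\pmod k$ is simply false in general (in an ascending spiral the $\xi_{m_1'}$ form a strictly increasing chain, and their meets with an outside point $\mu_0$ can grow). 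Second, and more importantly, the paper's actual analysis does not prove invariance of the witness; it proves, for each case, a biconditional of the form ``$\eta_0\le v$ iff $\eta_0\le\xi_{n+1-k}$'' or rules out configurations like $\eta_0\meet\beta=v$ altogether. Those arguments genuinely use a second orbit $\mu$ with $\mu_i\ge\beta$, a maximality argument on $i'$, and Proposition~\ref{proposition:spirals} applied to $f$-orbits of meets --- none of which appears in your outline. The paper also splits into the cases $v=v'$ and $v'<v$ (with a further split by whether $\xi$ is a spiral, a comb, or a quasi-cycle inside the second case), and the arguments are structurally different in the two cases; your single PEC application followed by ``check uniqueness'' does not capture this. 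Finally, a small point: you assert $\xi_n\in\range(p)$, which fails when $\xi$ is a singleton non-cyclic orbit ($n=0$); this needs to be either handled or excluded. In short, the skeleton is right, but the load-bearing case analysis is missing, and what you do say about how it would go does not match what is actually needed.
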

	Note that in the context of the theorem, both $p\cup \{(\xi_n, v)\}$ and $p\cup \{(\xi_n,w)\}$ are immediate extensions of $p$ in the sense of Definition~\ref{definition:immediate}, which will allow us to apply Lemma~\ref{lemma:sufficient_for_determined}.
	\begin{proof}
		Put $B:=\langle \range(p)\rangle$, $p_v:=p\cup\{(\xi_n,v)\}$ and $p_w:=p\cup\{(\xi_n,w)\}$. Note that the conclusion is trivially true if $\xi$ is a cycle, so in the rest of the proof, we assume that $\xi$ is not a cycle, and so $\xi_{n+1}\isundefined$. Furthermore, $\tp(v/\range(p))=p(\tp(\xi_n/\dom(p)))=\tp(w/\range(p))$, and hence $\tp(v/B)=\tp(w/B)$.

		\begin{claim}\label{claim:typesnotequal}
			For every $\eta_i\in \dom(p)$, we have $\tp(\eta_i/B) \neq \tp(v/B)$.
		\end{claim}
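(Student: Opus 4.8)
\emph{Proof sketch.} Since $\range(p)\subseteq B$, it suffices to prove the a priori stronger statement that $\tp(\eta_i/\range(p))\neq\tp(v/\range(p))$ for every $\eta_i\in\dom(p)$. Recall from the paragraph above that $\tp(v/\range(p))=p(\tp(\xi_n/\dom(p)))$, and note that $v\notin\range(p)$ (because $p_v$ is injective and $\xi_n\notin\dom(p)$). I would split into two cases according to whether $\eta_i\in\range(p)$. If $\eta_i\in\range(p)$, then $\tp(\eta_i/\range(p))$ and $\tp(v/\range(p))$ already disagree on the formula $x=\eta_i$, since $v\neq\eta_i$; so we are done. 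The only substantive case is therefore $\eta_i\in\dom(p)\setminus\range(p)$. Since cyclic orbits of $p$ lie entirely inside $\range(p)$, and the unique element of a non-cyclic orbit that lies in $\dom(p)\setminus\range(p)$ is its initial point, this forces $\eta_i=\eta_0$, where $\eta=(\eta_0,\dots,\eta_m)$ is a non-cyclic orbit of $p$ and $\eta_0\notin\range(p)$ (in particular $\eta_0$ is an initial point of $p$, cf.\ Definition~\ref{definition:initial_point}).

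For this remaining case the plan is a proof by contradiction. Suppose $\tp(\eta_0/\range(p))=\tp(v/\range(p))=p(\tp(\xi_n/\dom(p)))$. Since $\xi_n\notin\dom(p)$ and $\eta_0\notin\range(p)$, Remark~\ref{remark:suff_p_aut} gives that $q:=p\cup\{(\xi_n,\eta_0)\}$ is a finite partial automorphism of $\bT$; informally, $q$ splices the loose end $\xi_n$ of the orbit $\xi$ onto the loose start $\eta_0$ of the orbit $\eta$. If $\eta\neq\xi$, then the $q$-orbit through $\xi$ coincides with the $q$-orbit through $\eta$, so $q$ merges two distinct orbits of $p$ --- contradicting Proposition~\ref{proposition:consequences}\eqref{proposition:consequences:distinct_orbits}. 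If instead $\eta=\xi$, i.e.\ $\eta_0=\xi_0$, then $q$ permutes the pairwise distinct points $\xi_0,\dots,\xi_n$ cyclically (they are pairwise distinct because $\xi$ is a non-cyclic orbit of $p$ with $\xi_n\notin\dom(p)$), so the $q$-orbit through $\xi$ is an $(n+1)$-cycle; Proposition~\ref{proposition:consequences}\eqref{proposition:consequences:orbit_type} then forces $\xi$ itself to be an $(n+1)$-cycle, contradicting the standing assumption that $\xi$ is not a cycle. Either way the supposition fails, so $\tp(\eta_0/\range(p))\neq\tp(v/\range(p))$, as required.

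I do not expect a genuine obstacle here: the only thing to get right is the bookkeeping in the first paragraph, isolating $\eta_i=\eta_0\notin\range(p)$ as the single dangerous configuration, after which Proposition~\ref{proposition:consequences} --- i.e.\ the rigidity of the orbit structure of a PEC partial automorphism --- does all the work. One should apply Proposition~\ref{proposition:consequences} directly to the finite partial automorphism $f=q$, which is permitted, rather than passing to a total automorphism first. Note also that the minimality hypothesis on the length of $\xi$ plays no role in this particular claim.
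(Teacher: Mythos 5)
Your proof is correct and takes essentially the same approach as the paper's: under the contradiction hypothesis $p\cup\{(\xi_n,\eta_i)\}$ is a partial automorphism, and Proposition~\ref{proposition:consequences}\ref{proposition:consequences:orbit_type} or \ref{proposition:consequences:distinct_orbits} gives a contradiction according to whether $\eta=\xi$. The explicit case split on $\eta_i\in\range(p)$ and the appeal to Remark~\ref{remark:suff_p_aut} are cosmetic reorganizations of what the paper derives inline.
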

		\begin{clmproof}
			Suppose towards contradiction that $\tp(\eta_i/B) = \tp(v/B)$. First, we show that in this case $p\cup \{(\xi_n,\eta_i)\}$ is a partial automorphism. Let $\bar c$ enumerate $\dom(p)$. Let $\varphi(x,\bar c)$ be a quantifier-free formula. We need to show that if $\varphi(\xi_n,\bar c)$ holds, then so does $\varphi(\eta_i,p(\bar c))$. But since $p_v$ is a partial automorphism, $\varphi(\xi_n,\bar c)$ holds if and only if $\varphi(v,p(\bar c))$ holds. But this is true if and only if $\varphi(x,p(\bar c))\in \tp(v/B)=\tp(\eta_i/B)$, i.e.\ $\varphi(\eta_i,p(\bar c))$ holds.
			
			Now, if $\eta=\xi$ and $p\cup \{(\xi_n,\eta_i)\}$ is a partial automorphism, it means that $\xi$ can be turned into a cycle in an extension of $p$. Since we have assumed $\xi$ is not already a cycle, this contradicts Proposition~\ref{proposition:consequences}\ref{proposition:consequences:orbit_type}. Otherwise, if $\eta\neq \xi$ and $p\cup \{(\xi_n,\eta_i)\}$ is a partial automorphism, we contradict Proposition~\ref{proposition:consequences}\ref{proposition:consequences:distinct_orbits}.
		\end{clmproof}
		
		Let $v':= \max_{b\in B} b\meet v$ and likewise, $w':=\max_{b\in B} b\meet w$.

		\begin{claim}
			\label{claim:sufficient_condition}
			$\tp(v/A)=\tp(w/A)$ if and only if $v'$ and $w'$ have the same order type over $A_{\leq b}$ for some $b\geq v',w'$.
		\end{claim}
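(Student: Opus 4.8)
The plan is to reduce the statement to Fact~\ref{fact:typesintrees}, applied inside $\bT$ (which has quantifier elimination, so that $\tp$ and $\qftp$ coincide) with respect to the \emph{finite} substructure $A$. The one point requiring real work is to see that $v'$ and $w'$, although defined using only $B=\langle\range(p)\rangle$, are actually the ``prime'' elements of $v$ and $w$ over the larger structure $A$: I claim $v'=\max_{a\in A}a\meet v$ and $w'=\max_{a\in A}a\meet w$. Since $A=\langle\dom(p)\cup\range(p)\rangle$, every element of $A$ is a meet $c\meet c'$ with $c,c'\in\dom(p)\cup\range(p)$ (first bullet of Fact~\ref{fact:lemmaxyz} and the remark following it), and $(c\meet c')\meet v\in\{c\meet v,c'\meet v\}$ by the same bullet; hence $\max_{a\in A}a\meet v=\max_{c\in\dom(p)\cup\range(p)}c\meet v$, and likewise $\max_{b\in B}b\meet v=\max_{b\in\range(p)}b\meet v=v'$. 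So it suffices to check that $\eta_i\meet v\le v'$ for every $\eta_i\in\dom(p)$. By Claim~\ref{claim:typesnotequal} we have $\qftp(\eta_i/B)\ne\qftp(v/B)$, so the contrapositive of Corollary~\ref{corollary:meet_above_prime} gives $\eta_i\meet v\le\max_{b\in B}b\meet\eta_i=b_1\meet\eta_i$ for some $b_1\in B$; then the third bullet of Fact~\ref{fact:lemmaxyz} (with $a=\eta_i$, $b=b_1$, $c=v$) yields $\eta_i\meet v\le b_1\meet v\le v'$. The argument for $w$ is identical. (This is the step where PEC is really used, via Claim~\ref{claim:typesnotequal}.)

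Granting this, Fact~\ref{fact:typesintrees} applied to $v$ in $\bT$ over $A$ says that $\tp(v/A)$ is determined by three pieces of data: whether $v=v'$ or $v'<v$; a choice of $a\in A$ with $a\ge v'$ (such $a$ exists, e.g.\ one realising $\max_{a\in A}a\meet v=v'$); and $\otp(v'/A_{\le a})$ --- and likewise for $w$. Observe also that $v=v'$ holds iff $v\le b$ for some $b\in B$, which is a quantifier-free condition over $B$; since $\tp(v/B)=\tp(w/B)$ was established before the claim, this forces $v=v'\iff w=w'$.

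For the ``if'' direction: given $b\ge v',w'$ with $\otp(v'/A_{\le b})=\otp(w'/A_{\le b})$ --- which we may take with $b\in A$ --- use this same $b$ as the distinguished element above the prime in Fact~\ref{fact:typesintrees} for both $v$ and $w$. The three pieces of data then agree (the first by the previous paragraph, the second trivially, the third by hypothesis), so $\qftp(v/A)=\qftp(w/A)$, i.e.\ $\tp(v/A)=\tp(w/A)$. For the ``only if'' direction: assume $\tp(v/A)=\tp(w/A)$, equivalently $\qftp(v/A)=\qftp(w/A)$. Pick $a_0\in A$ with $a_0\meet v=v'$; the quantifier-free formula $\bigwedge_{a\in A}(a\meet x\le a_0\meet x)$ lies in $\qftp(v/A)=\qftp(w/A)$, so $a_0\meet w=w'$ as well, and $b:=a_0$ satisfies $b\ge v',w'$. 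The map fixing $A$ pointwise and sending $v\mapsto w$ is a partial automorphism of $\bT$, which by Remark~\ref{remark:extension_to_substructure} extends to an isomorphism $\langle Av\rangle\to\langle Aw\rangle$ carrying $v'=a_0\meet v$ to $a_0\meet w=w'$ and fixing $A_{\le a_0}$ pointwise; hence $\otp(v'/A_{\le b})=\otp(w'/A_{\le b})$, as required.

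The main obstacle is the first paragraph: recognising that the element $v'$, computed from $B$ alone, is automatically the prime of $v$ over all of $A$. Once that identification is made, everything else is bookkeeping with Fact~\ref{fact:typesintrees} and the ultrametric-type identities of Fact~\ref{fact:lemmaxyz}.
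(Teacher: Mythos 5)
Your proof is correct and takes essentially the same route as the paper: identify $v'$ as $\max_{a\in A}a\meet v$ (using Claim~\ref{claim:typesnotequal} together with Corollary~\ref{corollary:meet_above_prime}), observe $v=v'\iff w=w'$ from $\tp(v/B)=\tp(w/B)$, and then invoke Fact~\ref{fact:typesintrees}. The paper is terser --- it locates a witness $\eta_0\in\dom(p)\setminus\range(p)$ above a hypothetical $a\in A\setminus B$ with $a\meet v>v'$ and applies Corollary~\ref{corollary:meet_above_prime} directly, and it leaves the final reduction to Fact~\ref{fact:typesintrees} as a one-liner --- whereas you decompose the max over $A$ via the first bullet of Fact~\ref{fact:lemmaxyz} and spell out both directions of the biconditional, but the substance is the same.
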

		\begin{clmproof}
			We have that $v'=\max_{a\in A} a\meet v$. Indeed, otherwise, we would have $a\meet v>v'$ for some $a\in A\setminus B$, and hence also for some $\eta_0\in \dom(p)\setminus \range(p)$ (namely, any such that $\eta_0\geq a$), we would have $\eta_0\meet v>v'$, and so, by Corollary~\ref{corollary:meet_above_prime}, $\tp(\eta_0/B)=\tp(v/B)$, contradicting Claim~\ref{claim:typesnotequal}. Analogously, $w'=\max_{a\in A} a\meet w$.
			
			Furthermore, $v=v'$ holds if and only if $w=w'$: these hold if and only if for some $b\in B$, the type $\tp(v/B)=\tp(w/B)$ implies $b\geq x$. Thus, the conclusion follows by Fact~\ref{fact:typesintrees}.
		\end{clmproof}
		
		If $v'\in B$, then $\tp(v/B)$ contains a formula $\varphi(x,B)$ which says ``$x\geq v'$ and $v'=\max_{b\in B} x\meet b$''. Since $\tp(v/B)=\tp(w/B)$, we conclude $w$ also satisfies this formula, so $v'=w'$, and in particular, they have the same order type over $A$, so by Claim~\ref{claim:sufficient_condition}, $\tp(v/A)=\tp(w/A)$, completing the proof.
		
		By the preceding paragraph, for the rest of the proof, we may assume that $v'\notin B$. Note that as we have seen in the proof of Claim~\ref{claim:sufficient_condition}, we have that $v=v'$ if and only $w=w'$, and analogously, since $v'\notin B$, we have $w'\notin B$. We will treat separately the cases when $v=v'\notin B$ and $v\neq v'\notin B$, but first, we make some observations that apply to both of them.
		
		Put $\alpha:=\max\{b\in B\mid b\leq v' \}$ ($-\infty$ if the set is empty), and choose $\beta\in B$ to be minimal above $v'$, so $\beta\geq v'$ and $(v',\beta)\cap B=\emptyset$. Then $v'\in (\alpha,\beta)$ and $(\alpha,\beta)\cap B=\emptyset$. Note that $\beta\meet v=v'$. Indeed, $\beta\meet v\leq v'$ by definition of $v'$, and since $\beta \geq v'$ and $v\geq v'$, also $\beta\meet v\geq v'$. Since $\tp(v/B)=\tp(w/B)$, it follows that $w'\in (\alpha,\beta)$ and $w'=w\meet \beta$. Notice that analogously, for every $b\in B$ such that $b\geq \beta$, we have also $b\wedge v=v'$ and $b\wedge w=w'$.
		
		In the analysis of the two cases, we will use the $\alpha$ and $\beta$ heavily.
		
		\begin{claim}%[$v'\notin B$]
			\label{claim:form_of_elts}
			Every element of $A\cap (\alpha,\beta)$ is of the form $\eta_0\meet \beta$ for some $\eta_0\in \dom(p)\setminus B$.
		\end{claim}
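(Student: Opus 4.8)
The plan is to reduce the statement to a short computation with Fact~\ref{fact:lemmaxyz}. Since $A$ is generated as a meet-tree by the union of all orbits of $p$, and that union is exactly $\dom(p)\cup\range(p)$, the observation recorded right after Fact~\ref{fact:lemmaxyz} shows that every element of $A$ can be written as $c_1\meet c_2$ with $c_1,c_2\in\dom(p)\cup\range(p)$. So, given $a\in A\cap(\alpha,\beta)$, I would first write $a=c_1\meet c_2$ in this form. Note that $a\notin B$, because $a\in(\alpha,\beta)$ while $(\alpha,\beta)\cap B=\emptyset$.

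Next I would isolate a good factor. Since $\range(p)\subseteq B$ but $a\notin B$, at least one of $c_1,c_2$ — say, after relabelling, $c_1$ — lies outside $B$; as $c_1\in\dom(p)\cup\range(p)$ and $c_1\notin\range(p)$, this forces $c_1\in\dom(p)\setminus B$, so $c_1$ is the natural candidate for $\eta_0$, and it remains to show $a=c_1\meet\beta$. Now $a=c_1\meet c_2\leq c_1$ and $a<\beta$ (since $a\in(\alpha,\beta)$), so $a$ is a common lower bound of $c_1$ and $\beta$, whence $a\leq c_1\meet\beta$. If equality holds we are done. Otherwise $c_1\meet\beta>c_1\meet c_2$, and the second bullet of Fact~\ref{fact:lemmaxyz} gives $a=c_1\meet c_2=\beta\meet c_2$; since $\beta\in B$ and $a\notin B$, this forces $c_2\notin B$, hence $c_2\in\dom(p)\setminus B$ by the same reasoning as for $c_1$, and $a=\beta\meet c_2$ is again of the required form, now with $c_2$ in the role of $\eta_0$.

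I do not anticipate a genuine obstacle; the argument is essentially a single application of the ``ultrametric'' Fact~\ref{fact:lemmaxyz}. The only point that needs a little care is that one cannot decide in advance which of $c_1$ and $c_2$ is the correct choice of $\eta_0$, so both the case $c_1\meet\beta=a$ and the case $c_1\meet\beta>a$ have to be treated, the latter transferring the role of $\eta_0$ from $c_1$ to $c_2$.
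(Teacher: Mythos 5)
Your proof is correct and takes essentially the same approach as the paper: write $a$ as a meet of two generators of $A$, apply Fact~\ref{fact:lemmaxyz} to conclude that $a$ equals the meet of one of them with $\beta$, and then deduce that the relevant generator lies in $\dom(p)\setminus B$ from $a\notin B$ and $\range(p)\subseteq B$. The only cosmetic difference is that you fix a factor outside $B$ up front and then possibly hand the role of $\eta_0$ over to the other factor, whereas the paper first uses Fact~\ref{fact:lemmaxyz} to pick which factor satisfies $\eta_i\meet\beta=a$ and then shows that factor is outside $B$; both orderings of these two observations work.
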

		\begin{clmproof}
			Let $a\in A\cap (\alpha,\beta)$. Then for some $\eta,\mu,i,j$ we have $a=\eta_i\meet\mu_j$. Since $a<\beta$, we have $\eta_i\meet \beta\geq a$ and $\mu_j\meet \beta \geq a$. By Fact~\ref{fact:lemmaxyz}, only one of these inequalities can be strict. Suppose without loss of generality that $\eta_i\meet \beta = a$. Since $a\in (\alpha,\beta)$, it follows that $a\notin B$, so (because $\beta\in B$) $\eta_i\notin B$, so also $\eta_i\notin \range(p)$, whence $\eta_i\in \dom(p)$, which completes the proof.
		\end{clmproof}
		
		Since $v',w'\in (\alpha,\beta)$, by Claim~\ref{claim:form_of_elts} and Claim~\ref{claim:sufficient_condition}, it is enough to show that for every $\eta_0\in \dom(p)\setminus B$, at least two of the following hold:
		\begin{enumerate}[label=(\roman*)]
			\item
			$\eta_0\meet \beta\leq v'$ if and only if $\eta_0\meet \beta\leq w'$,
			\item
			$\eta_0\meet \beta\geq v'$ if and only if $\eta_0\meet \beta\geq w'$,
			\item
			$\eta_0\meet \beta = v'$ if and only if $\eta_0\meet \beta=w'$,
		\end{enumerate}
		(because any two of these conditions imply that $v',w'$ have the same order type over $(\alpha,\beta)\cap A$, and hence also over $A_{\leq \beta}$.)
		
		Now, if $\eta_0\wedge \beta\notin (\alpha,\beta)$, then it is clear that these equivalences hold (because $v',w'\in (\alpha,\beta)$). Hence, for the rest of the proof, let us fix an arbitrary $\eta_0\in \dom(p)\setminus B$ such that $\eta_0\wedge \beta\in (\alpha,\beta)$. The goal is to show that for this $\eta_0$, two of the above three equivalences hold.
		
		\textbf{Case 1:} $v' = v \notin B$.
		
		If $\eta_0\meet \beta\geq v'$, then trivially $\eta_0 \geq v$. Since $p_v$ extends $p$ (as a partial automorphism of $\bT$), $v=p_v^{n+1}(\xi_0)$ and $\xi_{n+1}\isundefined$, by Definition~\ref{definition:new_strong} (applied to $m_1=m_2=n+1$, $q=p_v$), there are $i<j\leq n$ such that $\xi_i,\xi_j\leq \eta_0$, and hence they are comparable. Since $\xi$ is not a cycle, it follows that it must be a spiral (see Definition~\ref{definition:orbit_types} to recall the orbit types). Thus, if $\xi$ is not a spiral, then for all $\eta_0\in \dom(p)\setminus B$ we have $\eta_0\meet \beta<v'$ and analogously, $\eta_0\meet \beta<w'$.
		
		By the preceding paragraph, we may assume that $\xi$ is spiral, of spiral length $k$ for some $k$. We will show (*) that $\eta_0\meet \beta\geq v$ (equivalently, $\eta_0\geq v$) if and only if $\eta_0\geq \xi_{n+1-k}$ (and so, by symmetry, $\eta_0\meet\beta\geq w$ if and only if $\eta_0\geq \xi_{n+1-k}$).
		
		Suppose first that $\xi$ is descending. Then $v< \xi_{n+1-k}$, so if $\eta_0\geq \xi_{n+1-k}$, then trivially $\eta_0\geq v$. Conversely, if $\eta_0\geq v$, then by Definition~\ref{definition:new_strong} (and the equality $v=p_v^{n+1}(\xi_0)$), for some $i\equiv n+1\pmod k$, we have $\eta_0\geq \xi_i$. Since $i\equiv n+1\equiv n+1-k\pmod k$ and $\xi$ is descending, $\xi_i\geq\xi_{n+1-k}$ and so $\xi_{n+1-k}\leq\eta_0$.
		
		Otherwise, suppose $\xi$ is ascending. Then $v>\xi_{n+1-k}$, so if $\eta_0\geq v$, then trivially $\eta_0\geq \xi_{n+1-k}$. Conversely, if $ \eta_0\not\geq v$, then again by Definition~\ref{definition:new_strong}, there is some $i\equiv n+1\pmod k$, $i\leq n$, such that $\xi_i\not\leq \eta_0$, and as before, we have $\xi_i\leq \xi_{n+1-k}$, so $\xi_{n+1-k}\not\leq \eta_0$, completing  the proof of (*), which shows (ii).
		
		To complete the proof in this case, it is enough to show (iii), i.e.\ that given $\eta_0\in \dom(p)\setminus B$, we have $\eta_0\meet \beta=v$ if and only if $\eta_0\meet \beta=w$. We will show that these equalities are impossible (given that $\xi$ is spiral and $v=v'\notin B$, and hence $w=w'\notin B$). Notice that for some $\mu_i\in \range(p)$ (so without loss of generality, $i>0$) we have that $\mu_i\geq \beta$. Since $\beta=\mu_i\meet\beta>\beta\meet\eta_0$, by Fact~\ref{fact:lemmaxyz}, it follows that $\beta\meet\eta_0=\mu_i\meet\eta_0$.
		
		Suppose $\xi$ is a descending $k$-spiral and $\eta_0\meet \beta=v$. By Definition~\ref{definition:new_strong} (with $q=p_v$, $m_1=n+1$, $m_2=i$), there is some $j\equiv n+1\pmod k$ and $i'>0$ such that $\eta_0\meet \mu_{i'}=\xi_j$. Since $\xi$ is descending, $\xi_j>v$, so $\eta_0\meet\mu_{i'}>\eta_0\meet \mu_i=v$, and by Fact~\ref{fact:lemmaxyz}, we have that $v=\eta_0\meet \mu_i=\mu_i\meet \mu_{i'}$. Since $i,i'>0$, it follows that $v\in B$, a contradiction.
		
		Now, suppose $\xi$ is an ascending $k$-spiral and $\eta_0\meet \beta=v$. Arguing as in the preceding paragraph, we conclude that for some $j\equiv n+1\pmod k$ and a positive $i'$ we have $\eta_0\meet \mu_{i'}=\xi_j$. We may assume without loss of generality that $i'$ is maximal (i.e.\ for all $i''>i'$ there is no $j'\equiv n+1\pmod k$ such that $\eta_0\meet \mu_{i''}=\xi_{j'}$).
		
		We claim that $i'\leq i-k$. Indeed, put $l:=i'-i$ and let $f\in \Aut(\bT)$ be an arbitrary extension of $p_v$; note that the $f$-orbit of $\xi_j$ is still an ascending $k$-spiral, and also that $f^k(\xi_j)\leq f^{n+1-j}(\xi_j)=v\leq \eta_0$, so in particular, $f^k(\xi_j)\not\leq \mu_{i'}$ --- otherwise, $f^k(\xi_j)\leq \mu_{i'}\meet \eta_0=\xi_j$, which would contradict the assumption that the $f$-orbit of $\xi_j$ is an ascending $k$-spiral. Since $\eta_0\meet \mu_i=v>\xi_j=\eta_0\meet \mu_{i'}$, we have $\mu_i\meet \mu_{i'}=\xi_j$, so $\mu_{i'}\meet f^l(\mu_{i'})=f^l(\xi_j)$. Now, $\mu_{i'}\geq \xi_j,f^l(\xi_j)$, so the latter two are comparable, whence $k$ divides $l$ (by Proposition~\ref{proposition:spirals}); this implies that $f^l(\xi_j)$ and $f^k(\xi_j)$ are comparable. On the other hand, since $f^k(\xi_j)\not\leq \mu_{i'}$ and $f^l(\xi_j)\leq \mu_{i'}$, we have $f^l(\xi_j)<f^k(\xi_j)$, so $l<k$. Since $k$ divides $l$ and $l$ cannot be $0$, it follows that $l\leq -k$.
		
		It follows that in fact $j=n+1-k$. Otherwise, if $j< n+1-k$, then $ \xi_{n+1-k}>\xi_j=\mu_i\meet\mu_{i'}$. On the other hand, $\mu_{i-k}\geq \xi_{n+1-k}$, so $\mu_{i-k}\meet \mu_i\geq \xi_{n+1-k}$, and hence $\mu_{i-k}\meet \mu_{i'}=\xi_j$. It would follow that $\mu_i\meet \mu_{i'+k}=\xi_{j+k}$. Now, $\mu_i\meet \eta_0=v>\xi_{j+k}$, so $\mu_{i'+k}\meet\eta_0=\xi_{j+k}$, contradicting the maximality of $i'$.
		
		Thus, we have that $i'\leq i-k$ and $\mu_{i'}\meet \eta_0=\xi_{n+1-k}$. It follows that $\mu_{i'+k}\meet \eta_k=v$ ($\mu_{i'+k}$ is well-defined because $i'\leq i-k$ and $\mu_i$ is well-defined, while $\eta_k$ is well-defined because $\xi$ is the shortest non-cyclic orbit). But $\mu_{i'+k}\meet \eta_k\in B$, contradicting $v\notin B$, so we conclude that $\eta_0\meet \beta \neq v,w$, showing (iii) in Case~1.
		
		Thus, we have completed the proof in Case~1 (i.e.\ under the assumption that $v=v'\notin B$). The following is the last remaining case.
		
		\textbf{Case 2:} $v' < v$ and $v ' \notin B$. Recall that we have fixed some $\eta_0\in \dom(p)\setminus B$ such that $\eta_0\meet \beta\in (\alpha,\beta)$, and we need to show that it compares to $v'$ in the same way as it compares to $w'$.

		\begin{claim}
			\label{claim:elts_if_notequal}
			In the case we are considering, $\eta_0\in (\alpha,\beta)$.
		\end{claim}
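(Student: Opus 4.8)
The plan is to deduce the claim from Claim~\ref{claim:typesnotequal}: I will show that if $\eta_0$ failed to lie in $(\alpha,\beta)$, then $\eta_0$ and $v$ would have the same type over $B$, which is impossible since $\eta_0\in\dom(p)$.

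First, $\alpha<\eta_0$ is immediate, since $\eta_0\geq\eta_0\meet\beta$ and $\eta_0\meet\beta\in(\alpha,\beta)$. So it remains to prove $\eta_0<\beta$. As $\eta_0\meet\beta<\beta$, the element $\eta_0$ is neither equal to nor strictly above $\beta$; hence if $\eta_0\not<\beta$, then $\eta_0$ and $\beta$ are incomparable and $c:=\eta_0\meet\beta\in(\alpha,\beta)$ lies strictly below $\eta_0$. Assume this for contradiction. I would then check, using Fact~\ref{fact:lemmaxyz} and the fact that $B$ is closed under $\meet$, that $\max_{b\in B}b\meet\eta_0=c$: indeed $c=\eta_0\meet\beta\leq\max_{b\in B}b\meet\eta_0$, while if $b_0\meet\eta_0>c=\eta_0\meet\beta$ for some $b_0\in B$, then Fact~\ref{fact:lemmaxyz} gives $c=\eta_0\meet\beta=b_0\meet\beta\in B$, contradicting $c\in(\alpha,\beta)$ and $(\alpha,\beta)\cap B=\emptyset$. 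Thus $\eta_0$ branches off $B$ strictly above $c$, exactly as $v$ branches off $B$ strictly above $v'$, and both $c$ and $v'$ lie in $(\alpha,\beta)$.

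Now I would compare $\qftp(\eta_0/B)$ with $\qftp(v/B)$ via Fact~\ref{fact:typesintrees}, applied with the distinguished element $a=\beta$ in both cases (legitimate since $\beta\in B$ is above both $c$ and $v'$). The relevant data are: the ``prime'' is strictly below the point in both cases ($c<\eta_0$ and, by the Case~2 hypothesis, $v'<v$); the element $a=\beta$ is the same; and the order types of $c$ and of $v'$ over $B_{\leq\beta}$ coincide. For the last point, using that $\alpha$ is the greatest element of $B$ below $v'$ and $\beta$ the least above $v'$, one sees that $\beta$ is also least in $B$ above $c$, and that in the chain $B_{\leq\beta}$ both $c$ and $v'$ have the same lower cut $\{b\in B:b\leq\alpha\}$ and are not realised (their $\geq$- and $>$-cuts coincide), so by Remark~\ref{remark:cuts_and_order_types} they have the same order type over $B_{\leq\beta}$. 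Hence $\qftp(\eta_0/B)=\qftp(v/B)$, so $\tp(\eta_0/B)=\tp(v/B)$ by quantifier elimination in $\bT$ (Fact~\ref{fact:generic_trees}); since $\eta_0\in\dom(p)$, this contradicts Claim~\ref{claim:typesnotequal}. Therefore $\eta_0<\beta$, and combined with $\alpha<\eta_0$ this gives $\eta_0\in(\alpha,\beta)$.

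The routine part is the collection of $\meet$-identities from Fact~\ref{fact:lemmaxyz} and the cut bookkeeping in $B_{\leq\beta}$ (including the degenerate case $\alpha=-\infty$, where both cuts are empty). The conceptual point — which is where Case~2 enters essentially — is that the hypothesis $v\neq v'$ is precisely what forces $\eta_0$ and $v$ to have equal type over $B$ once $\eta_0$ branches off strictly between $\alpha$ and $\beta$; in Case~1 ($v=v'$) the argument would, correctly, break down. Note that PEC is not invoked directly here, only through Claim~\ref{claim:typesnotequal}.
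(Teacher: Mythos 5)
Your proof is correct and takes essentially the same approach as the paper: you compute $\max_{b\in B} b\meet\eta_0 = \eta_0\meet\beta$ via Fact~\ref{fact:lemmaxyz}, note that $\eta_0\meet\beta$ and $v'$ share an order type over $B_{\leq\beta}$, and then invoke Fact~\ref{fact:typesintrees} (with $a=\beta$) together with Claim~\ref{claim:typesnotequal} and the Case~2 hypothesis $v'\neq v$. The only difference is organizational — you argue by contradiction (assume $\eta_0\not<\beta$, derive $\tp(\eta_0/B)=\tp(v/B)$), whereas the paper runs the same comparison directly to conclude $\eta_0=\eta_0\meet\beta$.
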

		\begin{clmproof}
			We have that $\eta_0\meet \beta\in (\alpha,\beta)$. It trivially follows that $\eta_0\meet \beta$ and $v'$ have the same order type over $B_{\leq \beta}$. Let $\bar b\in B$ be such that $\eta_0\meet \bar b=\max_{b\in B} \eta_0\meet b$. Then $\eta_0\meet \beta \leq \eta_0\meet \bar b$. If the inequality is strict, then by Fact~\ref{fact:lemmaxyz}, we have $\eta_0\meet \beta= \bar b\meet \beta\in B\cap (\alpha,\beta)$, which contradicts the fact that $B\cap (\alpha,\beta)=\emptyset$. Otherwise, $\eta_0\meet \beta=\eta_0\meet \bar b\in (\alpha,\beta)$, so $\max_{b\in B} \eta_0\meet b$ has the same order type over $B_{\leq \beta}$ as $v'$. Since, by Claim~\ref{claim:typesnotequal}, $\tp(v/B)\neq \tp(\eta_0/B)$, by Fact~\ref{fact:typesintrees} and the inequality $v\neq v'$, we conclude that $\eta_0= \max_{b\in B} \eta_0\meet b=\eta_0\meet \beta$, and therefore $\eta_0=\eta_0\meet \beta\in (\alpha,\beta)$.
		\end{clmproof}

		\begin{claim}%[$v'\notin B$]
			\label{claim:no_new_meets}
			For all orbits $\mu$ of $p$ and $k>0$, we have that $\eta_0=v \meet \mu_k$ if and only if there are $l,r>0$ such that $\xi_n \meet \mu_{k-1} = \xi_{l-1} \meet \mu_{r-1}$ and $\eta_0 = \xi_l \meet \mu_r$.
			
			In particular, $\eta_0=v'$ if and only if $\eta_0=w'$.
		\end{claim}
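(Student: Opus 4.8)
The plan is to prove the displayed equivalence by transporting the configuration along a total automorphism, and then to read off the ``in particular'' clause using Fact~\ref{fact:lemmaxyz}. Fix once and for all an automorphism $\bar f\in\Aut(\bT)$ with $\bar f\supseteq p_v=p\cup\{(\xi_n,v)\}$, which exists because $p_v$ is finite and $\bT$ is ultrahomogeneous; then $\bar f^{\,n+1}(\xi_0)=v$, while $\bar f^{\,j}(\mu_0)=\mu_j$ and $\bar f^{\,j}(\xi_0)=\xi_j$ for indices $j$ inside the respective orbits. Since $\xi$ is not a cycle it has length at least two, so $\xi_0\in\dom(p)\setminus\range(p)$ is an initial point of $p$; so are $\mu_0$ and (because $\eta_0\in\dom(p)\setminus B\subseteq\dom(p)\setminus\range(p)$, using $\range(p)\subseteq B$) the point $\eta_0$. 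The backward direction is then immediate: given $l,r>0$ with $\xi_n\meet\mu_{k-1}=\xi_{l-1}\meet\mu_{r-1}$ and $\eta_0=\xi_l\meet\mu_r$, apply $\bar f$ to the first equality — legitimate since $l\le n$ and $r$ lies inside $\mu$, so that $\bar f(\xi_{l-1})=\xi_l$, $\bar f(\mu_{r-1})=\mu_r$, $\bar f(\xi_n)=v$ and $\bar f(\mu_{k-1})=\mu_k$ — to obtain $v\meet\mu_k=\xi_l\meet\mu_r=\eta_0$.

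For the forward direction, assume $\eta_0=v\meet\mu_k$. I would invoke the PEC hypothesis (Definition~\ref{definition:new_strong}) with $q:=\bar f$, with the triple of initial points $(\eta_0,\xi_0,\mu_0)$ in the roles of $(\eta_0,\mu_0,\zeta_0)$ there, and with $m_1:=n+1$, $m_2:=k$; since $\bar f^{\,n+1}(\xi_0)=v\isdefined$ and $\bar f^{\,k}(\mu_0)=\mu_k\isdefined$, PEC produces positive integers $l:=m_1'$ and $r:=m_2'$ with $\xi_l\isdefined$, $\mu_r\isdefined$ (hence $0<l\le n$, and $r$ lies inside $\mu$) such that $(\eta_0,v,\mu_k)$ and $(\eta_0,\xi_l,\mu_r)$ have the same quantifier-free type in $\bT$. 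As that type records the meet relations among the coordinates and $\eta_0=v\meet\mu_k$, it follows that $\eta_0=\xi_l\meet\mu_r$. Now apply $\bar f^{-1}$ to both equalities $\eta_0=v\meet\mu_k$ and $\eta_0=\xi_l\meet\mu_r$, using $\bar f^{-1}(v)=\xi_n$, $\bar f^{-1}(\mu_k)=\mu_{k-1}$, $\bar f^{-1}(\xi_l)=\xi_{l-1}$, $\bar f^{-1}(\mu_r)=\mu_{r-1}$ (all valid because $k,l,r>0$); this gives $\xi_n\meet\mu_{k-1}=\bar f^{-1}(\eta_0)=\xi_{l-1}\meet\mu_{r-1}$, which is exactly the asserted condition.

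For the ``in particular'' statement, first observe that since every element of $B$ equals $c\meet c'$ for some $c,c'\in\range(p)$, the first bullet of Fact~\ref{fact:lemmaxyz} shows that $v'=\max_{b\in B}b\meet v$ is attained at a single element of $\range(p)$, which necessarily has the form $\mu_k$ with $k>0$; the same holds for $w'$. By symmetry it suffices to prove $v'=\eta_0\Rightarrow w'=\eta_0$. Assume $v'=\eta_0$ and fix an orbit $\mu$ of $p$ and $k>0$ with $v\meet\mu_k=v'=\eta_0$. The equivalence proved above, applied first to $p_v$ and then to $p_w$ — and using that its right-hand side mentions only $\xi_n$, $\eta_0$ and orbits of $p$, hence is the same for $v$ and for $w$ — yields $w\meet\mu_k=\eta_0$, so $w'\ge\eta_0$. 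If $w'>\eta_0$, then $w\meet b>\eta_0=w\meet\mu_k$ for some $b\in\range(p)$, whence by the second bullet of Fact~\ref{fact:lemmaxyz} we would get $\eta_0=w\meet\mu_k=b\meet\mu_k\in B$, contradicting $\eta_0\in\dom(p)\setminus B$. Hence $w'=\eta_0$.

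The part I expect to be the real obstacle is the forward direction: one has to set up the PEC application with exactly the right orbits, base points, multiplicities and extension so that the witnesses it returns, after being pulled back along $\bar f^{-1}$, land precisely on the indices $l-1$ and $r-1$ occurring in the claimed identity. Once the equivalence is in hand, both the backward direction and the ``in particular'' reduction are short manipulations with Fact~\ref{fact:lemmaxyz}.
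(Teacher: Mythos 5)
Your proof is correct and follows essentially the same route as the paper: the forward direction applies the PEC condition (Definition~\ref{definition:new_strong}) with $m_1=n+1$, $m_2=k$ to produce the witnesses $l,r$, and the backward direction applies the partial automorphism to the given identities; whether one takes $q=p_v$ (as in the paper) or $q=\bar f$ (as you do) is immaterial.

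The one place where you diverge slightly is the \emph{in particular} clause. The paper chooses a single $\mu_k\in B$ with $\mu_k\geq\beta$; since $\beta\meet v=v'$ and $\beta\meet w=w'$ were already established, this immediately gives $v'=v\meet\mu_k$ \emph{and} $w'=w\meet\mu_k$ for the same $\mu,k$, and the main equivalence (whose right-hand side is symmetric in $v,w$) then yields the conclusion in one line. You instead pick a $\mu_k$ realising the maximum for $v$, get $w\meet\mu_k=\eta_0$ from the equivalence, and then need an additional contradiction step (via the second bullet of Fact~\ref{fact:lemmaxyz}, using $\eta_0\notin B$) to rule out $w'>\eta_0$. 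Both are valid; the paper's choice sidesteps the last step by exploiting $\beta$, which was already in play in that part of the proof.
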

		\begin{clmproof}
			Suppose $\eta_0=v\meet \mu_k$. By Definition~\ref{definition:new_strong} (applied to $q=p_v$, $m_1=n+1$, $m_2=k$), we have some $l,r$ such that $\eta_0=\xi_l\meet \mu_r$, and they clearly satisfy the right-hand side. The converse is immediate (just apply $p_v$ to both sides of the first equality).
			
			For the ``in particular'', just note that $v'=v\meet \mu_k$ for some $\mu,k$ (namely, any such that $\mu_k\in B$ and $\mu_k\geq \beta$) and $w'=w\meet \mu_k$ for the same $\mu,k$ (because $\tp(v/B)=\tp(w/B)$) and apply the first part.
		\end{clmproof}
		
		By Claim~\ref{claim:elts_if_notequal} and Claim~\ref{claim:no_new_meets}, it is enough to show that we have $\eta_0\leq v'$ if and only if $\eta_0\leq w'$. Note that since $v'\in (\alpha,\beta)$, this implies that $v'$ and $\eta_0$ are comparable. Note also that since $v'<v$ and $\eta_0<\beta$, it is easy to see that $\eta_0\leq v'$ if and only if $\eta_0<v$.

		Suppose $\xi$ is not a quasi-cycle, then by Proposition~\ref{proposition:consequences}\ref{proposition:consequences:not_qc}, for some (minimal) $k$, we have that $\xi_0\meet \xi_k\neq \xi_k\meet \xi_{2k}$ (so by semilinearity, there is a strict inequality). We claim that under this assumption, $\eta_0\leq v'$ (since $\eta_0<\beta$, equivalently, $\eta_0< v$) if and only if $\eta_0\leq \xi_{n+1-k}\meet \xi_{n+1-2k}$.
		
		If $\xi_0\meet \xi_k<\xi_{k}\meet \xi_{2k}$, then by applying $p_v^{n+1-2k}$ to both sides, $\xi_{n+1-k}\meet \xi_{n+1-2k}<\xi_{n+1-k}\meet v$, which makes one implication trivial. In the other direction, if $v> \eta_0$, then since $\xi_{n+1}\isundefined$, by Definition~\ref{definition:new_strong} (applied to $q=p_v$, $m_1=m_2=n+1$), there are distinct $i_1,i_2\equiv n+1\pmod k$ such that $\xi_{i_1},\xi_{i_2}\geq \eta_0$, whence $\xi_{i_1}\meet \xi_{i_2}\geq \eta_0$. We may assume without loss of generality that $i_1<i_2$, whence $i_1\leq n+1-2k$ (because $i_1\equiv i_2\equiv n+1\pmod k$ and $i_2\leq n$). By Proposition~\ref{proposition:needed_comb}\eqref{it:needed_comb_1b}, it follows that $\xi_{i_1}\meet \xi_{i_2}\leq \xi_{n+1-2k}\meet \xi_{n+1-k}$, and hence $\eta_0\leq \xi_{n+1-2k}\meet \xi_{n+1-k}$.
		
		Otherwise, suppose $\xi_0\meet \xi_k>\xi_k\meet \xi_{2k}$. If $\eta_0< v$, then by Definition~\ref{definition:new_strong} (with $q=p_v$, $m_1=m_2=n+1$), there is some $i\equiv n+1\pmod k$ such that $\xi_i> \eta_0$, whence $\xi_i\meet v\geq \eta_0$. Then by Proposition~\ref{proposition:needed_comb}(2), $\xi_i\meet v=\xi_{n+1-k}\meet v<\xi_{n+1-2k}\meet \xi_{n+1-k}$. Conversely, suppose $\eta_0\not< v$. Then, again by Definition~\ref{definition:new_strong} (having in mind that $\xi_{n+1}\isundefined$), there are distinct $i_1,i_2\equiv n+1\pmod k$ such that $\eta_0\not\leq \xi_{i_1},\xi_{i_2}$, so in particular, $\eta_0 \not\leq \xi_{i_1}\meet \xi_{i_2}$; since $i_1,i_2\leq n$ and $i_1,i_2\equiv n+1\pmod k$, we have $\max(i_1,i_2)\leq n+1-k$, and thus by Proposition~\ref{proposition:needed_comb}\eqref{it:needed_comb_2b}, $\xi_{i_1}\meet \xi_{i_2}\geq \xi_{n+1-k}\meet \xi_{n+1-2k}$, whence $\xi_{n+1-k}\meet \xi_{n+1-2k}\not\geq \eta_0$.
		
		We are left with the case when $\xi$ is a quasi-cycle. Note that by Proposition~\ref{proposition:consequences}\ref{proposition:consequences:orbit_type}, it follows that $(\xi_0,\ldots,\xi_n,v)$ is also a quasi-cycle. Let $u$ be the pseudo-period of $\xi$ (cf.\ Definition~\ref{definition:pseudo-period}). We will show that
		$v>\eta_0$ if and only if there is some positive $k\neq n+1-u$ such that $\xi_k\meet \xi_{n+1-u}\geq \eta_0$.
		
		Suppose $v>\eta_0$. Considering $\xi_{n+1}\isundefined$, by Definition~\ref{definition:new_strong}, there are distinct positive $k, k'\leq n$ such that $\xi_k,\xi_{k'}>\eta_0$. Note that this implies that $v\meet \xi_k\geq \eta_0$, so also $v\meet \xi_{n+1-u}\geq \eta_0$ (by Proposition~\ref{proposition:qc_time_reversal}), and in particular, $\xi_{n+1-u}\geq \eta_0$. Since $k\neq k'$, we may assume without loss of generality that $k\neq n+1-u$, and then clearly $\xi_k\meet \xi_{n+1-u}\geq \eta_0$.
		
		Now, suppose $0<k\neq n+1-u$ and $\xi_k\meet \xi_{n+1-u}\geq \eta_0$. Then by Proposition~\ref{proposition:needed_qc}, $\xi_n\meet \xi_{k-1}=\xi_{n-u}\meet \xi_{k-1}$, so also $v\meet \xi_k=\xi_{n+1-u}\meet \xi_k\geq \eta_0$, and hence $v> \eta_0$.
		
		This finishes the proof in Case~2, completing the proof of Theorem~\ref{theorem:main}.
	\end{proof}

	\begin{corollary}
		\label{corollary:PEC_is_determined}
		A PEC partial automorphism of $\bT$ is determined (in the sense of Definition~\ref{definition:determined_KT}).
	\end{corollary}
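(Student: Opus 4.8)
The plan is to obtain this as an immediate consequence of Lemma~\ref{lemma:sufficient_for_determined} applied with $M=\bT$, using Theorem~\ref{theorem:main} to supply the automorphisms $\tau_n$ demanded by that lemma. So fix a PEC finite partial automorphism $p$ of $\bT$. To verify the hypothesis of Lemma~\ref{lemma:sufficient_for_determined}, I would take an arbitrary immediate chain $p=f_0\subsetneq f_1\subsetneq\dots\subsetneq f_n\subsetneq f_{n+1}$ together with a positively strict extension $g_n\supseteq f_n$, and must produce $\tau_n\in\Aut(\bT)$ fixing $\dom(f_n)$ pointwise with $\tau_n\circ f_{n+1}\circ\tau_n^{-1}\subseteq g_n$.

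First I would note, via Remark~\ref{remark:immediate_extension_stays_pec} and induction along the chain, that every $f_i$ is again PEC; in particular $f_n$ is PEC, which is what lets us invoke Theorem~\ref{theorem:main} for $f_n$. Next I would unwind Definition~\ref{definition:immediate}: the immediate extension has the form $f_{n+1}=f_n\cup\{(\xi_m,b)\}$, where $\xi=(\xi_0,\dots,\xi_m)$ is a non-cyclic orbit of $f_n$ of minimal length (so that $\xi_m\in\range(f_n)\setminus\dom(f_n)$ is the unique element of $\dom(f_{n+1})$ outside $\dom(f_n)$) and $b:=f_{n+1}(\xi_m)$. Since $g_n$ extends $f_n$ and, being positively strict, is an endomorphism of a substructure of $\bT$ that contains $\dom(f_n)$, the element $v:=g_n(\xi_m)$ is a well-defined point of $\bT$ and $f_n\cup\{(\xi_m,v)\}\subseteq g_n$ is a partial automorphism of $\bT$.

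Now I would apply Theorem~\ref{theorem:main} to $f_n$ with the orbit $\xi$, whose minimal-length requirement is exactly what Definition~\ref{definition:immediate} guarantees: both $f_n\cup\{(\xi_m,b)\}=f_{n+1}$ and $f_n\cup\{(\xi_m,v)\}$ are extensions of $f_n$ to partial automorphisms of $\bT$, so the theorem gives $\tp(b/A)=\tp(v/A)$, where $A$ is the meet-tree generated by $\dom(f_n)\cup\range(f_n)$ — a \emph{finite} meet-tree, by the description of finitely generated meet-trees recorded after Fact~\ref{fact:lemmaxyz}. By ultrahomogeneity (equivalently, $\aleph_0$-categoricity) of $\bT$ there is then $\tau_n\in\Aut(\bT)$ with $\tau_n\restriction A=\operatorname{id}_A$ and $\tau_n(b)=v$. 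Since $\dom(f_n),\range(f_n)\subseteq A$ and $\xi_m\in A$, this $\tau_n$ fixes $\dom(f_n)$ pointwise, the domain of $\tau_n\circ f_{n+1}\circ\tau_n^{-1}$ equals $\dom(f_{n+1})$, the conjugate agrees with $f_n\subseteq g_n$ on $\dom(f_n)$, and it sends $\xi_m\mapsto\tau_n(b)=v=g_n(\xi_m)$; hence $\tau_n\circ f_{n+1}\circ\tau_n^{-1}\subseteq g_n$, exactly as required. Lemma~\ref{lemma:sufficient_for_determined} then yields that $p$ is positively determined, hence (by Proposition~\ref{proposition:positively_determined_is_determined}) determined.

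I do not expect a genuine obstacle here: the only real content is Theorem~\ref{theorem:main}, which is already proved, and the rest is routine bookkeeping — reading off the shape of an immediate extension from Definition~\ref{definition:immediate}, checking that the conjugate function lands inside $g_n$, and observing that $A$ is finite. The corollary is essentially a repackaging of Theorem~\ref{theorem:main} through the abstract criterion of Lemma~\ref{lemma:sufficient_for_determined}.
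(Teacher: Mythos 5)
Your proof is correct and follows essentially the same route as the paper's: verify the hypothesis of Lemma~\ref{lemma:sufficient_for_determined} by noting (via Remark~\ref{remark:immediate_extension_stays_pec}) that $f_n$ remains PEC, invoking Theorem~\ref{theorem:main} to equate the types of $f_{n+1}(\xi_m)$ and $g_n(\xi_m)$ over $A$, and then using ultrahomogeneity of $\bT$ to produce $\tau_n$. The extra bookkeeping you spell out (why $A$ is finite, why the conjugate lands inside $g_n$) is elided but implicit in the paper's shorter version.
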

	\begin{proof}
		It is enough to check that every PEC partial automorphism $p$ satisfies the hypothesis of Lemma~\ref{lemma:sufficient_for_determined}. Take $p=f_0\subsetneq f_1\subsetneq\ldots\subsetneq f_n\subsetneq f_{n+1}$ such that $f_i\subsetneq f_{i+1}$ is immediate for $i\leq n$ and a positively strict extension $g\supseteq f_n$. Let $a=\nu_m$ be the sole element of $\dom(f_{n+1})\setminus \dom(f_n)$. Put $v:=f_{n+1}(a), w:=g(a)$. Then by Remark~\ref{remark:immediate_extension_stays_pec}, $f_n$ is PEC, so by Theorem~\ref{theorem:main}, $\tp(v/A)=\tp(w/A)$ (where $A$ is the substructure of $\bT$ generated by the orbits of $f_n$), so, by ultrahomogeneity of $\bT$, there is some $\sigma\in \Aut(\bT/A)$ such that $\sigma(v)=w$. It follows that $\sigma \circ f_{n+1}\circ \sigma^{-1}\subseteq g$, and so $\tau_n:=\sigma$ witnesses that the hypothesis of Lemma~\ref{lemma:sufficient_for_determined} is satisfied, which completes the proof.
	\end{proof}

	\begin{corollary}
		\label{corollary:cap_jep_hp}
		The class $\cK^1$ of finite meet-trees with a single partial automorphism has CAP and JEP.
	\end{corollary}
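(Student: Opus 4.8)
The plan is to get CAP essentially for free from the machinery of this section, and to dispatch JEP by a direct construction. Recall that CAP for $\cK^1$ (with $\cK$ the class of finite meet-trees) amounts to saying that the amalgamation bases are cofinal in $\cK^1$. So fix an arbitrary $(A,p)\in\cK^1$. Since $\cK=\Age(\bT)$, I may regard $A$ as a finite substructure of $\bT$. Using ultrahomogeneity of $\bT$, extend $p$ to some $\hat p\in\Aut(\bT)$ and set $q_0:=\hat p\restriction A$; then $q_0$ is a finite partial automorphism of $\bT$ with $p\subseteq q_0$ and $A\subseteq\dom(q_0)$. By Proposition~\ref{proposition:PEC_extensions}, $q_0$ extends to a finite PEC partial automorphism $q$ of $\bT$, and by Corollary~\ref{corollary:PEC_is_determined}, $q$ is determined. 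Let $B\subseteq\bT$ be the (finite) meet-tree generated by $\dom(q)\cup\range(q)$. Then $A\subseteq\dom(q_0)\subseteq B$ and $q\restriction\dom(p)=p$, so the inclusion is an embedding $(A,p)\hookrightarrow(B,q)$ in $\cK^1$; and by Corollary~\ref{corollary:determined_is_AB_trees}, $(B,q)$ is an amalgamation base in $\cK^1$. Since $(A,p)$ was arbitrary, the amalgamation bases are cofinal, i.e.\ $\cK^1$ has CAP.

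For JEP, given $(A_1,p_1),(A_2,p_2)\in\cK^1$, I would take disjoint copies of $A_1,A_2$ and a fresh point $r$, and put on $B:=A_1\sqcup\{r\}\sqcup A_2$ the meet-tree structure that restricts to the given ones on $A_1$ and on $A_2$, makes $r$ the minimum, declares every element of $A_1$ incomparable to every element of $A_2$, and sets $a_1\meet a_2=r$ for $a_i\in A_i$; this is readily checked to be a finite meet-tree. Put $q:=p_1\cup p_2\cup\{(r,r)\}$. Its domain $\dom(p_1)\cup\dom(p_2)\cup\{r\}$ has the property that every ``mixed'' meet lands on $r$, and $q$ preserves $\leq$ and $\meet$ because the restrictions $p_1,p_2$ do and all cross-meets are sent to $r=q(r)$; hence $q$ is a partial automorphism of $B$, and the two inclusions give embeddings $(A_i,p_i)\hookrightarrow(B,q)$, establishing JEP.

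All of this is bookkeeping: the real content is packaged in the results invoked for CAP --- above all Proposition~\ref{proposition:PEC_extensions} (every finite partial automorphism has a PEC extension) and Corollary~\ref{corollary:PEC_is_determined} (PEC implies determined, which rests on Theorem~\ref{theorem:main}), together with Corollary~\ref{corollary:determined_is_AB_trees}. The only point one has to be mildly careful about is that $B$ must be \emph{generated} by $\dom(q)\cup\range(q)$ for Corollary~\ref{corollary:determined_is_AB_trees} to apply, which is why I first pass through $q_0$ chosen so that $A\subseteq\dom(q_0)$ rather than trying to take $B\supseteq A$ directly; with that arranged there is no remaining obstacle.
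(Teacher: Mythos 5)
Your proof is correct and matches the paper's argument essentially verbatim: CAP is obtained by passing through a PEC extension (Proposition~\ref{proposition:PEC_extensions}), invoking Corollary~\ref{corollary:PEC_is_determined} and then Corollary~\ref{corollary:determined_is_AB_trees}, and JEP is obtained by wedging the two structures at a fresh minimum. The only (harmless) cosmetic deviation is that you add the pair $(r,r)$ to the joint partial automorphism in the JEP construction, whereas the paper leaves the new root outside the domain.
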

	\begin{proof}
		Given any $(B',p')\in \cK^1$, we may assume that $B'\subseteq \bT$, and then extend $p'$ to a finite partial automorphism $p''$ of $\bT$ such that $B'\subseteq \dom(p'')$. By Proposition~\ref{proposition:PEC_extensions}, we can extend $p''$ to a PEC partial automorphism $p$ of $\bT$. If we take $B$ to  be the tree generated by $\dom(p)\cup \range(p)$, then by Corollary~\ref{corollary:PEC_is_determined} and Corollary~\ref{corollary:determined_is_AB_trees}, if $p$ is a PEC automorphism of $\bT$, then $(B,p)$ is an amalgamation base in $\cK^1$, and clearly $(B',p')\subseteq(B,p)$, which shows CAP.
		
		To see JEP, take any $(A,p_A),(B,p_B)\in \cK^1$. We may assume without loss of generality that $A\cap B=\emptyset$. Let $v$ be a new element. Then let $A\vee_v B:=A\cup B\cup \{v\}$, ordered in such a way $a\meet b=v$ for all $a\in A$ and $b\in B$. Clearly, $p_A\cup p_B$ is a partial automorphism of $A\vee_v B$ and $(A,p_A)$ and $(B,p_B)$ embed into $(A\vee_v B,p_A\cup p_B)$.
	\end{proof}
	
	The following theorem, along with Corollary~\ref{corollary:no_generic_pair}, completes the \hyperref[mainthm]{Main Theorem}.
	\begin{theorem}
		\label{theorem:gen_automorphism_exists}
		The universal meet-tree $\bT$ has a generic automorphism.
	\end{theorem}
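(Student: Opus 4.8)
The plan is to simply assemble the pieces already in place. By Fact~\ref{fact:generic_trees}, the class $\cK$ of finite meet-trees is a Fraïssé class whose limit is $\bT$, so $\bT$ is $\aleph_0$-categorical and ultrahomogeneous. By Corollary~\ref{corollary:sufficient_for_generics} (the $n=1$ case, which is essentially Truss's criterion \cite[Theorem 2.1]{Tru92}), it suffices to check that the class $\cK^1$ of finite meet-trees equipped with a single partial automorphism has JEP and CAP. Both of these are exactly the content of Corollary~\ref{corollary:cap_jep_hp}, so the theorem follows at once.

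More explicitly, I would spell out the reduction: Fact~\ref{fact:amalgam_and_generics} tells us that $\Aut(\bT)$ has a generic automorphism if and only if $\cK^1$ has JEP and EAP, and by Remark~\ref{remark:cap_eap}, CAP implies EAP; hence CAP $+$ JEP for $\cK^1$ is enough. Corollary~\ref{corollary:cap_jep_hp} provides precisely this, so $\bT$ has a generic automorphism.

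The genuine work, of course, all lies upstream and is already done in the paper: establishing CAP for $\cK^1$ required (i) showing that determined finite partial automorphisms of $\bT$ yield amalgamation bases in $\cK^1$ (Corollary~\ref{corollary:determined_is_AB_trees}, which rests on the amalgamation property for meet-trees with an automorphism, Lemma~\ref{lemma:AP_in_trees_with_automorphisms}, together with the abstract Lemma~\ref{lemma:determined_is_amalgamation_base}); (ii) constructing, for every finite partial automorphism, a PEC extension (Proposition~\ref{proposition:PEC_extensions}); and (iii) proving that PEC partial automorphisms are determined (Corollary~\ref{corollary:PEC_is_determined}), which is where the main obstacle—Theorem~\ref{theorem:main} and its supporting analysis of orbit types in Section~\ref{section:Orbits in meet-trees}—was overcome. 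Relative to all of that, the proof of Theorem~\ref{theorem:gen_automorphism_exists} itself is a one-line bookkeeping argument, and I would present it as such.
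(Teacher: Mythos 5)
Your proof is correct and is essentially identical to the paper's: the paper proves Theorem~\ref{theorem:gen_automorphism_exists} by citing exactly Corollary~\ref{corollary:cap_jep_hp} and Corollary~\ref{corollary:sufficient_for_generics}, the same one-line assembly you give. Your added commentary unpacking the reduction through Fact~\ref{fact:amalgam_and_generics} and Remark~\ref{remark:cap_eap}, and summarizing the upstream ingredients, is accurate but not a different route.
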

	\begin{proof}
		This is immediate by Corollary~\ref{corollary:cap_jep_hp} and Fact~\ref{corollary:sufficient_for_generics}.
	\end{proof}
	
	\begin{remark}
		\label{remark:works_for_bounded_arity}
		It seems like it might be possible to use largely the same methods to show that for each $k>0$, $\bT_k$ has a generic automorphism. More specifically, we believe that the analogue of Theorem~\ref{theorem:main} is true with essentially the same proof (after we replace the notion of PEC by the appropriate variant for $\bT_k$), which will yield a cofinal class of \emph{determined} automorphisms of $\bT_k$. On the other hand, Proposition~\ref{proposition:no_AP_bdd} shows that the analogue of Lemma~\ref{lemma:AP_in_trees_with_automorphisms} fails if $k>1$, so we cannot simply use Lemma~\ref{lemma:determined_is_amalgamation_base} to conclude that the class of determined automorphisms witnesses CAP, as we did in the proof of Corollary~\ref{corollary:cap_jep_hp}.
		
		Instead, one could try to show that every finite partial automorphism of $\bT_k$ can be extended to one which is PEC (in $\bT_k$) and has the property that its unique strict extension satisfies the hypothesis of Remark~\ref{remark:sufficient_for_AP_bdd}, which (together with Lemma~\ref{lemma:determined_is_amalgamation_base} and a proof of Remark~\ref{remark:sufficient_for_AP_bdd}) would yield CAP. For $k=2$, this seems straightforward, but the general case appears to be more difficult --- it is plausible that for $k>2$ we might have EAP but not CAP, leading to a negative answer to Question~\ref{question:EAP_not_CAP}.
		
		Note that in any case we easily get JEP for the class of finite $k$-ary meet-trees for all $ k $: for $ k>1 $ it is exactly as in the proof of Corollary \ref{corollary:cap_jep_hp}, and for $ k=1 $ (i.e., linear orders) it is also quite easy. Thus, for all $ k $, the automorphism group of $ \bT_k $ has a dense conjugacy class. 
		\xqed{\lozenge}
	\end{remark}
	
	\begin{remark}
		\label{remark:works_for_linear}
		The caveats mentioned in Remark~\ref{remark:works_for_bounded_arity} do not seem to apply in the case of $k=1$ (see Remark~\ref{remark:AP_linear_aut}), so we can slightly adjust the proof of Theorem~\ref{theorem:gen_automorphism_exists} to recover the fact that $(\mathbf Q,<)$ has a generic automorphism.\xqed{\lozenge}
	\end{remark}
	
	\begin{remark}
		\label{remark:works_for_lex_trees}
		Consider the class of finite meet-trees expanded with a lexicographic ordering (i.e.\ a total order $\unlhd$, extending the tree order, such that if $b\unlhd a$ and $a\meet a'>a\meet b$, then $b\unlhd a'$). It is easy to see that it is a Fraïssé class, and one can ask whether the limit of this class has a generic automorphism.
		
		The proof of Lemma~\ref{lemma:AP_in_trees_with_automorphisms} seems to adapt to this context in a straightforward manner, so the main difficulty seems to lie with the analogue of Theorem~\ref{theorem:main}.
		
		Note that the orbit analysis for trees with a lexicographic ordering is much simpler: there are no nontrivial cycles (there can be fixed points), which implies that the spiral length can only be equal to $1$ (so the only spirals are just monotone sequences, the only spiral combs are $1$-combs), and the pseudo-period of a quasi-cycle is always one (so all quasi-cycles are ``fans'').\xqed{\lozenge}
	\end{remark}

	\printbibliography

@article{Tru92,
	author = {Truss, John K.},
	title = {Generic Automorphisms of Homogeneous Structures},
	journal = {P. Am. Math. Soc.},
	volume = {s3-65},
	number = {1},
	pages = {121--141},
	doi = {10.1112/plms/s3-65.1.121},
	year = {1992},
}

@article{Kwi12,
	author = {Kwiatkowska, Aleksandra},
	title = {The group of homeomorphisms of the Cantor set has ample generics},
	journal = {Bulletin of the London Mathematical Society},
	volume = {44},
	number = {6},
	pages = {1132--1146},
	doi = {10.1112/blms/bds039},
	year = {2012},
}

@article{KR07,
	author = {Kechris, Alexander S. and Rosendal, Christian},
	title = {Turbulence, amalgamation, and generic automorphisms of homogeneous structures},
	journal = {P. Lond. Math. Soc.},
	volume = {94},
	number = {2},
	pages = {302--350},
	doi = {10.1112/plms/pdl007},
	year = {2007}
}

@phdthesis{Sin17,
	author = {Siniora, Daoud N.},
	title = {Automorphism Groups of Homogeneous Structures},
	year = {2017},
	school = {University of Leeds},
}

@book{Hod,
	author = {Hodges, Wilfrid},
	title = {A Shorter Model Theory},
	year = {1997},
	isbn = {0-521-58713-1},
	publisher = {Cambridge University Press},
	address = {New York, NY, USA},
}

@article{KM19,
	author = {{Kwiatkowska}, Aleksandra and {Malicki}, Maciej},
	title = {Ordered structures and large conjugacy classes},
	journal = {Journal of Algebra},
	volume = {557},
	pages = {67--96},
	year = {2020},
	issn = {0021-8693},
	doi = {10.1016/j.jalgebra.2020.03.021},
}

@article{Duc19,
	author = {Bruno Duchesne},
	title = {Topological properties of Wa\.zewski~dendrite groups},
	journal = {Journal de l'\'Ecole polytechnique --- Math\'ematiques},
	pages = {431--477},
	publisher = {\'Ecole polytechnique},
	volume = {7},
	year = {2020},
	doi = {10.5802/jep.121},
	zbl = {07179025},
	url = {jep.centre-mersenne.org/item/JEP_2020__7__431_0/}
}

@ARTICLE{KT01,
	author = {Kuske, Dietrich and Truss, John K.},
	title = {Generic Automorphisms of the Universal Partial Order},
	journal = {P. Am. Math. Soc.},
	year = {2001},
	volume = {129},
	number = {7},
	pages = {1939--1948},
	doi = {10.1090/S0002-9939-00-05778-6},
}

@article{BBPP18,
	author = {Bodirsky, Manuel and Bradley-Williams, David and Pinsker, Michael and Pongrácz, András},
	title = {The universal homogeneous binary tree},
	journal = {J. Logic Comput.},
	volume = {28},
	number = {1},
	pages = {133--163},
	year = {2018},
	month = {02},
	issn = {0955-792X},
	doi = {10.1093/logcom/exx043},
}

@article {MacSurvey,
	AUTHOR = {Macpherson, Dugald},
	TITLE = {A survey of homogeneous structures},
	JOURNAL = {Discrete Math.},
	FJOURNAL = {Discrete Mathematics},
	VOLUME = {311},
	YEAR = {2011},
	NUMBER = {15},
	PAGES = {1599--1634},
	ISSN = {0012-365X},
	MRCLASS = {03C15 (03E15 05D10 20B27 22F50 68Q25)},
	MRNUMBER = {2800979},
	MRREVIEWER = {G. Cherlin},
	DOI = {10.1016/j.disc.2011.01.024},
	URL = {https://doi.org/10.1016/j.disc.2011.01.024},
}

@article {TrussNoGenericPair,
	AUTHOR = {Truss, John K.},
	TITLE = {On notions of genericity and mutual genericity},
	JOURNAL = {J. Symbolic Logic},
	FJOURNAL = {The Journal of Symbolic Logic},
	VOLUME = {72},
	YEAR = {2007},
	NUMBER = {3},
	PAGES = {755--766},
	ISSN = {0022-4812},
	MRCLASS = {20B27 (03C15 05C80 06A99)},
	MRNUMBER = {2354899},
	MRREVIEWER = {Gerhard Behrendt},
	DOI = {10.2178/jsl/1191333840},
	URL = {https://doi.org/10.2178/jsl/1191333840},
}

@article {946,
	AUTHOR = {Kaplan, Itay and Shelah, Saharon},
	TITLE = {Examples in dependent theories},
	JOURNAL = {J. Symbolic Logic},
	FJOURNAL = {The Journal of Symbolic Logic},
	VOLUME = {79},
	YEAR = {2014},
	NUMBER = {2},
	PAGES = {585--619},
	ISSN = {0022-4812},
	MRCLASS = {03C45 (03C64 03C95)},
	MRNUMBER = {3224981},
	MRREVIEWER = {Artem Chernikov},
	DOI = {10.1017/jsl.2013.11},
	URL = {https://doi.org/10.1017/jsl.2013.11},
}

@article {975,
	AUTHOR = {Kaplan, Itay and Shelah, Saharon},
	TITLE = {A dependent theory with few indiscernibles},
	JOURNAL = {Israel J. Math.},
	FJOURNAL = {Israel Journal of Mathematics},
	VOLUME = {202},
	YEAR = {2014},
	NUMBER = {1},
	PAGES = {59--103},
	ISSN = {0021-2172},
	MRCLASS = {03C45 (03C55 03E02)},
	MRNUMBER = {3265314},
	MRREVIEWER = {Klaas Pieter Hart},
	DOI = {10.1007/s11856-014-1067-2},
	URL = {https://doi.org/10.1007/s11856-014-1067-2},
}

@book {SimGuide,
	AUTHOR = {Simon, Pierre},
	TITLE = {A guide to {NIP} theories},
	SERIES = {Lecture Notes in Logic},
	VOLUME = {44},
	PUBLISHER = {Association for Symbolic Logic, Chicago, IL; Cambridge
	Scientific Publishers, Cambridge},
	YEAR = {2015},
	PAGES = {vii+156},
	ISBN = {978-1-107-05775-3},
	MRCLASS = {03-02 (03C45 03C60 03C64 68Q32)},
	MRNUMBER = {3560428},
	MRREVIEWER = {Alf Onshuus},
	DOI = {10.1017/CBO9781107415133},
	URL = {https://doi.org/10.1017/CBO9781107415133},
}

@article {Ivanov,
	AUTHOR = {Ivanov, Aleksander A.},
	TITLE = {Generic expansions of {$\omega$}-categorical structures and
	semantics of generalized quantifiers},
	JOURNAL = {J. Symbolic Logic},
	FJOURNAL = {The Journal of Symbolic Logic},
	VOLUME = {64},
	YEAR = {1999},
	NUMBER = {2},
	PAGES = {775--789},
	ISSN = {0022-4812},
	MRCLASS = {03C80 (03C15 03C25 03C50)},
	MRNUMBER = {1777786},
	DOI = {10.2307/2586500},
	URL = {https://doi.org/10.2307/2586500},
}

@article {KPR,
	AUTHOR = {Kechris, Alexander S. and Pestov, Vladimir G. and Todorčević, Stevo},
	TITLE = {Fra\"{i}ss\'{e} limits, {R}amsey theory, and topological dynamics of
	automorphism groups},
	JOURNAL = {Geom. Funct. Anal.},
	FJOURNAL = {Geometric and Functional Analysis},
	VOLUME = {15},
	YEAR = {2005},
	NUMBER = {1},
	PAGES = {106--189},
	ISSN = {1016-443X},
	MRCLASS = {37B05 (03C15 03E02 03E15 05D10 22F50 43A07 54H20)},
	MRNUMBER = {2140630},
	MRREVIEWER = {Eli Glasner},
	DOI = {10.1007/s00039-005-0503-1},
	URL = {https://doi.org/10.1007/s00039-005-0503-1},
}

@online{PierreNIPomegacategoricalrank1,
	Author = {Simon, Pierre},
	Title = {NIP omega-categorical structures: the rank 1 case},
	Year = {2018},
	eprinttype = {arxiv},
	Eprint = {1807.07102},
}

@article{DHM89,
	author = {Droste, Manfred and Holland, W. Charles and Macpherson, H. Dugald},
	title = {Automorphism Groups of Infinite Semilinear Orders (II)},
	journal = {Proceedings of the London Mathematical Society},
	volume = {s3-58},
	number = {3},
	pages = {479--494},
	doi = {10.1112/plms/s3-58.3.479},
	year = {1989}
}
\end{document}